\title[$K_0$ OF SEMIARTINIAN VON NEUMANN
REGULAR RINGS]
{$\mathbf{K}_0$ OF SEMIARTINIAN VON NEUMANN
REGULAR RINGS. DIRECT FINITENESS VERSUS UNIT-REGULARITY. }
\date{July 09, 2016; 16,48}
\newtheorem{theo}{Theorem}[section]
\newtheorem{pro}[theo]{Proposition}
\newtheorem{prodef}[theo]{Proposition and Definition}
\newtheorem{lem}[theo]{Lemma}
\newtheorem{cor}[theo]{Corollary}
\theoremstyle{remark}
\newtheorem{re}[theo]{Remark}
\newcommand{\putv}[5]{\put(#1,#2){\vector(#3,#4){#5}}}
\newcommand{\putbb}[3]{\put(#1,#2){\makebox(0,0)[b]{#3}}}
\newcommand{\CH}{\mathcal{H}}
\newcommand{\CV}{\mathcal{V}}
\newcommand{\BL}{\mathbb L}
\newcommand{\BN}{\mathbb N}
\newcommand{\BZ}{\mathbb Z}
\newcommand{\ZA}{\mathbf{A}}
\newcommand{\ZB}{\mathbf{B}}
\newcommand{\ZE}{\mathbf{E}}
\newcommand{\ZF}{\mathbf{F}}
\newcommand{\ZK}{\mathbf{K}}
\newcommand{\ZP}{\mathbf{P}}
\newcommand{\ZS}{\mathbf{S}}
\newcommand{\zi}{\mathbf{i}}
\newcommand{\zm}{\mathbf{m}}
\newcommand{\zo}{\mathbf{o}}
\newcommand{\zp}{\mathbf{p}}
\newcommand{\zr}{\mathbf{r}}
\newcommand{\zs}{\mathbf{s}}
\newcommand{\zu}{\mathbf{u}}
\renewcommand{\a}{\alpha}
\renewcommand{\b}{\beta}
\newcommand{\g}{\gamma}
\renewcommand{\l}{\lambda}
\newcommand{\f}{\varphi}
\newcommand{\G}{\varGamma}
\newcommand{\D}{\varDelta}
\renewcommand{\L}{\varLambda}
\renewcommand{\Xi}{\varXi}
\renewcommand{\Pi}{\varPi}
\renewcommand{\Sigma}{\varSigma}
\newcommand{\F}{\varPhi}
\renewcommand{\P}{\varPsi}
\newcommand{\nin}{\not\in} %non appartenente a
\newcommand{\sbs}{\subset} %contenuto
\newcommand{\psbs}{\subsetneqq} %propriamente contenuto
\newcommand{\nsbs}{\not\subset} %non contenuto
\newcommand{\sps}{\supset} %contiene
\newcommand{\is}{\simeq} %isomorfo
\newcommand{\subis}{\lesssim} %subisomorfo
\renewcommand{\le}{\leqslant} %minore o uguale
\renewcommand{\ge}{\geqslant} %maggiore o uguale
\DeclareMathOperator{\Ker}{Ker}
\DeclareMathOperator{\Coker}{Coker}
\DeclareMathOperator{\Imm}{Im}
\DeclareMathOperator{\End}{End}
\DeclareMathOperator{\Supp}{Supp}
\DeclareMathOperator{\Soc}{Soc}
\DeclareMathOperator{\Tr}{Tr}
\DeclareMathOperator{\LL}{L}
\newcommand{\defug}{:\,=}
\newcommand{\FP}{\ZF\ZP}
\newcommand{\ESFP}{\ZE\ZS\ZF\ZP}
\newcommand{\KO}{\ZK_0}
\newcommand{\Simp}{{\ZS\zi\zm\zp}}
\newcommand{\Prosimp}{{\ZP\zr\zo\zs\zi\zm\zp}}
\newcommand{\Prim}{{\ZP\zr\zi\zm}}
\newcommand{\map}[3]{#1\colon #2\to#3}
\newcommand{\lmap}[3]{#1\colon#2\longrightarrow#3}
\numberwithin{equation}{section}
\begin{document}

\author{Giuseppe Baccella and Leonardo Spinosa}

\address{Dipartimento di Ingegneria e scienze dell'informazione e matematica\newline
Universit\`a di L'Aquila, Via Vetoio, 67100 L'Aquila, Italy}
\email{baccella@univaq.it}

\address{University of Ferrara, Department of Mathematics and Computer Science\newline
Via Machiavelli 30, Ferrara, I-44121, Italy}
\email{leonardo.spinosa@unife.it}

\begin{abstract}
        If $R$ is a regular and semiartinian ring, it is proved that the following conditions are equivalent: (1) $R$ is unit-regular, (2) every factor ring of $R$ is directly finite, (3) the abelian group $\KO(R)$ is free and admits a basis which is in a canonical one to one correspondence with a set of representatives of simple right $R$-modules. For the class of semiartinian and unit-regular rings the canonical partial order of $\KO(R)$ is investigated. Starting from any partially ordered set $I$, a special dimension group $G(I)$ is built and a large class of semiartinian and unit-regular rings is shown to have the corresponding $\KO(R)$ order isomorphic to $G(\Prim_{R})$, where $\Prim_{R}$ is the primitive spectrum of $R$. Conversely, if $I$ is an artinian partially ordered set having a finite cofinal subset, it is proved that the dimension group $G(I)$ is realizable  as $\KO(R)$ for a suitable semiartinian and unit-regular ring $R$.
\end{abstract}

\maketitle

\setcounter{section}{-1}
\section{Introduction}

The class of semiartinian, Von Neumann regular rings (some authors prefer to speak of Loewy rings, instead of semiartinian rings) turns out to be a very large one. As it was remarked in \cite{Bac:12} more than twenty years ago, it includes several of the most important examples of rings that, over the years, were found in order to detect, display and understand uneven and ``pathological'' behaviors in the theory of regular rings. More recently, two new constructions have definitely shown that the class of semiartinian and regular rings is large and complex enough to deserve more interest and investigations. Concerning the first one, we first recall that one basic property of a semiartinian and regular ring $R$ (firstly remarked by Camillo and Fuller in \cite{CamilloFuller:2}) is that its primitive spectrum $\Prim_R$ (namely the set of primitive ideals ordered by inclusion) is artinian, i. e. satisfies the minimum condition; often it has a finite cofinal subset too. Conversely, it has been shown in \cite{Bac:16} that, given an artinian partially ordered set I and a field $D$, there exists a semiartinian and unit-regular $D$-algebra $D_I$, together with an injective and order preserving map from $I$ to $\Prim_R$; that map is an isomorphism if and only if $I$ has a finite cofinal subset, otherwise the image of this map leaves apart exactly one primitive ideal. In addition $D_I$ turns out to be a right $V$-ring, and it is also a left $V$-ring if and only if $I$ is an antichain. The second construction was given by Abrams, Rangaswamy and M. Siles Molina in \cite{AbrRangSMol:1}, where they characterized those graphs $E$ whose Leavitt path algebra $L_K(E)$ is semiartinian, in which the case it is unit-regular as well; they showed that, given any ordinal $\xi$ and a field $K$, there exists a graph $E$ such that $L_K(E)$ is semiartinian with Loewy length $\xi+1$.

In the present work we resume the study of the Grothendieck group $\KO(R)$ of a semiartinian and regular ring $R$, which was started in \cite{BacCiamp:14}, where it was shown that when $R$ is in addition unit-regular, then $\KO(R)$ is free with a basis which is in one to one correspondence with a set of representatives of simple right $R$-modules. With the main result of the first section we invert the above statement. To be more precise, assume that $R$ is a semiartinian and regular ring, with Loewy length $\xi+1$ and Loewy chain of ideals $(L_{\a})_{\a\le\xi+1}$. If $A$ is a finitely generated projective right $R$-module, then there is a smallest nonzero (successor) ordinal $h(A)\le\xi+1$ such that $A = AL_{h(A)}$; say that $A$ is \emph{eventually simple} in case $A/AL_{h(A)-1}$ is simple. Then there exists a \emph{complete and irredundant} set $\ZA$ of eventually simple finitely generated projective right $R$-modules, in the sense that for every simple right $R$-module $U$ there is a unique $A\in\ZA$ such that $U\is A/AL_{h(A)-1}$. By denoting with $[A]$ the stable isomorphism class of $A$, it was shown in \cite{BacCiamp:14} that the set $[\ZA] = \{[A] \mid A\in\ZA\}$ always generates the group $\KO(R)$ and, if $R$ is unit-regular, then $\KO(R)$ is free with  $[\ZA]$ as a basis; in the latter case, let us say that $[\ZA]$ is a \emph{simp-basis} for $\KO(R)$. The Main Theorem (Theorem \ref{theo main}) of the first section of the present work states that the following three conditions are equivalent for a semiartinian and regular ring $R$ (Theorem \ref{theo main}): (1) $R$ is unit-regular, (2) every factor ring of $R$ is directly finite, (3) $\KO(R)$ is free and admits a simp-basis. Thus our result gives also a contribution to the solution of Problem 3 of Goodearl book \cite{Good:3}, where one asks if, given a regular ring $R$, the condition of direct finiteness for all factor rings of $R$ entails unit-regularity of $R$: the answer is ``yes'' when $R$ is semiartinian.

In the second section we investigate about the possible relationship between the partial order (by inclusion) of the primitive spectrum $\Prim_R$ of a semiartinian and regular ring $R$, the algebraic pre-order of the additive monoid $\CV(R)$ of isomorphism classes of finitely generated and projective right $R$-modules, and the canonical pre-order of the group $\KO(R)$. The starting point is the observation that if $\Simp_R$ is a set of representatives of simple right $R$-modules, then the assignment $U\mapsto r_R(U)$ defines a bijection from $\Simp_R$ to $\Prim_R$; consequently we may define a partial order $\preccurlyeq$ in $\Simp_R$ by declaring that $U\preccurlyeq V$ if and only if $r_R(U)\sbs r_R(V)$ (the first author had defined in \cite{Bac:15} a natural partial order in $\Simp_R$ for a general semiartinian ring $R$; in case $R$ is regular, that order becomes the one we have just defined). If $A$ and $B$ are members of a complete and irredundant set $\ZA$ of eventually simple and finitely generated and projective right $R$-modules, by considering the simple modules $U= A/AL_{h(A)-1}$ and $V= B/BL_{h(B)-1}$ it is true that $A\subis B$ implies $U\preccurlyeq V$ but, in general, the converse may fail. We say that the set $\ZA$ is \emph{order representative} in case the converse holds. If such a set $\ZA$ exists, then the set $\{\overline{A}\mid A\in\ZA\}$ is a generating subset of $\CV(R)$ which is order isomorphic to $\Prim_R$. The main result of this section concerns the obvious question: which semiartinian and regular rings admit such an order representative set? First note that if $H$ is an ideal of $R$, then $\Simp_{R/H}$ is an upper subset of $\Simp_R$ and the assignment $H\mapsto \Simp_{R/H}$ defines an injective and order-reversing map $\F$ from the lattice $\BL_{2}(R)$ of ideals of $R$ to the lattice $\Uparrow
\!\!\Simp_{R}$ of upper subsets of $\Simp_R$, this latter endowed with the above partial order. The map $\F$ need not be onto; the ring $R$ was said in \cite{Bac:16} to be \emph{very well behaved} in case $\F$ is onto. Here we show that $R$ admits an order representative set of eventually simple and finitely generated and projective right $R$-modules if and only if $R$ is very well behaved.

In the third section, which concludes our work, we analize the order structure of $\KO(R)$, when $R$ is a semiartinian, unit-regular and very well behaved ring. First, to every partially ordered set $I$ we associate the partially ordered and directed abelian group $G(I) = (\BZ^{(I)}, M(I))$, where $\BZ^{(I)}$ is the usual free abelian group with basis $I$ and $M(I)$ is the submonoid whose nonzero elements are all (finite) linear combinations $a_{1}i_{1}+\cdots+a_{r}i_{r}$ of elements of $I$, with coefficients in $\BZ$, where $a_{\a}>0$ in case $i_{\a}$ is maximal in $\{i_{1},\ldots,i_{r}\}$. Then it turns out that $G(I)$ is a dimension group (i. e. it is directed, unperforated and satisfies the Riesz decomposition property), which has an order unit if and only if $I$ has a finite cofinal subset $F$, in which the case the sum $u$ (in $G(I)$) of all elements of $F$ is an order unit for $G(I)$. Then we prove that there is a natural order isomorphism from $G(\Prim_R)$ to $\KO(R)$, which restricts to an isomorphism from $M(\Prim_R)$ to $\CV(R)$. As outlined at the beginning of this introduction, every artinian, partially ordered set $I$ with a finite cofinal subset is realizable as $\Prim_R$ for some semiartinian, unit-regular and very well behaved ring $R$. Thus our result gives a partial answer to the Problem 29 in \cite{Good:3}, where it is asked which directed abelian groups with order-unit are isomorphic to $(\KO(R),[R])$ for some regular (or unit-regular) ring $R$. The same result gives also a partial solution to the \emph{Realization Problem for Von Neumann regular rings}, which asks what are the conical refinement monoids which are isomorphic to $\CV(R)$ for some regular ring $R$ (see \cite{Ara:011}, \cite{AraGood:10}): those of the form $M(I)$, where $I$ is an artinian partially ordered set with a finite cofinal subset, are among them and they are tame in the sense of \cite{AraGood:10}).

Concerning notations, we follow the (by now) universally established practice in rings and modules theory. However, we use the symbol $A\sbs B$ with the meaning ``$A$ is a subset of $B$'' and write $A\psbs B$ to indicate that ``$A$ is a proper subset of $B$''. Finally, we always consider the number $0$ as a member of the set $\BN$ of natural numbers.

\section{Direct finiteness versus unit-regularity.}

Given a ring $R$, we denote with $\FP(R_R)$ the class of finitely generated and projective right $R$-modules and, for every $A\in\FP(R_R)$, we denote with $\overline{A}$ the class of all elements of $\FP(R_R)$ which are isomorphic to $A$. Then the \emph{set} $\CV(R)\defug\{\overline{A}\mid A\in\FP(R_R)\}$ has a canonical structure of an abelian monoid given by the addition defined by the rule
\[
\overline{A} +\overline{B} \defug \overline{A\oplus B}.
\]
The \emph{algebraic preorder} in $\CV(R)$ is the reflexive and transitive relation $\leqslant$ which prescribes that, given $\overline{A},\overline{B}\in\CV(R)$, one has $\overline{A}\leqslant\overline{B}$ if and only if $B\is A\oplus C$ for some $C\in\FP(R_R)$. Two modules $A,B\in\FP(R_R)$ are said to be \textbf{stably isomorphic} (write $[A] = [B]$) if there is some positive integer $n$ such that $A\oplus R^n \is B\oplus R^n$. Stable isomorphism is obviously an equivalence relation in the class $\FP(R_R)$ and, by denoting with $[A]$ the equivalence class of a given $A\in\FP(R_R)$, the factor class $S = \{[A]\mid A\in\FP(R_R)\}$ is a set, given that $\overline{A}\sbs[A]$. $S$ is an abelian monoid with respect to the natural operation $[A]+[B] \defug [A\oplus B]$ and every element of $S$ is cancellable. The Grothendieck group $\KO(R)$ of $R$ can be defined as the Grothendieck group of the monoid $S$; formally:
\[
\KO(R) = \{[A]-[B]\mid A,B\in\FP(R_R)\},
\]
where $[A]-[B] = [C]-[D]$ if and only if $[A]+[D] = [B]+[C]$. If $\ZA$ is a subset of $\FP_{R}$, we shall denote by $[\ZA]$ the subset $\{[A]\mid A\in\ZA\}$ of $\KO(R)$.

If $R$ is a unit-regular ring, since all finitely generated projective right $R$-modules are cancellable from direct sums, we have that $\overline{A} = [A]$ for every $A\in\FP(R_R)$, so that $\CV(R)$ coincides with the above monoid $S$ and the algebraic preorder is a partial order.

Unless otherwise specified, in what follows $R$ denotes a regular and semiartinian ring with Loewy length $\xi+1$ and we set $L_{\a} \defug \Soc_{\a}(R_R)$ for every ordinal $\a$ (we shall keep consistently this notation throughout the present paper). If $M$ is a right $R$-module, we define the ordinal $h(M) = \min\{\,\a\leqslant \LL(R_R)\mid ML_{\a} = M\}$; clearly $h(M)$ is a successor ordinal if $M$ is finitely generated. If $x\in R$, then it is easy to see that
\[
h(xR) = \min\{\,\a\leqslant\LL(R_R)\mid x\in L_{\a}\};
\]
we write $h(x)$ for $h(xR)$. If $A$ is a finitely generated projective $R$-module, then for every ordinal $\a$ the equality
\[
AL_{\a} = \Soc_{\a}(A)
\]
holds (see \cite[Proposition 1.5]{Bac:15}), so that $h(A)$ is precisely the Loewy length of $A$.

If $\ZA$ is any class of right $R$-modules, for every ordinal $\a$ we consider the subclass
\[
\ZA_{\a} \defug \{A\in\ZA\mid h(A) = \a+1\}.
\]
As it was observed in \cite{BacCiamp:14}, for every ordinal $\a\le\xi$ we can choose a set $E_{\a}$ of idempotents of $L_{\a+1}\setminus L_{\a}$ in such a way that $\{eR/eL_{\a}\mid e\in E_{\a}\}$ is an irredundant set of representatives of all simple and projective right $R/L_{a}$-modules and
\[
\bigcup_{\a\le\xi}\{eR/eL_{\a}\mid e\in E_{\a}\} = \Simp_{R}.
\]

If $A$ is a finitely generated and projective right $R$-module, then $A/AL_{h(A)-1}$ is semisimple; let us say that $A$ is \emph{eventually simple} in case $A/AL_{h(A)-1}$ is simple and let us define class
\[
\ESFP_{R} \defug \{A\in\FP_{R}\mid\text{$A$ is eventually simple}\}.
\]
We say that a subset $\ZA\sbs\ESFP_{R}$ is \emph{complete and irredundant} if for every simple right $R$-module $U$ there is a \underbar{unique} $A\in\ZA$ such that $A/AL_{h(A)-1}\is U$; in particular the set $\bigcup_{\a\le\xi}\{eR\mid e\in E_{\a}\}$ is a complete and irredundant subset of $\ESFP_{R}$.

\begin{prodef}\label{pro simpgenKO}
Let $R$ be a regular and semiartinian ring with Loewy length $\xi+1$ and let $\ZA$ be a complete and irredundant subset of $\ESFP_{R}$. Then:
\begin{enumerate}
  \item For every $\a\le\xi+1$ the equality
\[
(*)_{\a}\hspace{25mm} \Ker(\KO(\f_{\a})) = \langle[A]\mid A\in\ZA\text{ and }A = AL_{\a}\rangle \hspace{27mm}
\]
holds, where $\map{\f_{\a}}{R}{R/L_{\a}}$ is the canonical projection. Consequently $[\ZA]$ genera\-tes $\KO(R)$ and we say that $[\ZA]$ is a \emph{simp-generating} subset of $\KO(R)$.
  \item If $[\ZA]$ is a basis for $\KO(R)$, then we say that $[\ZA]$ is a \emph{simp-basis} for $\KO(R)$. If it is the case, then for every $\a\le\xi+1$ the set
\begin{equation}\label{eq factorbasis}
\{[A/AL_{\a}]\mid A\in\ZA \text{ and $h(A)\ge\a+1$}\}
\end{equation}
is a simp-basis for $\KO(R/L_{\a})$.
  \item If $\KO(R)$ has a simp-basis, then $[\ZB]$ is a simp-basis of $\KO(R)$ whenever $\ZB$ is a complete and irredundant subset of $\ESFP_{R}$.
\end{enumerate}

\end{prodef}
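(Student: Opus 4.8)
The plan is to prove (1) first and then deduce (2) and (3) by formal arguments on free abelian groups. In (1) the inclusion $\supseteq$ is immediate, since $A = AL_{\a}$ forces $A/AL_{\a} = 0$ and hence $\KO(\f_{\a})([A]) = [A/AL_{\a}] = 0$; note that $A = AL_{\a}$ is equivalent to $h(A)\le\a$. For the reverse inclusion I would argue by transfinite induction on $\a$, writing $K_{\a}\defug\langle[A]\mid A\in\ZA,\ h(A)\le\a\rangle$. The case $\a = 0$ is trivial, as $\f_{0}$ is the identity and no $A\in\ZA$ is zero. For a successor $\a = \g+1$ I factor $\f_{\g+1}$ as $R\xrightarrow{\f_{\g}}R/L_{\g}\xrightarrow{\psi}R/L_{\g+1}$, where $\psi$ is the projection modulo the socle $J = L_{\g+1}/L_{\g} = \Soc(R/L_{\g})$; granting the Socle Lemma below and identifying the simple projective $R/L_{\g}$-modules with $\{A/AL_{\g}\mid A\in\ZA,\ h(A) = \g+1\}$, every $x\in\Ker\KO(\f_{\g+1})$ satisfies $\KO(\f_{\g})(x)\in\langle[A/AL_{\g}]\mid h(A)=\g+1\rangle$, so subtracting a lift $\sum_{h(A)=\g+1}m_{A}[A]$ places $x-\sum m_{A}[A]$ in $\Ker\KO(\f_{\g}) = K_{\g}$ by the inductive hypothesis, giving $x\in K_{\g+1}$. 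For a limit ordinal $\l$ I use that $L_{\l} = \bigcup_{\g<\l}L_{\g}$: a stable isomorphism of finitely generated projective $R/L_{\l}$-modules is witnessed by finitely many elements and equations, which already hold modulo some $L_{\g}$ with $\g<\l$, so $\Ker\KO(\f_{\l}) = \bigcup_{\g<\l}\Ker\KO(\f_{\g}) = \bigcup_{\g<\l}K_{\g} = K_{\l}$. The generation statement for $[\ZA]$ is then the special case $\a = \xi+1$, where $L_{\xi+1} = R$.

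The main obstacle is the Socle Lemma: for a regular ring $S$ with socle $J$, the kernel of $\KO(S)\to\KO(S/J)$ is generated by the classes of the simple projective $S$-modules. Since $J = \Soc(S)$ is a (possibly nonunital) regular ring which is a direct sum of simple artinian rings, $\KO(J)$ is free on the simple projective modules and the image of $\KO(J)\to\KO(S)$ is exactly $N\defug\langle[T]\mid T\text{ simple projective}\rangle$. I would then invoke the standard right-exact sequence $\KO(J)\to\KO(S)\to\KO(S/J)\to0$ for a regular ring and a two-sided ideal (cf. \cite{Good:3}) to conclude $\Ker(\KO(S)\to\KO(S/J)) = N$; alternatively one argues directly, lifting a stable isomorphism over $S/J$ to a homomorphism over $S$ and using regularity to see that its kernel and cokernel are semisimple modules inside $J$, so that the discrepancy lies in $N$.

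For (2), suppose $[\ZA]$ is a basis. The homomorphism $\KO(\f_{\a})$ is surjective, because idempotents lift modulo $L_{\a}$ in a regular ring and hence every finitely generated projective $R/L_{\a}$-module lifts; by (1) it therefore induces an isomorphism $\KO(R)/K_{\a}\xrightarrow{\ \cong\ }\KO(R/L_{\a})$. As $[\ZA]$ is a basis of the free group $\KO(R)$ and $K_{\a}$ is generated by the sub-basis $\{[A]\mid h(A)\le\a\}$, the quotient is free on the images of the complementary basis elements $\{[A]\mid h(A)\ge\a+1\}$, and these images are precisely the classes $[A/AL_{\a}]$ of \eqref{eq factorbasis}. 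Since the modules $A/AL_{\a}$ with $h(A)\ge\a+1$ form a complete and irredundant subset of $\ESFP_{R/L_{\a}}$ (their simple tops run bijectively over the simple $R/L_{\a}$-modules), this set is a simp-basis for $\KO(R/L_{\a})$.

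For (3), let $\ZB$ be any complete and irredundant subset of $\ESFP_{R}$. Indexing both $\ZA$ and $\ZB$ by $\Simp_{R}$ through their simple tops, write $A_{U}$ and $B_{U}$ for the members with top $U$; then $h(A_{U}) = h(B_{U}) = h(U)$. Applying (1) with $\a = h(U)-1$ and using that $A_{U}/A_{U}L_{h(U)-1}\cong U\cong B_{U}/B_{U}L_{h(U)-1}$, we get $[A_{U}]-[B_{U}]\in\Ker\KO(\f_{h(U)-1}) = \langle[A_{V}]\mid h(V)<h(U)\rangle$, so $[B_{U}] = [A_{U}] + \sum_{h(V)<h(U)}c_{U,V}[A_{V}]$ with integer coefficients. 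This is a column-finite change of generators, unitriangular with respect to the well-founded ordering by height, hence invertible over $\BZ$ (invert by transfinite recursion on $h(U)$). Therefore $[\ZB]$ is a $\BZ$-basis exactly when $[\ZA]$ is, so $[\ZB]$ is a simp-basis.
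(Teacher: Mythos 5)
Your proposal is correct. For parts (1) and (2) it follows essentially the paper's route: both run a transfinite induction on $\a$ anchored on \cite[Proposition 15.15]{Good:3}, which describes $\Ker(\KO(\f_{\a}))$ as generated by the classes $[P]$ with $P = PL_{\a}$. Your ``Socle Lemma'' is exactly that proposition specialized to $S = R/L_{\g}$ and $J = \Soc(S)$: since $PJ = \Soc(P)$ for $P$ finitely generated projective (\cite[Proposition 1.5]{Bac:15}), the $P$ with $P = PJ$ are precisely the finite direct sums of simple projectives, so no exact-sequence machinery or computation of the nonunital $\KO(J)$ is needed --- which is just as well, because your parenthetical claim that $\Soc(S)$ is a direct sum of simple \emph{artinian} rings is false in general (the homogeneous components are simple but typically not artinian); the slip is harmless since your primary justification is the cited kernel description. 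Your successor step (correct an arbitrary $x\in\Ker(\KO(\f_{\g+1}))$ by a lifted combination $\sum m_{A}[A]$) and the paper's (show each generator $[eR]$, $e = e^{2}\in L_{\g+1}\setminus L_{\g}$, lies in $G_{\g+1}$ by decomposing the semisimple right ideal $(e+L_{\g})(R/L_{\g})$) are the same computation organized in opposite directions; at limit ordinals the paper simply reapplies the proposition ($P$ finitely generated and $P = PL_{\l}$ force $P = PL_{\g}$ for some $\g<\l$), which is lighter than your matrix-compactness argument, though yours is also valid. The genuine divergence is in part (3): the paper proves independence of $[\ZB]$ by a second transfinite induction, isolating the top-height terms of a vanishing relation, pushing the rest into $\KO(R/L_{\b})$ and invoking part (2), with generation coming for free from part (1); you instead apply part (1) once to write $[B_{U}] = [A_{U}] + \sum_{h(V)<h(U)} c_{U,V}[A_{V}]$ and observe that this finitely supported, height-unitriangular transition over the well-founded height ordering is invertible by transfinite recursion. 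Your version is cleaner and makes the structural reason transparent (a unitriangular change of generators relative to a well-founded grading is a change of basis), at the cost of that one abstract verification --- whose key step, extracting that the coefficient of $[A_{U}]$ at maximal height is exactly $k_{U}$, is implicitly the same separation of top-height terms that the paper carries out concretely.
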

\begin{proof}

(1) It follows from the assumptions that
\[
\{A/AL_{\a}\mid A\in\ZA_{\a}\} = \Prosimp_{R/L_{\a}}.
\]
Moreover $\{[\ZA_{\a}]\mid \a\le\xi\}$ is a partition of $[\ZA]$. For every ordinal $\a$ let us consider the subgroup
\[
G_{\a}  = \langle[A]\mid A\in\ZA\text{ and }A = AL_{\a}\rangle
\]
of $\KO(R)$. According to \cite[Proposition 15.15]{Good:3}, for every $\a\le\xi+1$ we have that
\begin{equation}\label{eq KO}
\Ker(\KO(\f_{\a})) = \langle [P]\mid \text{$P\in \FP_{R}$ and $P = PL_{\a}$}\rangle,
\end{equation}
consequently the inclusion $G_{\a} \sbs \Ker(\KO(\f_{\a}))$ is clear.
Now $(*)_{0}$ is obviously true. Given an ordinal $\a>0$, assume that $(*)_{\b}$ is true whenever $\b<\a$. If $\a$ is a limit ordinal, then $L_{\a} = \bigcup_{\b<\a}L_{\b}$ and we have the following equalities:
\begin{align*}
\Ker(\KO(\f_{\a})) &= \bigcup_{\b<\a}\langle [P]\in [\FP_{R}]\mid P = PL_{\b}\rangle = \bigcup_{\b<\a}\Ker(\KO(\f_{\b})) \\
                   &= \bigcup_{\b<\a}\langle[A]\mid A\in\ZA\text{ and }A = AL_{\b}\rangle= \langle[A]\mid A\in\ZA\text{ and }A= AL_{\a}\rangle.
\end{align*}
Assume that $\a = \b+1$ for some $\b$ and take any idempotent $e\in L_{\a}\setminus L_{\b}$. If we show that $[eR]\in G_{\a}$, then the inclusion $\Ker(\KO(\f_{\a})) \sbs G_{\a}$ will follow again from \cite[Proposition 15.15]{Good:3}. Since $(e+L_{\b})(R/L_{\b})$ is a semisimple right ideal of $R/L_{\b}$, then there are $A_{1},\ldots,A_{n}\in\ZA_{\b}$ and positive integers $k_{1},\ldots,k_{n}$ such that
\[
(e+L_{\b})(R/L_{\b}) \is (A_{1}/A_{1}L_{\b})^{k_{1}} \oplus\cdots\oplus (A_{n}/A_{n}L_{\b})^{k_{n}}.
\]
Consequently
\[
[eR]-k_{1}[A_{1}]-\cdots-k_{n}[A_{n}]\in\Ker(\KO(\f_{\b}))
\]
and, since $k_{1}[A_{1}]+\cdots+k_{n}[A_{n}]\in\langle[\ZA_{\b}]\rangle$, it follows from the inductive hypothesis that
\[
[eR]\in G_{\b}+\langle[\ZA_{\b}]\rangle = G_{\a},
\]
as wanted.

(2) Assume that $[\ZA]$ is a simp-basis for $\KO(R)$. Given $\a\le\xi$, it follows from the equality $(*)_{\a}$ that the set $\bigcup_{\b<\a}[\ZA_{\b}]$ is a basis for  $\Ker(\KO(\f_{\a}))$; consequently $\KO(R/L_{\a})\is\KO(R)/\Ker(\KO(\f_{\a}))$ is free with the set \eqref{eq factorbasis} as a basis.

(3) Given two complete and irredundant subsets $\ZA\sbs\ESFP_{R}$ and $\ZB\sbs\ESFP_{R}$, assume that $[\ZA]$ is a simp-basis of $\KO(R)$. We will show that $[\ZB]$ too is a simp-basis by pro\-ving that $\bigcup_{\g<\a}[\ZB_{\g}]$ is independent for every ordinal $\a\ge 1$. Since $\ZB_{0}$ and $\ZA_{0}$ are both irredundant representative sets of simple and projective right $R$-modules, then $[\ZB_{0}] = [\ZA_{0}]$ and hence $[\ZB_{0}]$ is independent; this proves that our statement is true when $\a = 1$. Given an ordinal $\a$ with $0<\a\le\xi+1$, assume that $\bigcup_{\g<\b}[\ZB_{\g}]$ is independent for every $\b<\a$. If $\a$ is a limit ordinal, then $\bigcup_{\g<\a}[\ZB_{\g}]$ is independent. Suppose that $\a = \b+1$ for some $\b$, take distinct elements $[B_{1}],\ldots,[B_{n}]\in \bigcup_{\g<\a}[\ZB_{\g}]$ and assume that there are integers $k_{1},\ldots,k_{n}$ such that $k_{1}[B_{1}]+\cdots+k_{n}[B_{n}] = 0$. We may assume that there is an integer $r$ with $1\le r\le n$ such that $h(B_{i})\le \b$ if $1\le i\le r$, while $h(B_{i}) = \a$ if $r<i\le n$. Then
\begin{equation}\label{eq simpbasis2}
k_{r+1}[B_{r+1}]+\cdots+k_{n}[B_{n}] = -k_{1}[B_{1}]-\cdots-k_{r}[B_{r}]
\end{equation}
and, since the second member belongs to $\Ker(\KO(\f_{\b}))$ by
\eqref{eq KO}, we infer that
\[
k_{r+1}[B_{r+1}/B_{r+1}L_{\b}]+\cdots+k_{n}[B_{n}/B_{n}L_{\b}] = 0
\]
in $\KO(R/L_{\b})$. Now $B_{r+1}/B_{r+1}L_{\b},\ldots,B_{n}/B_{n}L_{\b}$ are pairwise non isomorphic simple and projective right $R/L_{\b}$-modules, therefore for each $i$ with $r<i\le n$ there is $A_{i}\in\ZA_{\b}$ such that $B_{i}/B_{i}L_{\b} \is A_{i}/A_{i}L_{\b}$ and so $[B_{i}/B_{i}L_{\b}] = [A_{i}/A_{i}L_{\b}]$. Thus it follows from the above proved property (2) that $k_{r+1} = \cdots = k_{n} = 0$. Since $[B_{1}],\ldots,[B_{r}]$ are independent by the inductive hypothesis, we conclude that $k_{1} = \cdots = k_{r} = 0$ as well and this shows that $\bigcup_{\b<\a}[\ZB_{\b}]$ is independent.
\end{proof}

We know from \cite[Theorem 1]{BacCiamp:14} that if $R$ is unit-regular, then $\KO(R)$ admits $\bigcup_{\a\le\xi}\{[eR]\mid e\in E_{\a}\}$ as a simp-basis. Our main objective in this section will be to show that a regular and semiartinian ring $R$ is unit-regular if and only if the group $\KO(R)$ is free with a simp-basis, if and only if all factor rings of $R$ are directly finite. We observe that, for a regular and semiartinian ring $R$, the group $\KO(R)$ may be free without admitting a simp-basis; in fact, let $R$ be the regular ring of \cite[Example 1]{MenalMoncasi:1}. Then $\KO(R) \is \BZ$; however $R$ is primitive and semiartinian with Loewy length two and has two isoclasses of simple right $R$-modules, therefore $\KO(R)$ cannot have a simp-basis. Here the point is that $R$ is not unit-regular and, more, it is not directly finite either.

In order to reach our goal we need some preparation.

Assume that $R$ is a prime ring with $S = \Soc(R)\ne 0$ and let $U$ be a minimal left ideal of $R$. By considering $U$ as a right vector space over the division ring $D = \End({_RU})$ and, by setting $Q = \End(U_D)$ and $K = \Soc(Q)$, it is well known that $R$ can be viewed as a dense subring of $Q$ and $S = R\cap K$. If $e$ is any primitive idempotent of $R$, namely $Re\is U$, then $Re = Qe$. As a result $S = QS$, while $S = SQ$ if and only if $S = K$; moreover, if $e,f$ are \emph{any} idempotents of $R$ with $f\in S$, then $eRf = eQf$. In the following lemmas we keep these settings and notations.

Recall that if $a, a'$ are elements of a ring $R$, then $a'$ is a \emph{quasi-inverse} for $a$ when $a = aa'a$. If it is the case, then also $b = a'aa'$ is a quasi-inverse for $a$ and $a$ is a quasi-inverse for $b$.

\begin{lem}\label{invertmodL 1}
Let $R$ be a prime ring with $S = \Soc(R)\ne 0$ and let $a,b\in R$ be such that $a = aba$ and $b = bab$. Then the following conditions are equivalent, where the dimensions are relative to the division ring $D$:
\begin{enumerate}
    \item $\dim(\Ker(a))<\infty$ (resp. $\dim(\Coker(a))<\infty$).
    \item $a+S$ is left (resp. right) invertible in $R/S$.
    \item $1-ba\in S$ (resp. $1-ab\in S$).
\end{enumerate}
\end{lem}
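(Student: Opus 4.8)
The plan is to translate everything into the linear algebra of the right $D$-vector space $U$, using the embedding $R\hookrightarrow Q = \End(U_D)$ together with the two facts $S = R\cap K$ and $K = \Soc(Q)$ (so that $K$ is precisely the ideal of finite-rank transformations of $U$). The first observation is that $e\defug ba$ and $f\defug ab$ are idempotents of $R$: indeed $baba = b(aba) = ba$ and $abab = a(bab) = ab$. Viewing $a$ as the operator $u\mapsto au$ on $U$, I would check that $\Ker(a) = \Ker(ba)$ (the inclusion $\subseteq$ is clear, and $bau=0$ forces $au = abau = 0$ since $aba=a$) and that $\Imm(a) = \Imm(ab)$ (since $a=aba$ gives $aU = ab(aU)\sbs abU\sbs aU$). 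Because $1-ba$ and $1-ab$ are the complementary idempotents, this yields $\Ker(a) = \Imm(1-ba)$ and $U/\Imm(a)\is\Imm(1-ab)$ as $D$-spaces, so that $\dime\Ker(a)$ equals the rank of $1-ba$ and $\dime\Coker(a)$ equals the rank of $1-ab$.

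With this dictionary in hand, the equivalence $(1)\Leftrightarrow(3)$ is immediate: since $1-ba\in R$, it lies in $S = R\cap K$ if and only if it lies in $K$, i.e. if and only if it has finite rank, i.e. (by the previous paragraph) if and only if $\dime\Ker(a)<\infty$; the cokernel/right version is identical with $1-ab$ in place of $1-ba$. For $(3)\Rightarrow(2)$ there is nothing to do beyond observing that $1-ba\in S$ says precisely that $b+S$ is a left inverse of $a+S$ in $R/S$ (and $1-ab\in S$ that $b+S$ is a right inverse).

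The only step requiring a genuine argument is $(2)\Rightarrow(1)$. Suppose $a+S$ is left invertible, so that $ca-1\in S$ for some $c\in R$; writing $s = 1-ca\in S$, this $s$ has finite rank. Then $\Ker(a)\sbs\Ker(ca)$, and $\Ker(ca) = \Ker(1-s) = \{u\mid u = su\}\sbs\Imm(s)$ is finite-dimensional, whence $\dime\Ker(a)<\infty$. Dually, if $a+S$ is right invertible, write $s = 1-ac\in S$, again of finite rank; from $u = (1-s)u+su$ one gets $U = \Imm(1-s)+\Imm(s)$ with $\Imm(1-s) = \Imm(ac)\sbs\Imm(a)$, so $U/\Imm(a)$ is a quotient of $U/\Imm(1-s)$, which is finite-dimensional because $U = \Imm(1-s)+\Imm(s)$; hence $\dime\Coker(a)<\infty$. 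Combining these, the cycle $(1)\Rightarrow(3)\Rightarrow(2)\Rightarrow(1)$ closes in both the left/kernel and the right/cokernel readings. The main obstacle is conceptual rather than technical: one must keep straight the passage from the ring-theoretic membership ``$\in S$'' to the operator-theoretic property ``finite rank'', which rests entirely on $S = R\cap K$ and $K = \Soc(Q)$; once the idempotents $ba$ and $ab$ have been used to pin down $\Ker(a)$ and $\Imm(a)$, the remaining computations are routine.
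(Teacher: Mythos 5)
Your proof is correct and follows essentially the same route as the paper's: both use the idempotents $ab$ and $ba$ to identify $\Ker(a)$ with $\Imm(1-ba)$ and $\Coker(a)$ with $\Imm(1-ab)$, invoke $S = R\cap K$ with $K = \Soc(Q)$ the finite-rank operators to get $(1)\Leftrightarrow(3)$, and close $(2)\Rightarrow(1)$ via the same containments ($\Ker(a)\sbs\Ker(ca)\sbs\Imm(1-ca)$, and $\Imm(ac)\sbs\Imm(a)$ of finite codimension). The only differences are cosmetic: you swap the labels $e$ and $f$ and prove $(3)\Rightarrow(1)$ directly rather than through the cycle.
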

\begin{proof}
Let us consider the idempotents $e = ab$ and $f = ba$. Then $a = ea = af$, $b = fb = be$ and we have the equalities
\begin{gather}
aR = eR, \quad Ra = Rf, \label{eq aR = eR Ra = Rf}\\
bR = fR, \quad Rb = Re; \label{eq bR = fR Rb = Re}
\end{gather}
it follows that, as $D$-subspaces of $U$,
%\begin{equation}\label{eq kercoker aba bab}
   \begin{gather}
\Ker(a) = (1-f)U \is \Coker(b),  \label{eq kercoker aba bab}\\
\Ker(b) = (1-e)U \is \Coker(a).\label{eq kercoker bab aba}
\end{gather}
%\end{equation}

(1)$\Rightarrow$(3) If $\Ker(a) = (1-f)U$ is finite dimensional, then
\[
1-ba = 1-f\in R\cap K = S.
\]
Similarly, if $\Coker(a) = (1-e)U$ is finite dimensional, then
\[
1-ab = 1-e\in R\cap K = S.
\]

(3)$\Rightarrow$(2) is obvious.

(2)$\Rightarrow$(1) Given any element $c\in R$, we have that $\Ker(a)\sbs \Imm(1-ca)$. If $1-ca\in S$, that is, $\Imm(1-ca)$ is finite dimensional, then so is $\Ker(a)$. On the other hand, if $1-ac\in S$, then $\Ker(1-ac)$ has finite codimension; since $\Ker(1-ac)\sbs \Imm(a)$, it follows that $\Imm(a)$ has finite codimension.
\end{proof}

\begin{lem}\label{invertmodL 2}
Let $R$ be a prime and regular ring with $S = \Soc(R)\ne 0$ and let $a\in R$. Then the following conditions are equivalent:
\begin{enumerate}
    \item $\dim(\Ker(a))<\infty$ and  $\dim(\Ker(a))\le\dim(\Coker(a))$.
    \item There is a left invertible element $a'\in R$ such that $a-a'\in S$.
\end{enumerate}
Similarly, the following conditions are equivalent:
\begin{enumerate}
\setcounter{enumi}{2}
    \item $\dim(\Coker(a))<\infty$ and  $\dim(\Coker(a))\le\dim(\Ker(a))$.
    \item There is a right invertible element $a'\in R$ such that $a-a'\in S$.
\end{enumerate}
Consequently $a$ is congruent modulo $S$ to a unit of $R$ if and only if
\begin{equation}\label{eq congrunit}
\dim(\Ker(a)) = \dim(\Coker(a))<\infty.
\end{equation}
\end{lem}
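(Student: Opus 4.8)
The plan is to translate everything into the action of $a$ on the faithful module $U$, where $R$ sits as a dense subring of $Q=\End(U_D)$ and $S=R\cap K$ consists precisely of the finite-rank operators lying in $R$. First I would fix a quasi-inverse $b$ of $a$ (available since $R$ is regular) and reuse the computations from the proof of Lemma~\ref{invertmodL 1}: with $e=ab$ and $f=ba$ one has $\Ker(a)=(1-f)U$, $\Imm(a)=eU$, $\Coker(a)\is(1-e)U$, and $a$ restricts to a $D$-isomorphism $fU\xrightarrow{\sim}eU$. I would also record the reduction that, because $R$ is regular, an element $a'\in R$ is left invertible exactly when $\Ker(a')=0$: a quasi-inverse $b'$ gives an idempotent $b'a'$ with $(1-b'a')U=\Ker(a')=0$, whence $b'a'=1$ already in $R$; dually $a'$ is right invertible iff $\Imm(a')=U$.

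For (1)$\Rightarrow$(2), set $m=\dim\Ker(a)\le n=\dim\Coker(a)$ with $m<\infty$. By Lemma~\ref{invertmodL 1} the hypothesis $m<\infty$ gives $1-f=1-ba\in S$. The crucial point is to produce the required perturbation inside $R$: since $1-f\in S$, the identity $eRf=eQf$ for idempotents with $f\in S$ yields $(1-e)R(1-f)=(1-e)Q(1-f)\is\Hom_{D}((1-f)U,(1-e)U)$. As $\dim(1-f)U=m\le n=\dim(1-e)U$, this space contains an injective map $s$, and then $s\in(1-e)R(1-f)\sbs S$. I would set $a'=a+s$ and check $\Ker(a')=0$: if $a'u=0$, splitting $u=fu+(1-f)u$ and using that $a(fu)\in eU$ while $s((1-f)u)\in(1-e)U$ forces both summands to vanish, after which injectivity of $a$ on $fU$ and of $s$ on $(1-f)U$ gives $u=0$. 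Hence $a'$ is left invertible with $a-a'=-s\in S$.

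For (2)$\Rightarrow$(1), write $a=a'+s$ with $a'$ left invertible (so injective) and $s\in S$ of finite rank. Then $a'(\Ker(a))\sbs\Imm(s)$, and injectivity of $a'$ forces $\dim\Ker(a)\le\dim\Imm(s)<\infty$. For the inequality, if $\dim\Coker(a)=\infty$ it is automatic; if $\dim\Coker(a)<\infty$ then $a$ is Fredholm, and since finite-rank perturbations preserve the Fredholm index, $\dim\Ker(a)-\dim\Coker(a)=\mathrm{ind}(a)=\mathrm{ind}(a')=-\dim\Coker(a')\le 0$ because $a'$ is injective. The equivalence (3)$\Leftrightarrow$(4) is entirely dual, replacing $\Ker$ by $\Coker$, ``left'' by ``right'', and $f$ by $e$.

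Finally, for the last equivalence: if $a$ is congruent mod $S$ to a unit $a'$, then $a'$ is both left and right invertible, so conditions (2) and (4) hold simultaneously and hence (1) and (3) give $\dim\Ker(a)=\dim\Coker(a)<\infty$. Conversely, when $\dim\Ker(a)=\dim\Coker(a)=m<\infty$ I would rerun the construction of (1)$\Rightarrow$(2) choosing $s$ to be an \emph{isomorphism} $(1-f)U\to(1-e)U$ (possible since both have dimension $m$); then $a'=a+s$ is simultaneously injective and surjective, hence a unit, with $a-a'\in S$. I expect the main obstacle to be the two places where one must stay inside $R$ rather than merely in $Q$: locating the perturbation $s$ in $S$ via the identity $(1-e)R(1-f)=(1-e)Q(1-f)$, and, in (2)$\Rightarrow$(1), making the index-counting argument rigorous (reducing to rank-one perturbations) so that the comparison of $\dim\Ker$ and $\dim\Coker$ is forced purely algebraically.
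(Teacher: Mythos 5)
Your reduction to operators on $U$, your construction for (1)$\Rightarrow$(2), and your handling of the final unit statement all essentially match the paper, and your index-theoretic route through (2)$\Rightarrow$(1) is a legitimate alternative to the paper's explicit bookkeeping: the paper instead compares the decompositions $U = \Ker(a)\oplus\Ker(c)\oplus A = aU\oplus B = a'U\oplus C$ to derive the equality $\dim(\Coker(a)) = \dim(\Ker(a))+\dim(\Coker(a'))$, which it then reuses verbatim for the unit statement, whereas you split into cases and invoke invariance of the index under finite-rank perturbation. That invariance is a genuinely algebraic fact over the division ring $D$ (restrict $a$ and $a'$ to $\Ker(s)$, a subspace of finite codimension, where they agree, and compute how restriction to a finite-codimensional subspace shifts kernel and cokernel), so flagging it as ``to be made rigorous'' is fine and not a gap.

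The genuine gap is your sentence declaring (3)$\Leftrightarrow$(4) ``entirely dual''. The setting is not left-right symmetric: as recorded before Lemma \ref{invertmodL 1}, one has $S = QS$ but $S = SQ$ only when $S = K$, and the identity $eRf = eQf$ holds for idempotents $e,f\in R$ only when the \emph{right-hand} idempotent $f$ lies in $S$. In (1)$\Rightarrow$(2) your perturbation sits in $(1-e)Q(1-f)$ with $1-f\in S$ (because $\Ker(a)$ is finite dimensional under (1)), so membership in $R$ is automatic. In (3)$\Rightarrow$(4) the mirrored perturbation --- a map killing $fU$ and carrying $(1-f)U$ onto $(1-e)U$ --- again lies in $(1-e)Q(1-f)$, but now only $1-e\in S$, while $1-f$ corresponds to $\Ker(a)$, which under (3) may be \emph{infinite} dimensional; nothing then forces such an element of $Q$ into $R$, and your proof stalls exactly where it must produce $a'\in R$. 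This is the point on which the paper spends the bulk of its proof of (3)$\Rightarrow$(4): it first builds $c\in(1-f)Q(1-e) = (1-f)R(1-e)$ (legitimate, since the right idempotent $1-e$ is in $S$), an injection of $(1-e)U$ into $(1-f)U$; then uses regularity to write $1-f = g+h$ with $cR = gR$, so that $g$ is an idempotent of $S$ with $gU = cU$ finite dimensional; and only then extracts the actual perturbation $d\in(1-e)Rg = (1-e)Qg$ with $cd = 1-e$ and $dc = g$, a surjection supported on the finite-dimensional subspace $gU$ and hence guaranteed to lie in $R$. You need this maneuver (or a substitute, e.g.\ a Litoff-type argument realizing a prescribed finite-dimensional subspace of $\Ker(a)$ as $gU$ for an idempotent $g\in S$). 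Your final converse is unaffected: there $\dim(\Ker(a)) = \dim(\Coker(a))<\infty$, so $1-f\in S$ and your isomorphism $s\in(1-e)Q(1-f) = (1-e)R(1-f)$ does lie in $R$.
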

\begin{proof}
Let $b$, $e$ and $f$ have the same meaning as in Lemma \ref{invertmodL 1} and its proof.

(1)$\Rightarrow$(2) Assuming (1), then it follows from Lemma \ref{invertmodL 1} that $1-f\in S$. There exists $c\in Q$ such that $cfU = 0$ and the restriction of $c$ to $(1-f)U = \Ker(a)$ is a monomorphism into $(1-e)U = \Coker(a)$. As a result, since $QS = S$, we have that
\[
c\in (1-e)Q(1-f) = (1-e)R(1-f) \sbs S.
\]
By taking $a' = a+c\in R$, it is easy to check that $\Ker(a') = 0$ . Then $a'$ has a left inverse in $Q$. Inasmuch as $R$ is a regular ring, we infer that $a'$ has a left inverse in $R$ too and $a-a'\in S$, as wanted.

(2)$\Rightarrow$(1) Assume (2) and set $c = a-a'$. Then it follows that $\Ker(a)\cap\Ker(c) = 0$ and therefore $\dim(\Ker(a))<\infty$. There are some subspaces $A$, $B$, $C$ of $U_D$ such that
\[
U = \Ker(a)\oplus\Ker(c)\oplus A = aU\oplus B = a'U\oplus C.
\]
As a result we obtain:
\begin{align*}
aA\oplus a\Ker(c)\oplus B &= a'A\oplus a'\Ker(a)\oplus a'\Ker(c)\oplus C \\
                          &= a'A\oplus c\Ker(a)\oplus a\Ker(c)\oplus C,
\end{align*}
therefore
\[
aA\oplus B \is a'A\oplus c\Ker(a)\oplus C.
\]
Noting that $\dim(aA) = \dim(A) = \dim(a'A)$ is a finite cardinal, we conclude that
\begin{align*}
\dim(\Coker(a)) &= \dim(B) = \dim(c\Ker(a))+\dim(C) \\
&= \dim(\Ker(a))+\dim(\Coker(a'))
\end{align*}
and hence (1) holds.

If $a'$ is a unit, then $\dim(\Coker(a')) = 0$ and the above equalities show that \ref{eq congrunit} holds.

(3)$\Rightarrow$(4) Assume (3) and note that $1-e = 1-ab\in S$ by Lemma \ref{invertmodL 1}. There is $c\in Q$ such that $ceU = 0$ and the restriction of $c$ to $\Coker(a) = (1-e)U$ is a monomorphism into $\Ker(a) = (1-f)U$, so that
\[
c\in (1-f)Q(1-e) = (1-f)R(1-e) \sbs S
\]
(remember that $QS = S$). In particular $cR\sbs (1-f)R$. By the regularity of $R$, there are two orthogonal idempotents $g,h\in R$ such that $1-f = g+h$ and $cR = gR$. Consequently $(1-f)U = gU\oplus hU$ and $c\in gR(1-e)$, therefore $c$ induces an isomorphism from $(1-e)U$ to $cU = gU$. Since $g\in S$, it follows that $(1-e)Qg = (1-e)Rg$, thus there exists $d\in (1-e)Rg$ such that
\[
cd = 1-e \quad \text{and} \quad dc = g.
\]
We claim that $a' = a+d$ is right invertible in $R$. Given $u\in U$, let $v$ be the unique element of $gU$ such that $dv = (1-e)u$. Noting that $g = g(1-f) = (1-f)g$, we obtain:
\begin{align*}
a'(bu+v) &= (a+d)(bu+v) = abu + av + dbu + dv \\
            &= eu + af(1-f)gv + dg(1-f)fbu + (1-e)u \\
            &= eu + (1-e)u \\
            &= u.
            \end{align*}
This proves that $a'U = U$, hence $a'$ is right invertible in $Q$ and so is right regular. Again, the regularity of $R$ implies that $a'$ is right invertible in $R$.

(4)$\Rightarrow$(3) Assume (4) and set $c = a-a'$. Then $U = aU + cU$ and therefore $\dim(\Coker(a))<\infty$. Let $b\in R$ be a right inverse for $a'$. Then we have that
\[
ab - cb = a'b = 1
\]
and $cb\in S$, meaning that $ab$ is congruent modulo $S$ to a unit. As we have shown previously, this implies that
\[
\dim(\Ker(ab)) = \dim(\Coker(ab)).
\]
By using the fact that $b$ is injective and the equivalence (1)$\Leftrightarrow$(2) we conclude that
\[
\dim(\Ker(a)) = \dim(\Ker(ab)) = \dim(\Coker(ab)) \ge \dim(\Coker(a)).
\]
It remains to show that if \ref{eq congrunit} holds, then $a$ is congruent modulo $S$ to a unit. So, assume that $a$ satisfies \ref{eq congrunit}. Then (1) holds and so there is a left invertible element $a'\in R$ such that $a-a'\in S$; moreover, as we have seen in the prof of the implication (2)$\Rightarrow$(1), we have the equality
\[
\dim(\Coker(a)) = \dim(\Ker(a))+\dim(\Coker(a')).
\]
The assumption implies now that $\dim(\Coker(a')) = 0$, so $a'$ is a unit of $Q$ and hence $a'$ is a unit of $R$ too, because of the regularity of $R$.
\end{proof}

\begin{lem}\label{lem directfin}
Assume that $R$ is a prime and regular ring with a nonzero socle $S$. Then $R$ is directly finite if and only if the following conditions hold:
\begin{enumerate}
    \item $R/S$ is directly finite;
    \item Every unit of $R/S$ lifts to a unit of $R$.
\end{enumerate}
\end{lem}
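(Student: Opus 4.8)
The plan is to translate the three ring-theoretic conditions into statements about the kernel and cokernel of the operator that an element $a\in R$ induces on $U$ by left multiplication, using the faithful dense embedding $R\sbs Q=\End(U_{D})$ together with regularity. Two elementary facts organize everything: (i) for $a\in R$ one has $aU=U$ if and only if $a$ is right invertible in $R$ (if $aU=U$, then for a quasi-inverse $a^{*}$ the idempotent $aa^{*}$ has full image, so $aa^{*}=1$), and dually $\Ker(a)=0$ iff $a$ is left invertible; hence $R$ is directly finite exactly when every surjective element of $R$ is injective. (ii) By Lemma \ref{invertmodL 1}, $a+S$ is right (resp. left) invertible in $R/S$ iff $\dim(\Coker(a))<\infty$ (resp. $\dim(\Ker(a))<\infty$), so $a+S$ is a unit iff both are finite; and by Lemma \ref{invertmodL 2}, $a+S$ lifts to a unit of $R$ iff $\dim(\Ker(a))=\dim(\Coker(a))<\infty$.

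For the implication ``$\Leftarrow$'' I argue directly. If $ab=1$ in $R$, then $aU=U$, so $\dim(\Coker(a))=0$ and $a+S$ is right invertible, hence a unit of $R/S$ since $R/S$ is directly finite. By hypothesis (2) and Lemma \ref{invertmodL 2} we get $\dim(\Ker(a))=\dim(\Coker(a))=0$, so $a$ is injective and surjective on $U$, thus invertible in $Q$; then $b=a^{-1}$ in $Q$, whence $ba=1$ in $Q$ and therefore in $R$. So $R$ is directly finite.

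For ``$\Rightarrow$'' assume $R$ directly finite. Condition (2) amounts to an index-zero statement: if $a+S$ is a unit, then $k=\dim(\Ker(a))$ and $m=\dim(\Coker(a))$ are finite, and I must show $k=m$. Put $e=aa^{*}$, $f=a^{*}a$, so $\Ker(a)=(1-f)U$ and $\Coker(a)\is(1-e)U$; as both are finite-dimensional, $1-e,1-f\in R\cap K=S$. If $k>m$, pick $V\sbs(1-f)U$ with $\dim(V)=m$ and, using $(1-e)R(1-f)=(1-e)Q(1-f)$ (valid because $1-f\in S$), realize $c\in R$ acting as an isomorphism $V\to(1-e)U$ and as $0$ on a complement of $V$ in $(1-f)U$. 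Then $a'=a+c$ is surjective but has nonzero kernel, contradicting direct finiteness; the case $m>k$ is symmetric and yields an injective, non-surjective element. Hence $k=m$, and Lemma \ref{invertmodL 2} gives the required lift.

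It remains to prove condition (1), i.e. that $\dim(\Coker(a))<\infty$ forces $\dim(\Ker(a))<\infty$. Suppose $m=\dim(\Coker(a))<\infty$ while $\dim(\Ker(a))=\infty$. I would like to run the previous modification to produce a surjective element with nonzero kernel, but now $1-f$ (where $\Ker(a)=(1-f)U$) has infinite rank and is not in $S$, so the identity $(1-e)R(1-f)=(1-e)Q(1-f)$ is unavailable; this is the main obstacle. I get around it with a sublemma: the infinite-dimensional space $(1-f)U$ contains the image of an idempotent $g\in S$ of every prescribed finite rank. To prove it I pass to the corner ring $T=(1-f)R(1-f)$, which is prime and regular with identity $1-f$ and has nonzero socle (as $(1-f)S(1-f)\ne0$ by primeness) acting on the infinite-dimensional $(1-f)U$; such a ring contains arbitrarily many orthogonal minimal idempotents, and the sum of $m$ of them is the desired $g\in\Soc(T)\sbs S$. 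Granting the sublemma, choose such a $g$ with $gU\sbs\Ker(a)$ and $\dim(gU)=m$; since $g\in S$ we have $(1-e)Rg=(1-e)Qg$, so we may pick $c\in R$ that is an isomorphism $gU\to(1-e)U$ and $0$ on $\Ker(g)$. Then $a'=a+c$ maps $fU\oplus gU$ onto $eU\oplus(1-e)U=U$, so it is surjective, while its kernel still contains the infinite-dimensional space $\{u\in(1-f)U\mid gu=0\}$, contradicting direct finiteness. The only delicate point is this manufacture of socle idempotents inside the infinite-dimensional kernel so that the realization identity $eRf=eQf$ applies; everything else is bookkeeping with the two preceding lemmas.
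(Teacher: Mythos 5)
Your proof is correct, but the forward direction takes a genuinely different route from the paper's. The ``if'' half is essentially the paper's argument (lift $\bar{a}$ to a unit of $R$, use Lemma \ref{invertmodL 2} to get $\dim(\Ker(a)) = \dim(\Coker(a)) = 0$, invert in $Q$, then in $R$ by regularity). For the ``only if'' half, the paper never meets your ``main obstacle'': given $\dim(\Coker(a))<\infty$, it simply splits into the cases $\dim(\Ker(a))\le\dim(\Coker(a))$ and $\dim(\Ker(a))\ge\dim(\Coker(a))$ --- the second case automatically covering an infinite-dimensional kernel --- and applies the implications (1)$\Rightarrow$(2) and (3)$\Rightarrow$(4) of Lemma \ref{invertmodL 2} to get a left or right invertible $a'\in R$ with $a-a'\in S$; direct finiteness of $R$ then makes $a'$ a unit, proving conditions (1) and (2) in one stroke, with no argument by contradiction. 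The paper escapes your difficulty because of the orientation of the correcting map: in the proof of (3)$\Rightarrow$(4) of Lemma \ref{invertmodL 2}, $c$ sends the finite-dimensional cokernel $(1-e)U$ \emph{into} the kernel $(1-f)U$, so $c\in(1-f)Q(1-e) = (1-f)R(1-e)\sbs S$, which needs only $1-e\in S$; the kernel may be infinite-dimensional. You instead map part of the kernel \emph{onto} the cokernel, which forces your sublemma manufacturing a finite-rank socle idempotent $g$ inside $\Ker(a)$ via the corner ring $T = (1-f)R(1-f)$. That sublemma is correct ($T$ is prime and regular, $\Soc(T) = (1-f)S(1-f)\ne 0$ by primeness, and $\Soc(T)$ is essential in $T$, so it cannot be a finite sum of minimal right ideals when $(1-f)U$ is infinite-dimensional, whence orthogonal rank-one idempotents of any prescribed finite number exist), but it imports corner-ring structure theory the paper never needs; what it buys you is independence from the (3)$\Rightarrow$(4) half of Lemma \ref{invertmodL 2}. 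One cosmetic repair: since nothing forces $gf = 0$, the element $c$ need not vanish on $fU$, so $a' = a+c$ need not map $fU$ into $eU$ as your phrasing suggests; surjectivity still holds because $a'(gU) = (1-e)U$ while for $v\in fU$ one has $a'(v)-a(v) = c(v)\in(1-e)U\sbs a'(gU)$, so $eU = a(fU)\sbs a'(U)$.
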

\begin{proof}
Suppose that (1) and (2) hold and let $a,b\in R$ be such that $ab = 1$. Then $\bar{a}\bar{b} = \bar{1} = \bar{b}\bar{a}$ in $R/S$ and so $\bar{a} = \bar{c}$ for some unit $c\in R$. It follows from Lemma \ref{invertmodL 2} that $\dim(\Ker(a)) = \dim(\Coker(a))$ and, since $\Coker(a) = 0$, then $\Ker(a) = 0$ as well. Thus $a$ is invertible in $Q$ and the regularity of $R$ implies that $a$ has its inverse in $R$.

Conversely, assume that $R$ is directly finite, let $x$ be a right invertible element of $R/S$ and choose $a\in R$ such that $x = \bar{a}$. Then $\Coker(a)$ is finite dimensional by Lemma \ref{invertmodL 1}. If $\dim(\Ker(a)) \le \dim(\Coker(a))$ (resp. $\dim(\Ker(a)) \ge \dim(\Coker(a))$), then it follows from Lemma \ref{invertmodL 2} that there is some left (resp. right) invertible element $a'\in R$ such that $a-a'\in S$. Then $a'$ is invertible by the direct finiteness of $R$ and, consequently, $x = \bar{a} = \bar{a'}$ is invertible. We conclude that $R/S$ is directly finite and the above argument shows that each unit of $R/S$ lifts to a unit of $R$.
\end{proof}

In what follows $R$ denotes a regular ring with essential socle $S$. Let $(U_{\l})_{\l\in\L}$ be a representative family of minimal left ideals, where $U_{\l}\ncong U_{\mu}$ if $\l\ne\mu$. Thus
\[
_{R}\Prosimp = \{U_{\l}\mid\l\in\L\};
\]
moreover, given $\l\in\L$, the $U_{\l}$-homogeneous component of $S$ is the ideal $S_{\l}\defug U_{\l}R$ and hence
\[
S = \bigoplus_{\l\in\L}S_{\l}.
\]
For every $\l\in\L$ we have that $U_{\l}$ is a right vector space over the division ring $D_{\l}\defug \End({_R(U_{\l})})$. We set $Q_{\l} \defug \End((U_{\l})_{D_{\l}})$ and $K_{\l}\defug l_{R}(U_{\l})$. Thus the ring $R_{\l} \defug R/K_{\l}$ imbeds canonically as a dense subring into $Q_{\l}$ and $S_{\l}$ is identified with $\Soc(R_{\l})$ via the canonical projection $\map{\pi_{\l}}{R}{R_{\l}}$. Since $\bigcap_{\l\in\L}K_{\l} = 0$, we have the imbeddings of rings
\[
R\hookrightarrow \prod_{\l\in\L}R_{\l}\hookrightarrow \prod_{\l\in\L}Q_{\l}.
\]
If $a\in R$, for every $\l\in\L$ we denote by $a_{\l}$ the element $\pi_{\l}(a)$ and we identify $a$ with the element $(a_{\l})_{\l\in\L}\in \prod_{\l\in\L}R_{\l}$. In the proof of the following lemma we keep the above notations and settings.

\begin{lem}\label{lem directfin1}
Assume that $R$ is a regular ring with essential socle $S$ and suppose that all primitive factor rings of $R$ are directly finite. Then every unit of $R/S$ lifts to a unit of $R$.
\end{lem}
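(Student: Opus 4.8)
The plan is to reduce the problem to the individual primitive factor rings $R_{\l}=R/K_{\l}$, solve it there with Lemma~\ref{lem directfin}, and then reassemble a single correction term in $S$. Let $\bar{a}$ be a unit of $R/S$, pick $\bar{b}$ with $\bar{a}\bar{b}=\bar{b}\bar{a}=\bar{1}$, and lift $\bar{a},\bar{b}$ to $a,b\in R$; thus $ab-1$ and $ba-1$ both lie in $S$. The key finiteness observation is that, since $S=\bigoplus_{\l\in\L}S_{\l}$, these two elements have finite support: there is a finite subset $F\sbs\L$ such that $\pi_{\l}(ab-1)=\pi_{\l}(ba-1)=0$, i.e. $a_{\l}b_{\l}=b_{\l}a_{\l}=1$ in $R_{\l}$, for every $\l\in\L\setminus F$. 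Hence $a_{\l}$ is already a unit of $R_{\l}$ outside the finite set $F$.

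For $\l\in F$ I would proceed as follows. Applying $\pi_{\l}$ to the memberships $ab-1,ba-1\in S$ and using $\pi_{\l}(S)=\Soc(R_{\l})=S_{\l}$ shows that $a_{\l}+S_{\l}$ is a unit of $R_{\l}/S_{\l}$. Now $R_{\l}$ is prime and regular with nonzero socle $S_{\l}$, and it is directly finite because it is a primitive factor ring of $R$; therefore Lemma~\ref{lem directfin} (its condition (2), which holds once $R_{\l}$ is directly finite) guarantees that the unit $a_{\l}+S_{\l}$ of $R_{\l}/S_{\l}$ lifts to a unit $u_{\l}$ of $R_{\l}$. I then record the correction $c_{\l}\defug a_{\l}-u_{\l}\in S_{\l}$ in the $\l$-th factor; for $\l\in\L\setminus F$ I set $u_{\l}=a_{\l}$ and $c_{\l}=0$.

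Finally I would assemble these into one element of $S$. Since distinct homogeneous components annihilate one another, $S_{\l}\sbs K_{\mu}$ for $\l\ne\mu$, so the identification of $S_{\l}$ with $\Soc(R_{\l})$ lets me view each $c_{\l}$ as an element of $S_{\l}\sbs S$; because $c_{\l}=0$ for $\l\notin F$, the sum $c\defug\sum_{\l\in F}c_{\l}$ is a well-defined element of $S$ with $\pi_{\l}(c)=c_{\l}$ for every $\l$. Set $a'\defug a-c$, so that $\pi_{\l}(a')=a_{\l}-c_{\l}=u_{\l}$ is a unit of $R_{\l}$ for all $\l\in\L$. If $a'y=0$ then $u_{\l}\pi_{\l}(y)=0$ forces $\pi_{\l}(y)=0$ for every $\l$, whence $y\in\bigcap_{\l\in\L}K_{\l}=0$, so $r_{R}(a')=0$; symmetrically $l_{R}(a')=0$. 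For a regular ring this means $a'$ is simultaneously left and right invertible, hence a unit of $R$, and $a-a'=c\in S$ exhibits $a'$ as a lift of $\bar{a}$.

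The step I expect to need the most care is this last reassembly. It is essential that the corrections $c_{\l}$ have finite support, so that $c$ genuinely lies in the direct sum $S=\bigoplus_{\l}S_{\l}$ and not merely in the product $\prod_{\l}S_{\l}$; this is exactly what the finite support of $ab-1$ and $ba-1$ delivers. Equally, passing from ``$a'_{\l}$ is a unit of $R_{\l}$ for every $\l$'' back to ``$a'$ is a unit of $R$'' must not be conflated with invertibility in $\prod_{\l}R_{\l}$: it genuinely uses both $\bigcap_{\l}K_{\l}=0$ (to kill the one-sided annihilators of $a'$ in $R$ itself) and the regularity of $R$ (to upgrade vanishing annihilators to two-sided invertibility).
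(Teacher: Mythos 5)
Your proof is correct and takes essentially the same route as the paper's: both decompose along the primitive factors $R_{\l}$, observe that $a_{\l}$ is already a unit outside a finite subset of $\L$ because $ab-1$ and $ba-1$ have finite support in $S=\bigoplus_{\l}S_{\l}$, apply Lemma \ref{lem directfin} (via direct finiteness of the primitive factors) to correct the finitely many exceptional components modulo $S_{\l}$, and reassemble the corrections into a single element of $S$. Your final step, deducing invertibility of $a'$ in $R$ from vanishing one-sided annihilators plus regularity, is just a spelled-out version of the paper's closing remark that $a'\in R$ is a unit componentwise and hence has its inverse in $R$ by regularity.
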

\begin{proof}
Given $a\in R$, assume that $a+S$ is a unit of $R/S$. Then there are finite subset $\L',\L''\sbs\L$ such that $a_{\l}$ is a unit of $R_{\l}$ if $\l\in\L\setminus(\L'\cup\L'')$, while if $\l\in\L'$ (resp. $\l\in\L''$), then  $a_{\l}+S_{\l}$ is right (resp. left) invertible in $R_{\l}/S_{\l}$. We infer from Lemma \ref{lem directfin} that, for every $\l\in\L'\cup\L''$, there is a unit $b_{\l}\in R_{\l}$ such that $a_{\l}-b_{\l}\in S_{\l}$. Now the element $a'\in Q$ defined by
\[
a'_{\l} = \left\{
  \begin{array}{ll}
    a_{\l}, & \hbox{if $\l\in\L\setminus(\L'\cup\L'')$;} \\
    b_{\l}, & \hbox{if $\l\in\L'\cup\L''$}
  \end{array}
\right.
\]
is a unit and $a-a'\in S$. Necessarily $a'\in R$ and $a'$ has its inverse in $R$ by the regularity of $R$.
\end{proof}

\begin{pro}\label{pro eR oplus R iso R}
Let $e$ be an idempotent of a ring $R$. Then $eR\oplus R \is R$, as right $R$-modules, if and only if there are $a,b\in R$ such that $ab = 1$ and $(1-ba)R \is eR$.
\end{pro}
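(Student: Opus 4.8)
The plan is to reduce everything to the canonical identification $\End(R_R)\is R$, under which each right $R$-module endomorphism of $R$ is left multiplication $\l_{a}\colon x\mapsto ax$ by a unique $a\in R$, composition of endomorphisms corresponds to multiplication in $R$, idempotent endomorphisms (equivalently, internal direct-sum decompositions of $R_R$) correspond to idempotents of $R$, and the image of $\l_{a}$ is $aR$. In this language the relation $ab=1$ says exactly that $\l_{b}$ is a split monomorphism with chosen left inverse $\l_{a}$, while $ba$ is automatically the idempotent $(ba)^{2}=b(ab)a=ba$ whose associated summand I will have to identify.

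For the implication from right to left, suppose $ab=1$ and $(1-ba)R\is eR$. First I would record that $f\defug ba$ is idempotent, so that $R=fR\oplus(1-f)R=baR\oplus(1-ba)R$. Next, since $b=b(ab)=(ba)b\in baR$, one has $baR=bR$; and $\l_{b}$ is injective because $\l_{a}\l_{b}=\l_{ab}=\mathrm{id}$, so $\l_{b}$ is an isomorphism $R\is bR$. Combining these, $R=bR\oplus(1-ba)R\is R\oplus eR=eR\oplus R$, which is the assertion.

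For the converse, suppose $\theta\colon eR\oplus R\to R$ is an isomorphism, and set $P\defug\theta(eR\oplus 0)$ and $Q\defug\theta(0\oplus R)$, so $R=P\oplus Q$ with $P\is eR$ and $Q\is R$. I would then manufacture $a$ and $b$ from $\theta$ and this decomposition: let $\sigma\colon R\to R$ be the canonical isomorphism $R\is 0\oplus R$ followed by $\theta$ and the inclusion $Q\hookrightarrow R$, so that $\sigma=\l_{b}$ for $b\defug\sigma(1)$, with $\Imm\sigma=Q$; and let $\rho\colon R\to R$ be the projection $R\to Q$ along $P$ followed by $\sigma^{-1}\colon Q\to R$, so that $\rho=\l_{a}$ for $a\defug\rho(1)$. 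The point is the two composite computations: $\rho\circ\sigma=\mathrm{id}_{R}$, since the projection is the identity on $Q=\Imm\sigma$, which gives $\l_{ab}=\mathrm{id}$ and hence $ab=1$; and $\sigma\circ\rho$ equals the projection of $R$ onto $Q$ along $P$, which gives that $\l_{ba}$ is that projection. Consequently $\l_{1-ba}$ is the complementary projection onto $P$, whence $(1-ba)R=\Imm\l_{1-ba}=P\is eR$, as required.

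The main thing to watch---and the only place where the argument can go wrong through a side or order error---is the bookkeeping of the identification: endomorphisms of the right module $R_R$ must be written as left multiplications, $ab=1$ does not entail $ba=1$, and it is precisely the idempotent $ba$ (not $ab$) that encodes the projection onto the free summand $Q$. Once $\End(R_R)\is R$ is set up, both directions are a direct translation and require no structure theory; in particular neither regularity nor the semiartinian hypothesis is used, so the statement is valid for an arbitrary ring $R$.
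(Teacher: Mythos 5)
Your proof is correct, and both directions check out: the identification $\End(R_R)\is R$ via left multiplications is set up with the right composition rule $\l_a\circ\l_b=\l_{ab}$, the idempotent $ba$ is correctly identified with the projection onto $Q=\theta(0\oplus R)$ along $P=\theta(eR\oplus 0)$, and the converse via the Peirce decomposition $R=baR\oplus(1-ba)R=bR\oplus(1-ba)R$ with $\l_b\colon R\is bR$ is sound. Your route differs from the paper's in implementation, though the underlying algebra is the same. For the forward direction the paper writes the isomorphism and its inverse as $1\times 2$ and $2\times 1$ matrices, extracting four elements $a,b\in R$, $c\in eR$, $d\in Re$ with $dc+ba=1$, $ab=1$, $cd=e$, $cb=0=ad$, and then must construct by hand an isomorphism $\map{f}{(1-ba)R=dcR}{eR}$, $dcr\mapsto ecr$, including a well-definedness check (that $dcr=0$ forces $ecr=0$). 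You avoid introducing $c$ and $d$ altogether: by recognizing $\l_{ba}$ as the projection onto the free summand, you get the \emph{equality} $(1-ba)R=P$, where $P\is eR$ by construction, so no auxiliary map needs to be built or verified. For the converse the paper exhibits explicit mutually inverse matrices $\left(\begin{smallmatrix} e' & b \end{smallmatrix}\right)$ and $\left(\begin{smallmatrix} e' \\ a \end{smallmatrix}\right)$, while your internal-decomposition argument is marginally more elementary. What each buys: the paper's computation leaves behind concrete formulas (e.g. $1-ba=dc$ with $cd=e$) that make the isomorphism explicit and reusable, whereas your projection-based argument is shorter, conceptually transparent, and makes clear at a glance that no regularity or semiartinian hypothesis is involved --- consistent with the statement being asserted for an arbitrary ring.
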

\begin{proof}
If $eR\oplus R \is R$, then there are $a,b\in R$, $c\in eR$ and $d\in Re$ such that
\begin{equation}\label{equ dc+ba=1}
    \left(%
\begin{array}{cc}
  d & b \\
\end{array}%
\right)\left(%
\begin{array}{c}
  c \\
  a \\
\end{array}%
\right) = dc+ba=1_R
\end{equation}
and
\begin{equation}\label{equ cd+ba=1bis}
    \left(%
\begin{array}{c}
  c \\
  a \\
\end{array}%
\right)\left(%
\begin{array}{cc}
  d & b \\
\end{array}%
\right) =
\left(%
\begin{array}{cc}
  cd & cb \\
  ad & ab \\
\end{array}%
\right) =
\left(%
\begin{array}{cc}
  e & 0 \\
  0 & 1 \\
\end{array}%
\right) = 1_{eR\oplus R},
\end{equation}
therefore $ab = 1$, $cd = e$ and $cb = 0 = ad$. There is a well defined $R$-linear map $\map{f}{dcR=(1-ba)R}{eR}$ such that $f(dcr) = ecr$ for all $r\in R$; indeed, if $dcr = 0$, then it follows from \eqref{equ dc+ba=1} that $r = bar$, therefore $ecr = ecbar = 0$. If $ecr = 0$, then $dcr = decr = 0$ and therefore $f$ is injective. On the other hand we have
\[
er = cdr = ecdr = f(dcdr)
\]
for all $r\in R$, proving that $f$ is surjective and hence an isomorphism.

Conversely, suppose that $a,b\in R$ exist such that $ab = 1$, consider the idempotent $e' = 1-ba$ and assume that $e'R \is eR$. Then it is immediate to check that each of the matrices
$\displaystyle\left(
                \begin{array}{cc}
                  e' & b \\
                \end{array}
              \right)$ and
$\displaystyle\left(
                \begin{array}{c}
                  e' \\
                  a \\
                \end{array}
              \right)$
is the inverse of the other. As a result we have the isomorphisms
\[
eR\oplus R \is e'R\oplus R \is R.
\]
\end{proof}

\begin{lem}\label{lem decompsimple}
Let $R$ be a regular and semiartinian ring and let us consider an idempotent $x\in R$. If $\a+1 = h(x)$, then there are pairwise orthogonal idempotents $x_{1},\ldots,x_{n}\in xRx$ such that $x = x_{1}+\cdots+x_{n}$ and  $x_{i}R/x_{i}L_{\a}$ is simple for every $i = 1,\ldots,n$.
\end{lem}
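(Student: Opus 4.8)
The plan is to reduce everything to the lifting of a complete orthogonal family of idempotents modulo $L_{\a}$, performed inside the corner ring $xRx$.

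First I would record the key computation in the quotient $\bar{R}\defug R/L_{\a}$. Writing $\bar{x}=x+L_{\a}$, the idempotency of $x$ gives $xR\cap L_{\a}=xL_{\a}$ (if $y=xr\in L_{\a}$ then $y=xy\in xL_{\a}$), whence $\bar{x}\bar{R}\is xR/xL_{\a}$. Since $xL_{\a}=\Soc_{\a}(xR)$ and $h(x)=\a+1$ is the Loewy length of $xR$, the module $xR/xL_{\a}=\Soc_{\a+1}(xR)/\Soc_{\a}(xR)$ is the top Loewy layer of $xR$: it is a nonzero, cyclic, semisimple right $\bar{R}$-module, hence of finite length $n\ge 1$. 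Decomposing it into $n$ simple submodules yields pairwise orthogonal idempotents $\bar{e}_{1},\ldots,\bar{e}_{n}$ of the (semisimple artinian) corner ring $\bar{x}\bar{R}\bar{x}\is\End(\bar{x}\bar{R})$, with $\bar{e}_{1}+\cdots+\bar{e}_{n}=\bar{x}$ and each $\bar{e}_{i}\bar{R}$ simple.

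Next I would transport this decomposition to $A\defug xRx$. This corner ring is again regular, its identity is $x$, and $J\defug A\cap L_{\a}=xL_{\a}x$ is a two-sided ideal with $A/J\is\bar{x}\bar{R}\bar{x}$. Because $A$ is regular, idempotents lift modulo $J$, and the real work is to lift the complete orthogonal family $\bar{x}=\bar{e}_{1}+\cdots+\bar{e}_{n}$ to a complete orthogonal family $x=x_{1}+\cdots+x_{n}$ inside $A$. I would argue by induction on $n$: lift $\bar{e}_{1}$ to an idempotent $x_{1}\in A$, then replace $A$ by the corner $(x-x_{1})R(x-x_{1})$ --- which is still regular, with identity $x-x_{1}$ --- and apply the inductive hypothesis to the family $\overline{x-x_{1}}=\bar{e}_{2}+\cdots+\bar{e}_{n}$. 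The idempotents $x_{2},\ldots,x_{n}$ produced in that smaller corner are automatically orthogonal to $x_{1}$ and sum to $x-x_{1}$, so altogether $x_{1},\ldots,x_{n}$ are pairwise orthogonal, lie in $xRx$, and satisfy $x_{1}+\cdots+x_{n}=x$.

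Finally, since each $x_{i}$ is an idempotent with $\overline{x_{i}}=\bar{e}_{i}$, the opening computation applied to $x_{i}$ gives $x_{i}R\cap L_{\a}=x_{i}L_{\a}$ and hence $x_{i}R/x_{i}L_{\a}\is\bar{e}_{i}\bar{R}$, which is simple, as required. I expect the main obstacle to be exactly the orthogonal lifting that sums to $x$ rather than merely to an idempotent congruent to $x$ modulo $L_{\a}$: the point is to take the final idempotent as the complement $x-(x_{1}+\cdots+x_{n-1})$, and this is precisely where one needs regularity not just of $R$ but of every corner that arises, so that idempotents keep lifting modulo the relevant ideals \cite{Good:3}.
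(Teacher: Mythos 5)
Your proposal is correct and follows essentially the same route as the paper: decompose the semisimple top layer $xR/xL_{\a}$ into simples to get a complete orthogonal family of idempotents in $R/L_{\a}$, then lift it to a complete orthogonal family in $xRx$ summing to $x$. The inductive corner-ring lifting you carry out by hand is precisely the content of \cite[Proposition 2.18]{Good:3}, which the paper simply cites at that step.
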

\begin{proof}
Since $xR/xL_{\a}$ is a semisimple right ideal of $R/L_{\a}$, then there are pairwise orthogonal idempotents $y_{1},\ldots,y_{n}\in R/L_{\a}$ such that $x+L_{\a} = y_{1}+\cdots+y_{n}$ and each $y_{i}(R/L_{\a})$ is simple. It follows from \cite[Proposition 2.18]{Good:3} that there are pairwise orthogonal idempotents $x_{1},\ldots,x_{n}\in xRx$ such that $x = x_{1}+\cdots+x_{n}$ and $y_{i} = x_{i}+L_{\a}$ for every $i$.
\end{proof}

We are now ready to prove the main theorem.

\begin{theo}\label{theo main}
Let $R$ be a regular and semiartinian ring with Loewy length $\xi+1$. Then the following conditions are equivalent:\begin{enumerate}
             \item $R$ is unit-regular.
             \item Every factor ring of $R$ is directly-finite.
             \item $\KO(R)$ is free with a simp-basis.
           \end{enumerate}
\end{theo}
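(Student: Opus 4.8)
The plan is to prove the theorem by establishing the cycle of implications $(1)\Rightarrow(2)\Rightarrow(3)\Rightarrow(1)$, using Loewy-length induction for the two harder implications and the preparatory lemmas for the direct-finiteness bookkeeping.

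For $(1)\Rightarrow(2)$ I would argue that unit-regularity passes to factor rings: if $R$ is unit-regular and $H$ is any ideal, then $R/H$ is again regular and unit-regular (unit-regularity is preserved under homomorphic images for regular rings), and every unit-regular ring is directly finite. This implication is essentially immediate from standard theory in \cite{Good:3}, so I would dispatch it quickly.

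For $(3)\Rightarrow(1)$ I would proceed by transfinite induction on the Loewy length $\xi+1$. The base case $\xi=0$ makes $R$ semisimple, hence unit-regular. For the inductive step I would exploit Proposition and Definition \ref{pro simpgenKO}(2): if $[\ZA]$ is a simp-basis for $\KO(R)$, then \eqref{eq factorbasis} gives a simp-basis for $\KO(R/L_{\a})$ for each $\a$, so by induction every proper factor $R/L_{\a}$ (with $\a\ge 1$) is unit-regular. The crux is then to pass from unit-regularity of the factors to unit-regularity of $R$ itself. I would localize the problem using the primitive quotients: it suffices to show that cancellation $A\oplus R\is B\oplus R\Rightarrow A\is B$ holds in $\FP_R$, and freeness of $\KO(R)$ with the simp-basis forces $[A]=[B]$, i.e. $A\oplus R^n\is B\oplus R^n$; the obstruction to upgrading stable isomorphism to genuine isomorphism is exactly a failure of direct finiteness, which Proposition \ref{pro eR oplus R iso R} detects via the existence of $a,b$ with $ab=1$ but $(1-ba)R\ne 0$. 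Here I expect Lemma \ref{lem decompsimple} to be the tool that reduces an arbitrary idempotent to eventually-simple pieces matching the simp-basis, so that a nontrivial such $(1-ba)R$ would contribute a nonzero relation among basis elements, contradicting freeness.

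The implication $(2)\Rightarrow(3)$ is where I expect the main obstacle, and I would again use transfinite induction on Loewy length together with the socle-filtration lemmas. Assuming every factor ring of $R$ is directly finite, every factor ring of each $R/L_{\a}$ is a factor ring of $R$ and hence directly finite, so by induction $\KO(R/L_{\a})$ has a simp-basis for $\a\ge 1$; by Proposition and Definition \ref{pro simpgenKO}(1) the set $[\ZA]$ is always simp-generating, so the task is precisely to prove \emph{independence} of $[\ZA]$. The key new input is Lemma \ref{lem directfin} together with Lemma \ref{lem directfin1}: direct finiteness of all factor rings guarantees, at each primitive quotient $R_{\l}=R/K_{\l}$, that units of $R_{\l}/S_{\l}$ lift to units of $R_{\l}$, which via Lemma \ref{invertmodL 2} translates into the dimension equality $\dim(\Ker a)=\dim(\Coker a)$ controlling when stable isomorphisms are genuine. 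The hard part will be assembling these local (per-$\l$) cancellation statements into a global independence statement for the simp-basis: concretely, given a relation $\sum k_i[A_i]=0$ with the $A_i$ of maximal height $\a=\b+1$, I would reduce modulo $L_{\b}$ exactly as in the proof of \ref{pro simpgenKO}(3) to kill all coefficients of height-$\a$ generators using the inductive simp-basis of $R/L_{\b}$, and then confront the genuinely new case where a stable isomorphism $A\oplus R^n\is B\oplus R^n$ among height-$(\b+1)$ projectives must be improved to $A\is B$. It is in this last improvement that direct finiteness of $R$ (not merely its proper factors) must be invoked, and I expect the technical heart of the argument to be showing that $\FP_R$ enjoys the cancellation property $A\oplus R\is B\oplus R\Rightarrow A\is B$ — equivalently, via Proposition \ref{pro eR oplus R iso R}, that $ab=1$ forces $ba=1$ up to the socle structure — so that stable isomorphism coincides with isomorphism and the simp-generating set becomes a genuine basis.
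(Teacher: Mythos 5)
Your plan contains a genuine gap, and it occurs at the same spot in both of your hard implications: you repeatedly identify the cancellation property for finitely generated projectives with direct finiteness. For a regular ring, the cancellation $A\oplus R\is B\oplus R\Rightarrow A\is B$ is well known to be \emph{equivalent} to unit-regularity (see \cite{Good:3}, Chapter 4), whereas direct finiteness is strictly weaker, even for semiartinian regular rings of small Loewy length; this strictness is exactly why condition (2) of the theorem quantifies over \emph{all} factor rings and why Goodearl's Problem 3 is nontrivial. Proposition \ref{pro eR oplus R iso R} detects only the special instance $eR\oplus R\is R$, not a general failure of cancellation $A\oplus R\is B\oplus R$ with both $A,B$ nonzero. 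Consequently, in your $(3)\Rightarrow(1)$ the assertion that ``the obstruction to upgrading stable isomorphism to genuine isomorphism is exactly a failure of direct finiteness'' is false, and in your $(2)\Rightarrow(3)$ the ``technical heart'' you defer to --- deriving cancellation in $\FP_{R}$ from direct finiteness --- is precisely the content of the theorem itself, so the argument is circular. There is also a local hole in your independence scheme for $(2)\Rightarrow(3)$: after reducing a relation modulo $L_{\b}$ to kill the coefficients of the top-height generators, the surviving relation involves only classes lying in $\Ker(\KO(\f_{\b}))$, whose images in $\KO(R/L_{\b})$ vanish, so the inductive simp-basis of the factor ring gives no information about them. (In the proof of Proposition \ref{pro simpgenKO}(3) the analogous induction succeeds only because a simp-basis of $\KO(R)$ itself is already available as a backstop; you have no such backstop.)

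The paper orients the cycle as $(1)\Rightarrow(3)\Rightarrow(2)\Rightarrow(1)$ precisely to avoid these traps. The step $(1)\Rightarrow(3)$ is quoted from \cite{BacCiamp:14}. In $(3)\Rightarrow(2)$, direct finiteness of $R$ is derived from the simp-basis via Proposition \ref{pro eR oplus R iso R} and Lemma \ref{lem decompsimple} --- this much you did anticipate correctly --- but the essential further move, absent from your plan, is to show that for an \emph{arbitrary} ideal $J$ the factor $R/J$ again has a simp-basis: one replaces each $A\in\ZA$ whose associated simple $U$ satisfies $UJ=U$ by a module with $A=AJ$, and proves $\Ker(\KO(\f))=\langle[\ZA_{J}]\rangle$ by transfinite induction; your plan controls only the Loewy factors $R/L_{\a}$, while (2) quantifies over all ideals. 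Finally, $(2)\Rightarrow(1)$ is where the real work happens: after the unit-lifting along the Loewy chain (Lemmas \ref{invertmodL 1}--\ref{lem directfin1}, which you did identify), the paper invokes Vasershtein's criterion to reduce unit-regularity of $R$ to unit-regularity of the corner rings $eRe$ with $e\in L_{\xi}$; these have smaller Loewy length and inherit hypothesis (2) because $eRe/eIe\is(e+I)(R/I)(e+I)$, so a transfinite induction on Loewy length closes the argument. Without this Vasershtein/corner-ring reduction there is no passage from the local unit-lifting statements to unit-regularity, and your proposal supplies no substitute for it.
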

\begin{proof}
(1)$\Rightarrow$(3): see \cite[Theorem 1, (ii)]{BacCiamp:14}.

(3)$\Rightarrow$(2). Assume (3) and let us prove first that, consequently, $R$ is directly finite. Let $[\ZA] = \{[A]\mid A\in\ZA\}$ be a simp-basis for $\KO(R)$, for some $\ZA\sbs\ESFP(R_R)$, let $a,b\in R$ be such that $ab = 1$, set $u = 1-ba = u^{2}$ and assume that $u\ne 0$. Clearly $a,b\in R\setminus L_{\xi}$ and, since $R/L_{\xi}$ is semisimple and hence directly finite, then $u\in L_{\xi}$. According to Proposition \ref{pro eR oplus R iso R} we have that $[uR] = 0$ in $\KO(R)$. On the other hand, by setting $\a+1 = h(u)$, we see that there are distinct $A_{1},\ldots,A_{n}\in\ZA_{\a}$ and positive integers $k_{1},\ldots,k_{n}$ such that
\[
(uR/uL_{\a}) \is (A_{1}/A_{1}L_{\a})^{k_{1}} \oplus\cdots\oplus (A_{n}/A_{n}L_{\a})^{k_{n}}.
\]
As a result we have that
\[
0 = k_{1}[A_{1}/A_{1}L_{\a}]+\cdots+k_{n}[A_{n}/A_{n}L_{\a}]
\]
in $\KO(R/L_{\a})$ and hence a contradiction, because $[A_{1}/A_{1}L_{\a}],\ldots,[A_{n}/A_{n}L_{\a}]$ are $\BZ$-linearly independent by Proposition \ref{pro simpgenKO}, (2). Thus $ba = 1$, proving that $R$ is directly finite. As a consequence, again by Proposition \ref{pro simpgenKO}, (2) we have that the ring $R/L_{\a}$ is directly finite for every ordinal $\a\le\xi+1$.

Now, given any ideal $J\ne 0$ of $R$, our goal is to show that the group $\KO(R/J)$ has a simp-basis; it will follow that $R/J$ is directly finite by the above. To this purpose, we may assume that the simp-basis $[\ZA]$ for $\KO(R)$ satisfies the following property: if $U\in\Simp_{R}$ and $UJ = U$, then the unique $A\in\ZA$ such that $A/AL_{h(A)-1}\is U$ satisfies the condition $A = AJ$. In fact, we have that $A/AL_{h(A)-1} = (A/AL_{h(A)-1})J$ if and only if $A = AJ+AL_{h(A)-1}$. If it is the case and $A\ne AJ$, then we can choose $x\in AJ$ in such a way that $xR/xL_{h(A)-1} \is U$. Thus $xR\in\ESFP_{R}$, $xR = xJ$ and we replace $A$ with $xR$. After all such replacements we obtain a new subset $\ZB\sbs\ESFP_{R}$ such that $[B]$ is a simp-basis for $\KO(R)$, according to (1) and (3) of Proposition \ref{pro simpgenKO}, which satisfies the required property.

Let $\map{\f}{R}{R/J}$ be the canonical projection and note that the set
\[
[\ZA_{J}] \defug \{[A]\mid A\in\ZA\text{ and }A = AJ\}
\]
cannot be empty, otherwise $J$ would annihilate every $U\in\Simp_{R}$. We claim that
\begin{equation}\label{eq simpbasisfactor}
\langle[\ZA_{J}]\rangle = \Ker(\KO(\f)).
\end{equation}
 By \cite[Proposition 15.15]{Good:3} the first member is contained in the second. By taking Proposition \ref{pro simpgenKO}, (1) into account, in order to prove the opposite inclusion it will be sufficient to show that, given an ordinal $\a$, the inclusion
\[
(**)_{\a}\hspace{27mm}\Ker(\KO(\f))\cap\Ker(\KO(\f_{\a})) \sbs \langle[\ZA_{J}]\rangle \hspace{35,5mm}
\]
holds. Since $(**)_{0}$ is obviously true, let $\a>0$ and assume that $(**)_{\b}$ is true for all $\b<\a$. If $\a$ is a limit ordinal, then $(**)_{\a}$ easily follows from Proposition \ref{pro simpgenKO}, (1. Suppose that $\a = \b+1$ for some $\b$. According again to \cite[Proposition 15.15]{Good:3}, all we have to show is that if $x = x^{2}\in J\cap L_{\a} = JL_{\a}$, then $[xR]\in\langle[\ZA_{J}]\rangle$. Set $X = xR$ and suppose first that $X/XL_{\b}$ is simple. Then there is a unique $A\in\ZA$ such that $X/XL_{\b} \is A/AL_{\b}$ and, since $X/XL_{\b} = (X/XL_{\b})J$, we have that $A = AJ$ by the assumption on $[\ZA]$. It follows from \cite[Proposition 2.19]{Good:3} that there are decompositions $X = X'\oplus X''$ and $A = A'\oplus A''$ such that $X'\is A'$, $X'' = X''L_{\b}$ and $A'' = A''L_{\b}$. Consequently $A' = A'J$ and $X/XL_{\b} \is X'/X'L_{\b} \is A'/A'L_{\b}$, therefore
\[
\KO(\f_{\b})([X] - [A']) = [X/XL_{\b}] - [A'/A'L_{\b}] = 0
\]
and so $[X] - [A']\in \Ker(\KO(\f))\cap\Ker(\KO(\f_{\b}))$. We infer from the inductive hypothesis that $[X] - [A']\in \langle[\ZA_{J}]\rangle$. Inasmuch as  $A = AJ$, then $A'' = A''J$ as well and hence $[A'']\in\Ker(\KO(\f))\cap\Ker(\KO(\f_{\b}))$,
so that $[A'']\in\langle[\ZA_{J}]\rangle$ again by the inductive hypothesis. It follows that $[A'] = [A]-[A'']\in\langle[\ZA_{J}]\rangle$ and we conclude that $[X]\in\langle[\ZA_{J}]\rangle$. Next, assume that $X/XL_{\b}$ is not simple. Then it follows from Lemma \ref{lem decompsimple} that there is a decomposition
\[
X = X_{1}\oplus\cdots\oplus X_{n},
\]
where each $X_{i}/X_{i}L_{\b}$ is simple. As a result
\[
[X] = [X_{1}]+\cdots+[X_{m}]\in\langle[\ZA_{J}]\rangle,
\]
as wanted. Since our claim \eqref{eq simpbasisfactor} is now proved, we may state that the set
\[
\{[A/AJ]\mid A\in\ZA\text{ and }A\ne AJ\}
\]
is a simp-basis for $\KO(R/J)\is\KO(R)/\Ker(\KO(\f))$.

(2)$\Rightarrow$(1). Assume (2) and, as a first step, let us prove that every unit of $R/L_{\a}$ lifts to a unit of $R$, for every ordinal $\a\le\xi$. This is obvious if $\a = 0$, thus, suppose that $0<\a$, assume that every unit of $R/L_{\b}$ lifts to a unit of $R$ whenever $\b<\a$ and let $x+L_{\a}$ be a unit of $R/L_{\a}$. If $\a = \b+1$ for some $\b$, then $(x+L_{\b})+L_{\a}/L_{\b}$ is a unit of $(R/L_{\b})/(L_{\a}/L_{\b})$. By the assumption (2) every factor ring of $R/L_{\b}$ is directly finite, thus it follows from Lemma \ref{lem directfin1} that there exists a unit $a+L_{\b}$ of $R/L_{\b}$ such that $(x+L_{\b})+L_{\a}/L_{\b} = (a+L_{\b})+L_{\a}/L_{\b}$. By the inductive hypothesis there is a unit $b$ of $R$ such that $a+L_{\b} = b+L_{\b}$; consequently $(x+L_{\b})+L_{\a}/L_{\b} = (b+L_{\b})+L_{\a}/L_{\b}$ and therefore $x+L_{\a} = b+L_{\a}$, as wanted. Next, suppose that $\a$ is a limit ordinal and let $y\in R$ be such that both $xy-1$ and $yx-1$ belong to $L_{\a}$. If $\b = \max(h(xy-1),h(yx-1))$, then $\b<\a$ and $x+L_{\b}$ is a unit of $R/L_{\b}$. Again from the inductive hypothesis we have that $x+L_{\b} = b+L_{\b}$ for some unit $b$ of $R$; thus $x+L_{\a} = b+L_{\a}$ as well and the induction is complete.

We now proceed to prove (1) by induction on the ordinal $\xi$. If $\xi = 0$, then $R$ is semisimple and so it is unit-regular. Given an ordinal $\xi>0$, suppose that the implication (2)$\Rightarrow$(1) holds whenever $R$ has Loewy length less than $\xi+1$ and let $R$ be a regular and semiartinian ring with Loewy length $\xi+1$ and having all factor rings directly-finite. As we have shown above, every unit of $R/L_{\xi}$ lifts to a unit of $R$; since $R/L_{\xi}$ is semisimple and hence unit-regular, according to Vasershtein criterion (see \cite[Proposition 4.12]{Vasershtein:1}, or \cite[Lemma 3.5]{Bac:12} for a ready-to-use version), in order to prove that $R$ is unit-regular it is sufficient to show that the ring $eRe$ is unit-regular for every idempotent $e\in L_{\xi}$. First recall that if $R$ is any regular and directly finite ring, then so is $eRe$ for every idempotent $e\in R$. Next, given an idempotent $e\in L_{\xi}$, it follows from \cite[Proposition 1.5]{Bac:15} that $eRe$ is semiartinian with Loewy length at most $\xi$. If $J$ is any ideal of $eRe$ and $I = RJR$, then $J = eIe$ and there is a ring isomorphism $eRe/J\is (e+I)(R/I)(e+I)$. Consequently $eRe/J$ is directly finite, because so is $R/I$ by the assumption. By the inductive hypothesis $eRe$ is unit-regular and the proof is complete.
\end{proof}

\section{The natural partial order of $\Simp_{R}$ versus the natural preorder of $\KO(R)$.}

The Grothendieck group $\KO(R)$ has a canonical structure of a pre-ordered abelian group, where the positive cone is given by the submonoid generated by the set $\langle [P]\mid \text{$P\in \FP_{R}$}\rangle$. Inasmuch as $R$ is a regular and semiartinian ring, then the assignment $U\mapsto r_{R}(U)$ defines a bijection from $\Simp_{R}$ to the set $\Prim_{R}$ of right primitive ideals of $R$, so the ordering by inclusion of $\Prim_{R}$ induces what we called in \cite{Bac:15} the \emph{natural partial order} of $\Simp_{R}$; precisely, if $U,V\in\Simp_{R}$, we set $V\preccurlyeq U$ precisely when $r_{R}(V)\sbs r_{R}(U)$. It appears quite natural to ask about any relationship between the above natural pre-order of $\KO(R)$ and the natural partial order of $\Simp_{R}$; in this section we will give some partial answers to that question. In view of our goals, it will be convenient to have at disposal a couple of new concepts.

Let us consider an element $A\in\ESFP_{R}$, set $\a+1 = h(A)$ and let us consider the simple module $U = A/AL_{\a}$\,. Given any $V\in\Simp_{R}$ with $h(V) = \b+1<\a+1$, it follows from \cite[Theorem 2.1]{Bac:16} \footnote{We take here the opportunity to remark that a mistake has been left to stand in the final and published version of the quoted paper. Precisely, in the quoted theorem and its proof ten instances appear of a factor module of the form $(***+L_{\b+1})/L_{\b}$, which should be read as $(***+L_{\b})/L_{\b}$ instead, as one can easily infer from the context.} that $V\prec U$ if and only if $A/AL_{\b}$ contains an infinite direct sum of copies of $V$. Let us say that $A$ is \emph{order-selective} if, for every $V\in\Simp_{R}$, the property $V\lesssim A/AL_{h(V)-1}$ is equivalent to $V\preccurlyeq U$. Note that $A$ is order-selective if and only if, given $C\in\ESFP_{R}$, the property $C\lesssim A$ implies that $C/CL_{h(C)-1}\preccurlyeq U$.

\begin{lem}\label{lem extendedtrace}
Let $R$ be a regular and semiartinian ring, let $A\in\ESFP_{R}$, $B\in\FP_{R}$ be such that $h(B)<h(A)$, assume that $A$ is order-selective and the following condition holds:
\begin{enumerate}
  \item If $S\in\Simp_{R}$, the condition $S\lesssim B/BL_{h(S)-1}$ implies $S\prec A/AL_{h(A)-1}$.
\end{enumerate}
Then $B\lesssim A$.
\end{lem}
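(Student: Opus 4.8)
The plan is to prove $B \lesssim A$ by transfinite induction on $h(B)$, reducing at each stage to the socle layer where the relevant simple modules live and invoking the order-selectivity of $A$ to guarantee enough copies of each simple constituent of $B$ appear inside the corresponding layer of $A$. The key structural fact is that, by Proposition~\ref{pro simpgenKO} and the analysis of the Loewy filtration, a finitely generated projective module $B$ is built up layer by layer from its semisimple quotients $B L_{\gamma+1}/B L_\gamma$, each of which decomposes into simple modules $S$ with $h(S) = \gamma+1$; condition~(1) tells us precisely that every such $S$ occurring in $B$ satisfies $S \prec U$, where $U = A/AL_{h(A)-1}$.

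First I would set $\alpha+1 = h(A)$ and $\beta+1 = h(B)$, so that $\beta < \alpha$ by hypothesis. The base of the induction is the socle: the simple submodules of $B$ lying in $BL_1$ are direct summands $S$ with $h(S)=1$, and by~(1) each satisfies $S \prec U$; by order-selectivity of $A$ (in the form $S \prec U \Rightarrow S \lesssim A/AL_{h(S)-1} = A/AL_0 = A$ ... more precisely $S \lesssim A$ at the socle level), each such $S$ embeds as a direct summand of $A$, and since the socle of $A$ is a direct sum we can accommodate all of them simultaneously up to the multiplicities present in $\Soc(B)$. The inductive step handles a module $B$ with $h(B) = \beta+1$: I would write $B/BL_\beta$ as a finite direct sum of simple modules $S_1, \dots, S_m$ with $h(S_i) = \beta+1$, lift an idempotent decomposition (via \cite[Proposition 2.18]{Good:3} as in Lemma~\ref{lem decompsimple}) to split off summands $B_i$ of $B$ with $B_i/B_i L_\beta \cong S_i$, and argue that each $B_i \lesssim A$; assembling these requires that the images land in independent summands of $A$, which is where order-selectivity does the real work by ensuring $S_i \lesssim A/AL_{h(S_i)-1}$ for the top layer while the inductive hypothesis covers the lower part $B_i L_\beta$.

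\textbf{The main obstacle} I anticipate is the bookkeeping of multiplicities and the simultaneous realization of several embeddings into a single copy of $A$, rather than into separate copies. Order-selectivity as stated guarantees $S \lesssim A/AL_{h(S)-1}$ for each individual simple constituent, but to conclude $B \lesssim A$ we need these embeddings to be \emph{compatible}, i.e. to fit inside one $A$ with disjoint supports across the Loewy layers. The natural device is to use the refinement and decomposition properties available in regular semiartinian rings (Riesz decomposition in $\CV(R)$, together with \cite[Proposition 2.19]{Good:3} as used in Theorem~\ref{theo main}) to peel off from $A$ a summand isomorphic to $B_i$ at each layer while leaving the remainder of $A$ with its top layer still dominating the as-yet-unplaced constituents of $B$. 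Concretely, I would prove the stronger inductive statement that $A$ admits a decomposition $A \cong B \oplus A'$ with $h(A') \le h(A)$ and $A'/A'L_{h(A')-1}$ still isomorphic to $U$ (so that order-selectivity remains available for $A'$ at the next stage), which makes the layer-by-layer stripping self-sustaining.

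Finally, once the inductive statement is established for all layers up to $\beta+1 = h(B)$, the conclusion $B \lesssim A$ follows immediately since $B$ is exhausted by its filtration $0 \subseteq BL_1 \subseteq \cdots \subseteq BL_{\beta+1} = B$, each successive quotient having been embedded into a fresh summand split off from $A$. The crucial hypotheses are used exactly as follows: $h(B) < h(A)$ ensures we never run out of ``room'' in the top layer of $A$ (which has strictly greater height than any constituent of $B$, so $U$ strictly dominates every $S$ occurring in $B$), condition~(1) supplies $S \prec U$ for every constituent, and order-selectivity converts each such strict inequality in $\Simp_R$ into an actual submodule embedding $S \lesssim A/AL_{h(S)-1}$, which is what the whole argument needs at every step.
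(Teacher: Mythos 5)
Your skeleton is essentially the paper's: induction on $h(B)$, matching the top Loewy layer of $B$ inside $A$, lifting the relation $B/BL_{\beta}\lesssim A/AL_{\beta}$ to decompositions $A = A'\oplus A''$, $B = B'\oplus B''$ with $B'\is A'$ and $B'' = B''L_{\beta}$ (via \cite[Proposition 2.19]{Good:3}), and observing that the complement retains top $\is U$ while hypothesis (1) passes down to the pair $(B'',A'')$. But there is a genuine gap at exactly the point you flag as ``the main obstacle'': placing the whole top layer $B/BL_{\beta}\is S_{1}^{k_{1}}\oplus\cdots\oplus S_{n}^{k_{n}}$, multiplicities included, simultaneously inside $A/AL_{\beta}$. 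The paper does this in one stroke with an ingredient your proposal never invokes: by \cite[Theorem 2.1]{Bac:16} (quoted immediately before the lemma), the \emph{strict} relation $S\prec U$ supplied by hypothesis (1) is equivalent to $A/AL_{h(S)-1}$ containing an \emph{infinite} direct sum of copies of $S$. Infinitely many copies of each $S_{i}$ yield $S_{1}^{k_{1}}\oplus\cdots\oplus S_{n}^{k_{n}}\lesssim A/AL_{\beta}$ at once (a finitely generated submodule of a projective module over a regular ring is a summand), so there is no assembly problem to solve at all.

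Your substitute engine --- single-copy embeddings from order-selectivity, made ``self-sustaining'' by the strengthened claim that the complement $A'$ keeps top $U$ and remains order-selective --- breaks precisely at the verification that $A'$ is order-selective in the direction you need. Order-selectivity of $A$ gives one copy $S\lesssim A/AL_{h(S)-1}\is \left(A'/A'L_{h(S)-1}\right)\oplus D$, where $D$ comes from the already-stripped summand; that copy may lie entirely in $D$, and neither Riesz refinement nor \cite[Proposition 2.19]{Good:3} manufactures a fresh copy of a simple module inside $A'$. The only way to re-supply copies --- and indeed the way the paper's own remark that the one-sided form of order-selectivity implies the two-sided form is justified, hence the way its claim that $A''$ is order-selective is grounded --- is again the infinite-copies characterization of $\prec$. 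So the missing idea is exactly \cite[Theorem 2.1]{Bac:16}: with it your stripping scheme becomes unnecessary (the whole layer embeds in one step), and without it the scheme cannot close. A secondary slip: your closing description of exhausting $B$ bottom-up along $0\sbs BL_{1}\sbs\cdots\sbs BL_{\beta+1}$ is not workable as stated, since $BL_{\gamma}$ is in general neither finitely generated nor a direct summand of $B$; the viable induction is the top-down one of your middle paragraph, which is the paper's.
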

\begin{proof}
Since the thesis is obvious if $B = 0$, we may assume that $B\ne 0$. We have that $B/BL_{h(B)-1} \is S_{1}^{k_{1}}\oplus\cdots\oplus S_{n}^{k_{n}}$ for some $S_{1},\ldots, S_{n}\in\Simp_{R}$, with $h(S_{1}) = \cdots = h(S_{n}) = h(B)$ and positive integers $k_{1},\ldots,k_{n}$. Then $S_{i}\prec A/AL_{h(A)-1}$ for all $i = 1,\ldots,n$ by the assumption (1) and therefore $B/BL_{h(B)-1}\lesssim A/AL_{h(B)-1}$ by Theorem 2.1, (2) of \cite{Bac:16}. As a result, the thesis is true in case $h(B) = 1$. Set $h(A) = \a+1$ and, given an ordinal $\b$ such that $0<\b<\a$, assume that the thesis is true whenever $h(B)<\b+1$ and suppose that $h(B)=\b+1$. As we noted above, we have that $B/BL_{\b}\lesssim A/AL_{\b}$, thus there are two decompositions
\[
A = A'\oplus A'', \qquad B = B'\oplus B''
\]
such that $B'\is A'$ and $B'' = B''L_{\b}$. Note that $A''\in\ESFP_{R}$, because $h(A'') = \a+1$ and $A''/A''L_{\a}\is A/AL_{\a}$. Moreover $A''$ is order-selective. Indeed, suppose that $C\lesssim A''$ for some $C\in\ESFP_{R}$. Then $C\lesssim A$ and therefore $C/CL_{h(C)-1}\preccurlyeq A/AL_{\a} \is (A'/A'L_{\a})\oplus(A''/A''L_{\a}) = A''/A''L_{h(A'')-1}$. In addition we have that $h(B'')<\b+1<\a+1= h(A'')$ and the property (1) holds relative to $B''$ and $A''$. Indeed, suppose that $S\lesssim B''/B''L_{h(S)-1}$ for some $S\in\Simp_{R}$. Then $S\lesssim B/BL_{h(S)-1}$ and so $S\prec A/AL_{\a}\is A''/A''L_{h(A'')-1}$. The inductive assumption implies now that $B''\lesssim A''$ and we conclude that $B\lesssim A$.
\end{proof}

The following corollaries will play a crucial role when we will describe in an explicit way the positive cone $\KO(R)^{+}$ of $\KO(R)$ in the next section.

\begin{cor}\label{cor orderselect1}
Let $R$ be a semiartinian and regular ring, let $A,B\in\ESFP_{R}$ be order-selective and suppose that $h(B)<h(A)$. Then $B\lesssim A$ if and only if $B/BL_{h(B)-1}\preccurlyeq A/AL_{h(A)-1}$.
\end{cor}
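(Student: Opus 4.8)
The plan is to prove Corollary \ref{cor orderselect1} by splitting it into its two implications and leaning on the order-selectivity hypothesis together with the preceding Lemma \ref{lem extendedtrace}. The forward direction, namely that $B\lesssim A$ implies $B/BL_{h(B)-1}\preccurlyeq A/AL_{h(A)-1}$, should be immediate from the definition of order-selectivity of $A$. Indeed, recall that just before the lemma it was observed that $A$ is order-selective if and only if, for every $C\in\ESFP_{R}$, the property $C\lesssim A$ forces $C/CL_{h(C)-1}\preccurlyeq A/AL_{h(A)-1}$. Since $B$ itself lies in $\ESFP_{R}$ and we are assuming $B\lesssim A$, applying this characterization with $C = B$ gives exactly $B/BL_{h(B)-1}\preccurlyeq A/AL_{h(A)-1}$. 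So the forward implication costs us essentially nothing beyond quoting the reformulation of order-selectivity.

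For the converse direction I would assume $B/BL_{h(B)-1}\preccurlyeq A/AL_{h(A)-1}$ and aim to verify the hypotheses of Lemma \ref{lem extendedtrace}, which will then yield $B\lesssim A$ directly. Writing $U = A/AL_{h(A)-1}$, the lemma requires that $h(B)<h(A)$ (granted by assumption) and that condition (1) hold: for every $S\in\Simp_{R}$, if $S\lesssim B/BL_{h(S)-1}$ then $S\prec U$. The key point is that since $B\in\ESFP_{R}$, the top factor $B/BL_{h(B)-1}$ is \emph{simple}, say isomorphic to $V \defug B/BL_{h(B)-1}$ with $h(V) = h(B)$. Any simple $S$ with $S\lesssim B/BL_{h(S)-1}$ must therefore have $h(S) = h(B)$ and $S\is V$, so that checking condition (1) reduces to showing $V\prec U$. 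But $V\preccurlyeq U$ is precisely our standing assumption, and since $h(V) = h(B)<h(A) = h(U)$ the relation cannot be an equality (distinct Loewy lengths force $V\not\is U$), whence $V\prec U$ strictly. Thus condition (1) is satisfied.

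With condition (1) verified and order-selectivity of $A$ assumed in the hypotheses of the corollary, Lemma \ref{lem extendedtrace} applies and gives $B\lesssim A$, completing the converse. The main obstacle I anticipate is the careful handling of simple modules at a fixed Loewy height: one must be sure that $S\lesssim B/BL_{h(S)-1}$ genuinely forces $h(S) = h(B)$ and $S\is V$, which relies on $B$ being eventually simple so that its top layer is a single simple module rather than a general semisimple module. Once this is pinned down, the strictness of $V\prec U$ follows cleanly from the inequality $h(B)<h(A)$, and the rest is a direct invocation of the lemma. No delicate transfinite induction is needed here, since that work has already been carried out inside Lemma \ref{lem extendedtrace}.
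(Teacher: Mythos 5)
Your forward direction is fine and matches the paper: it is exactly the reformulation of order-selectivity of $A$ (applied with $C=B$) recorded just before Lemma \ref{lem extendedtrace}. The converse, however, has a genuine gap. You claim that any simple $S$ with $S\lesssim B/BL_{h(S)-1}$ must satisfy $h(S)=h(B)$ and $S\is B/BL_{h(B)-1}$, ``since $B$ is eventually simple.'' This confuses $B/BL_{h(S)-1}$ with $B/BL_{h(B)-1}$. Condition (1) of Lemma \ref{lem extendedtrace} quantifies over \emph{all} $S\in\Simp_{R}$, at every Loewy level $h(S)\le h(B)$, and for $h(S)<h(B)$ the module $B/BL_{h(S)-1}$ is not simple: its socle is the Loewy factor $BL_{h(S)}/BL_{h(S)-1}$, which is in general a semisimple module with many pairwise non-isomorphic simple summands. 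Eventual simplicity of $B$ controls only the \emph{top} factor $B/BL_{h(B)-1}$; it says nothing about the lower layers, so your reduction of condition (1) to the single simple $V=B/BL_{h(B)-1}$ fails, and with it the verification of the lemma's hypotheses.

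The telltale sign is that your argument never uses the order-selectivity of $B$, which is part of the corollary's hypotheses and is exactly what the paper's proof invokes to close this gap: if $S\lesssim B/BL_{h(S)-1}$, then order-selectivity of $B$ gives $S\preccurlyeq B/BL_{h(B)-1}$, and transitivity of $\preccurlyeq$ together with the assumption $B/BL_{h(B)-1}\preccurlyeq A/AL_{h(A)-1}$ gives $S\preccurlyeq A/AL_{h(A)-1}$; since $h(S)\le h(B)<h(A)$ forces $S\not\is A/AL_{h(A)-1}$, the relation is strict, so condition (1) holds and Lemma \ref{lem extendedtrace} yields $B\lesssim A$. Your observation about strictness (distinct Loewy heights ruling out isomorphism, hence upgrading $\preccurlyeq$ to $\prec$) is correct and is in fact needed — the paper glosses over it — but it must be applied to every $S$ arising from the lower layers via order-selectivity of $B$, not merely to the top factor $V$.
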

\begin{proof}
We already observed that the ``only if'' part holds as a consequence of the order-selectivity of $A$. Conversely, assume that $B/BL_{h(B)-1}\preccurlyeq A/AL_{h(A)-1}$. Given $S\in\Simp_{R}$, if $S\lesssim B/BL_{h(S)-1}$, then $S\preccurlyeq B/BL_{h(B)-1}$ by the order-selectivity of $B$, consequently $S\preccurlyeq A/AL_{h(A)-1}$ by the transitivity of $\preccurlyeq$. Thus $B\lesssim A$ by Lemma \ref{lem extendedtrace}.
\end{proof}

We say that a complete and irredundant subset $\ZA\sbs \ESFP_{R}$ is \emph{order-representa\-tive} if all of its elements are order-selective; if it is the case, then Corollary \ref{cor orderselect1} tells us that the following property holds:
\begin{equation}\label{eq order-selective}
    \text{given $A,B\in\ZA$, then $A\lesssim B$ if and only if $A/AL_{h(A)-1}\preccurlyeq B/BL_{h(B)-1}$}.
\end{equation}
If $\ZA$ is order-representative then, with respect to the algebraic preorder, the subset $\{\overline{A}\mid A\in\ZA\}$ of $\CV(R)$ is order isomorphic to $\Simp_{R}$ and hence it is partially ordered.

\begin{cor}\label{cor orderselect2}
Let $R$ be a semiartinian and regular ring and let $\ZA$ be an order-representative subset of $\ESFP_{R}$. If $A, B_{1},\ldots,B_{r}\in\ZA$ are such that $B_{i}\lesssim A$ and $h(B_{i})<h(A)$ for all $i\in\{1,\ldots,r\}$, then
\begin{equation}\label{eq orderselect3}
B_{1}^{n_{1}}\oplus\cdots\oplus B_{r}^{n_{r}}\lesssim A
\end{equation}
for every choice of positive integers $n_{1},\ldots,n_{r}$.
\end{cor}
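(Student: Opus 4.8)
The plan is to put $B = B_{1}^{n_{1}}\oplus\cdots\oplus B_{r}^{n_{r}}$ and to obtain \eqref{eq orderselect3} by a single application of Lemma \ref{lem extendedtrace} to the pair $(B,A)$. To invoke that lemma I must verify its three hypotheses. Since $B\in\FP_{R}$ and $(M_{1}\oplus M_{2})L_{\a} = M_{1}L_{\a}\oplus M_{2}L_{\a}$ for all $\a$, one has $h(M_{1}\oplus M_{2}) = \max(h(M_{1}),h(M_{2}))$, whence $h(B) = \max_{i}h(B_{i})<h(A)$ by the assumption $h(B_{i})<h(A)$; moreover $A$ is order-selective because $\ZA$ is order-representative. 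The only substantial point is to check condition (1) of Lemma \ref{lem extendedtrace}.

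So set $U = A/AL_{h(A)-1}$ and take $S\in\Simp_{R}$ with $S\lesssim B/BL_{h(S)-1}$; I must show $S\prec U$. First I reduce to a single summand. Writing $\bar{R} = R/L_{h(S)-1}$, the module $B/BL_{h(S)-1}\is\bigoplus_{i}(B_{i}/B_{i}L_{h(S)-1})^{n_{i}}$ is finitely generated and projective over the regular ring $\bar{R}$, and so is $S$. Since $\CV(\bar{R})$ is a refinement monoid and $S$ is indecomposable, refining the isomorphism $S\oplus C\is\bigoplus_{i}(B_{i}/B_{i}L_{h(S)-1})^{n_{i}}$ (where $B/BL_{h(S)-1}\is S\oplus C$) shows that $S$ is isomorphic to a direct summand of $B_{j}/B_{j}L_{h(S)-1}$ for some $j$, that is, $S\lesssim B_{j}/B_{j}L_{h(S)-1}$.

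Now the order-selectivity of $B_{j}$ turns this into $S\preccurlyeq B_{j}/B_{j}L_{h(B_{j})-1}$. Since $B_{j}\lesssim A$, both $B_{j}$ and $A$ are order-selective, and $h(B_{j})<h(A)$, Corollary \ref{cor orderselect1} yields $B_{j}/B_{j}L_{h(B_{j})-1}\preccurlyeq U$; and this inequality is strict, because $h(B_{j}/B_{j}L_{h(B_{j})-1}) = h(B_{j})<h(A) = h(U)$ forces $B_{j}/B_{j}L_{h(B_{j})-1}\ncong U$ (isomorphic simple modules have the same Loewy height). Hence $S\preccurlyeq B_{j}/B_{j}L_{h(B_{j})-1}\prec U$, and the antisymmetry of $\preccurlyeq$ gives $S\prec U$. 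This is exactly condition (1), so Lemma \ref{lem extendedtrace} applies and delivers $B = B_{1}^{n_{1}}\oplus\cdots\oplus B_{r}^{n_{r}}\lesssim A$, which is \eqref{eq orderselect3}.

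I expect the main obstacle to be the reduction step ``$S\lesssim\bigoplus_{i}(B_{i}/B_{i}L_{h(S)-1})^{n_{i}}$ implies $S\lesssim B_{j}/B_{j}L_{h(S)-1}$ for some $j$'', which is precisely where the refinement property of finitely generated projective modules over a regular ring is indispensable (working over $\bar{R} = R/L_{h(S)-1}$ so that the relevant quotients become projective); everything else is bookkeeping with Loewy heights and with the transitivity and antisymmetry of $\preccurlyeq$.
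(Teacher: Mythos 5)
Your proof is correct, but it takes a genuinely different route from the paper's. The paper first settles the case $r=1$ by applying Lemma \ref{lem extendedtrace} to the pair $(B_{1}^{n_{1}},A)$, and then inducts on $r$: assuming $B_{1}^{n_{1}}\oplus\cdots\oplus B_{r-1}^{n_{r-1}}\lesssim A$, it writes $A\is A'\oplus B_{1}^{n_{1}}\oplus\cdots\oplus B_{r-1}^{n_{r-1}}$, checks that the complement $A'$ is again eventually simple and order-selective with the same top as $A$, uses the nontrivial (``if'') direction of Corollary \ref{cor orderselect1} to get $B_{r}\lesssim A'$, and then invokes the $r=1$ case to absorb $B_{r}^{n_{r}}$ into $A'$. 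You instead apply Lemma \ref{lem extendedtrace} once, to the whole sum $B = B_{1}^{n_{1}}\oplus\cdots\oplus B_{r}^{n_{r}}$, verifying hypothesis (1) directly; for that you only need the trivial (``only if'') direction of Corollary \ref{cor orderselect1}, which is just the order-selectivity of $A$ applied to $B_{j}\lesssim A$. Your version buys uniformity in $r$ (no induction, no complement module $A'$, no check that $A'$ inherits order-selectivity), and it also makes explicit the strictness point $S\prec A/AL_{h(A)-1}$: the paper's $r=1$ step only records $S\preccurlyeq A/AL_{h(S)-1}$ before citing Lemma \ref{lem extendedtrace}, whose hypothesis is the strict relation, and your height comparison $h(S)\le h(B_{j})<h(A)$ supplies exactly the missing justification. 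One simplification is available in your reduction step: since $S$ is simple, any embedding of $S$ into $\bigoplus_{i}(B_{i}/B_{i}L_{h(S)-1})^{n_{i}}$ composed with a suitable coordinate projection is nonzero, hence injective, so $S\lesssim B_{j}/B_{j}L_{h(S)-1}$ for some $j$ without appealing to refinement in $\CV(R/L_{h(S)-1})$ — this elementary projection argument is what the paper uses silently when it passes from $S\lesssim (B_{1}/B_{1}L_{h(S)-1})^{n_{1}}$ to $S\lesssim B_{1}/B_{1}L_{h(S)-1}$; your refinement argument is valid, just heavier than needed.
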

\begin{proof}
Let us consider first the case $r = 1$. Given $S\in\Simp_{R}$, suppose that
\[
S\lesssim B_{1}^{n_{1}}/B_{1}^{n_{1}}L_{h(S)-1} \is (B_{1}/B_{1}L_{h(S)-1})^{n_{1}}.
\]
Then $S\lesssim B_{1}/B_{1}L_{h(S)-1}$ and therefore $S\lesssim A/AL_{h(S)-1}$. Inasmuch as $A$ is order selective, it follows that $S\preccurlyeq A/AL_{h(S)-1}$ and, consequently, $B_{1}^{n_{1}}\lesssim A$ by Lemma \ref{lem extendedtrace}.

Next, let $r>1$ and assume, inductively, that
$B_{1}^{n_{1}}\oplus\cdots\oplus B_{r-1}^{n_{r-1}}\lesssim A$.
Then
\[
A\is A'\oplus B_{1}^{n_{1}}\oplus\cdots\oplus B_{r-1}^{n_{r-1}}
\]
for a suitable $A'\in\FP_{R}$. We note that $h(A') = h(A)$ and $A'/A'L_{h(A')-1}\is A/AL_{h(A)-1}$; moreover $A'$ is order-selective. Since $B_{r}/B_{r}L_{h(B_{r})-1}\preccurlyeq A/AL_{h(A)-1}\is A'/A'L_{h(A')-1}$, we infer from Corollary \ref{cor orderselect1} that $B_{r}\lesssim A'$ and the first part of the present proof tells us that $B_{r}^{n_{r}}\lesssim A'$. We conclude that \eqref{eq orderselect3} holds.
\end{proof}

Next, let $M$ be a right module over a ring $R$ (here we make no assumption on $R$, apart multiplicative identity) and let us consider the set of ideals
\[
\CH_{M} \defug \{J\in\BL_{2}(R)\mid MJ = M\}.
\]
Obviously $\CH_{M}$ is not empty, as $R\in\CH_{M}$. If $\CH_{M}$ has a minimal element $H$, then $H$ is the minimum, because if $J\in\CH_{M}$, then $M(JH) = M$ and therefore $JH = H$, from which $H\sbs J$. The same argument shows also that $H = H^{2}$. If it exists, we shall denote with $H_{M}$ the minimum of $\CH_{M}$. As it is standard, we denote by $\Tr_{R}(M)$ the trace ideal of $M$ in $R$; it is immediate to check that $\Tr_{R}(M)\sbs H_{M}$ and the equality holds when $M$ is projective. We call $H_{M}$ the \emph{extended trace} of $M$. In our present context of regular and semiartinian rings we are interested in the extended traces of simple modules.

\begin{pro}\label{pro extendedtrace}
Let $R$ be a semiartinian and regular ring, let $U,V$ be simple right $R$-modules and set $\a+1 = h(U)$, $\b+1 = h(V)$. Then the following properties hold:
\begin{enumerate}
\item If it exists, then $H_{U}$ is the smallest ideal of $R$ such that
\begin{equation}\label{eq extendedtrace}
\Tr_{R/L_{\a}}(U) = (H_{U}+L_{\a})/L_{\a}.
\end{equation}
In addition there exists an idempotent $x\in H_{U}$ such that $H_{U} = RxR$ and $xR/xL_{\a}\is U$.
\item If $H_{U}$ exists and $U\preccurlyeq V$, then $H_{U}$ is contained in every ideal $K$ of $R$ such that $VK = V$.
\item If $H_{V}$ exists, then
\[
U\not\preccurlyeq V\quad\text{ if and only if }\quad UH_{V} = 0.
\]
Consequently, if $H_{U}$ exists too, then
\[
U\preccurlyeq V\quad\text{ if and only if }\quad H_{U}\sbs H_{V}.
\]
\end{enumerate}
\end{pro}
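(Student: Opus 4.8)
The plan is to prove part (1) by descending to the ring $\bar R = R/L_{\a}$, where $U$ becomes a simple \emph{projective} module, and then to obtain parts (2) and (3) almost formally from the fact that $\preccurlyeq$ is defined by right annihilators together with the minimality property of the extended trace. Throughout I write $\bar J = (J+L_{\a})/L_{\a}$ for an ideal $J$ of $R$. Since $h(U) = \a+1$ forces $UL_{\a} = 0$, the module $U$ is a module over $\bar R$, and by the standing description of $\Simp_{R}$ it is a simple projective $\bar R$-module; moreover the action of any ideal $J$ on $U$ factors through $\bar J$, so $UJ = U\bar J$ and hence $J\in\CH_{U}$ if and only if $\bar J$ lies in the corresponding family $\CH_{U}^{(\bar R)}$ computed over $\bar R$. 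As $U$ is projective over $\bar R$, the remark that $H_{M} = \Tr_{R}(M)$ for projective $M$ shows that $\CH_{U}^{(\bar R)}$ has least element $\Tr_{\bar R}(U)$. Assuming $H_{U}$ exists, comparing it with the preimage in $R$ of $\Tr_{\bar R}(U)$ gives $\bar{H}_{U}\sbs\Tr_{\bar R}(U)$, while $H_U\in\CH_U$ gives the reverse inclusion; this yields $\bar{H}_{U} = \Tr_{\bar R}(U)$, and the same minimality argument shows $H_{U}$ is contained in every ideal $H$ with $\bar H = \Tr_{\bar R}(U)$, which is exactly \eqref{eq extendedtrace} and the smallest-ideal assertion.

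The step I expect to cost the most is the construction of the idempotent $x$ with $H_{U} = RxR$ and $xR/xL_{\a}\is U$. I would start from an idempotent $e\in L_{\a+1}\setminus L_{\a}$ with $eR/eL_{\a}\is U$; then $UH_{U} = U$ translates into $eR = eH_{U}+eL_{\a}$, so there are $h\in H_{U}$ and $l\in L_{\a}$ with $e = eh+el$. The element $y\defug eh$ then lies in $H_{U}$, satisfies $\bar y = \bar e$ (whence $h(y) = \a+1$), and by regularity of $R$ one may write $yR = zR$ for an idempotent $z$. Here $z\in H_{U}$, and $zR/zL_{\a}\is\overline{zR} = \overline{yR} = \overline{eR}\is U$, while $RzR = RyR$ has image $\overline{ReR} = \Tr_{\bar R}(U) = \bar{H}_{U}$. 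The smallest-ideal property just established forces $H_{U}\sbs RzR$, and $z\in H_{U}$ gives $RzR\sbs H_{U}$; hence $RzR = H_{U}$ and $x = z$ is as required.

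For parts (2) and (3) I would use only two elementary facts: that $U\preccurlyeq V$ means $r_{R}(U)\sbs r_{R}(V)$, and that for a simple module $W$ and an ideal $J$ one has $WJ\in\{0,W\}$ with $WJ = 0$ iff $J\sbs r_{R}(W)$. For (2): if $VK = V$ then $K\nsbs r_{R}(V)$, and since $r_{R}(U)\sbs r_{R}(V)$ this gives $K\nsbs r_{R}(U)$, so $UK = U$; thus $K\in\CH_{U}$ and $H_{U}\sbs K$. For (3): if $U\not\preccurlyeq V$ then $r_{R}(U)\nsbs r_{R}(V)$, so $V\,r_{R}(U)\ne 0$, i.e.\ $V\,r_{R}(U) = V$ and $r_{R}(U)\in\CH_{V}$, giving $H_{V}\sbs r_{R}(U)$ and hence $UH_{V} = 0$; conversely, if $UH_{V} = 0$ then $H_{V}\sbs r_{R}(U)$, and because $VH_{V} = V$ forces $H_{V}\nsbs r_{R}(V)$, the inclusion $r_{R}(U)\sbs r_{R}(V)$ is impossible, so $U\not\preccurlyeq V$.

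Finally, assuming $H_{U}$ exists as well, the stated consequence drops out of the first part of (3): if $U\preccurlyeq V$ then $UH_{V}\ne 0$, so $UH_{V} = U$, $H_{V}\in\CH_{U}$ and $H_{U}\sbs H_{V}$; and if $H_{U}\sbs H_{V}$ then $U = UH_{U}\sbs UH_{V}$, so $UH_{V}\ne 0$ and $U\preccurlyeq V$. Thus the only genuinely technical point is the idempotent construction in part (1); the order-theoretic statements reduce to bookkeeping with annihilators and the minimality of $H_{U}$ and $H_{V}$.
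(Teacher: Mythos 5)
Your proposal is correct, and while part (1) follows the paper's skeleton, part (2) takes a genuinely different and more elementary route. In part (1) both you and the paper pass to $\bar{R} = R/L_{\a}$ and identify $(H_{U}+L_{\a})/L_{\a}$ with $\Tr_{\bar{R}}(U)$, but the mechanisms differ: the paper uses the fact that the trace of a simple projective module over a semiprime ring is a \emph{minimal} ideal of $\bar{R}$ and computes $J/L_{\a} = (J/L_{\a})\bigl((H_{U}+L_{\a})/L_{\a}\bigr)$ directly, whereas you use the least-element property $H_{M} = \Tr(M)$ for projective $M$ (the paper's own remark just before the proposition) applied over $\bar{R}$; both are sound. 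Your explicit idempotent construction ($e = eh+el$, $y = eh$, $yR = zR$ with $z = z^{2}$) actually fills in a step the paper only asserts (``there is an idempotent $x\in H_{U}$ such that $xR/xL_{\a}\is U$ and $Ux\ne 0$''), and by deducing $H_{U}\sbs RzR$ from the smallest-ideal characterization you bypass the paper's verification that $Ux\ne 0$ and $U(RxR) = U$. The real divergence is part (2): the paper invokes the nontrivial external result \cite[Theorem 2.1]{Bac:16} to produce $y\in K$ with $(yR+L_{\b})/L_{\b}\is V$ and then $x$ with $(xR+L_{\a})/L_{\a}\is U$ and $RxR\sbs RyR\sbs K$, whereas your argument is pure annihilator bookkeeping: $VK = V$ forces $K\nsbs r_{R}(V)$, the hypothesis $r_{R}(U)\sbs r_{R}(V)$ then forces $K\nsbs r_{R}(U)$, hence $UK = U$ and $H_{U}\sbs K$ by minimality of $H_{U}$. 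This is valid because, for regular $R$, the paper \emph{defines} $U\preccurlyeq V$ as $r_{R}(U)\sbs r_{R}(V)$, and your argument uses neither regularity nor semiartinianity nor any external theorem — a genuine simplification (the paper's heavier route is what one would need if $\preccurlyeq$ were only given by the abstract order of \cite{Bac:15} without the annihilator description). Part (3) coincides with the paper's proof, and your derivation of the final equivalence from (3) alone, rather than via (2), is an equally trivial rearrangement.
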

\begin{proof}
(1) First recall that, over a semiprime ring, the trace of a simple and projective right module is a minimal ideal. In the present setting, if $U\in\Simp_{R}$ and $\a+1 = h(U)$, then $U$ is $R/L_{\a}$-projective. Let $J$ be the unique ideal of $R$ such that $\Tr_{R/L_{\a}}(U) = J/L_{\a}$. We have that $H_{U}\nsbs L_{\a}$, otherwise it would follow that $UH_{U} = 0$; thus, since $J/L_{\a}$ is a minimal ideal of $R/L_{\a}$, then $J/L_{\a} = (J/L_{\a})((H_{U}+L_{\a})/L_{\a})$ and it follows that $J/L_{\a}\sbs (H_{U}+L_{\a})/L_{\a}$. On the other hand we have that $U = UJ$, so that $H_{U}\sbs J$ and \eqref{eq extendedtrace} is proved. If $K$ is any ideal of $R$ such that
\[
\Tr_{R/L_{\a}}(U) = (K+L_{\a})/L_{\a}.
\]
then $U = UK$ and therefore $H_{U}\sbs K$. Now there is an idempotent $x\in H_{U}$ such that $xR/xL_{\a}\is U$ and $Ux \ne 0$. As a consequence $U(RxR) = U$ and so $H_{U} = RxR$.

(2) By the same argument of the proof of (1), we have that $(K+L_{\b})/L_{\b}$ contains $\Tr_{R/L_{\b}}(V)$ and, consequently, there exists $y\in K$ such that $(yR+L_{\b})/L_{\b} \is V$. By \cite[Theorem 2.1]{Bac:16} there is some $x\in R$ such that $(xR+L_{\a})/L_{\a} \is U$ and $RxR\sbs RyR$. Since $U(RxR) = U$, it follows from (1) that $H_{U}\sbs RxR\sbs RyR\sbs K$.

(3) If $U\not\preccurlyeq V$, then $V\cdot r_{R}(U) = V$ and hence $H_{V}\sbs r_{R}(U)$. Conversely, the latter condition implies that $V\cdot r_{R}(U) \ne 0$ and so $U\not\preccurlyeq V$.
\end{proof}

\begin{cor}\label{cor extende3dtrace}
Let $R$ be a semiartinian and regular ring. Given an ordinal $\a$, if $H_{U}$ exists for all $U\in\Simp_{R}$ such that $h(U)\le\a$, then
\begin{equation}\label{eq extendedtrace1}
L_{\a} = \sum\{H_{U}\mid U\in\Simp_{R}\text{ and }h(U)\le\a\}.
\end{equation}
\end{cor}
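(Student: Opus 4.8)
The plan is to prove the equality $L_{\a} = \sum\{H_{U}\mid U\in\Simp_{R}\text{ and }h(U)\le\a\}$ by double inclusion, exploiting the inductive structure of the Loewy chain and the characterization of $H_{U}$ given in Proposition \ref{pro extendedtrace}, (1). For the inclusion $\supseteq$, I would argue that each summand $H_{U}$ with $h(U)=\b+1\le\a$ is already contained in $L_{\a}$. Indeed, by Proposition \ref{pro extendedtrace}, (1) there is an idempotent $x\in H_{U}$ with $H_{U}=RxR$ and $xR/xL_{\b}\is U$; since $h(U)=\b+1\le\a$, we have $h(x)=h(xR)\le\a$, so $x\in L_{\a}$ and hence $H_{U}=RxR\sbs L_{\a}$ because $L_{\a}$ is a two-sided ideal. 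Summing over all such $U$ gives $\sum H_{U}\sbs L_{\a}$.

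For the reverse inclusion $\subseteq$, the natural approach is transfinite induction on $\a$, using that $L_{\a}=\Soc_{\a}(R_R)$ is generated by its idempotents. The case $\a=0$ is trivial since $L_{0}=0$. For a limit ordinal the identity $L_{\a}=\bigcup_{\b<\a}L_{\b}$ reduces the claim immediately to the inductive hypothesis, because the set of simple modules $U$ with $h(U)\le\a$ is the union of those with $h(U)\le\b$ over $\b<\a$. The crux is the successor step $\a=\b+1$: I would take an arbitrary idempotent $x\in L_{\a}$ and show $x\in\sum\{H_{U}\mid h(U)\le\a\}$. Writing $h(x)=\g+1\le\a$, if $\g<\b$ then $x\in L_{\b}$ and the inductive hypothesis finishes the argument, so the essential subcase is $h(x)=\a=\b+1$. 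Here I would invoke Lemma \ref{lem decompsimple} to decompose $x=x_{1}+\cdots+x_{n}$ into pairwise orthogonal idempotents of $xRx$ with each $x_{i}R/x_{i}L_{\b}$ simple; setting $U_{i}=x_{i}R/x_{i}L_{\b}$, each $U_{i}\in\Simp_{R}$ has $h(U_{i})=\b+1=\a$, and by Proposition \ref{pro extendedtrace}, (1) we get $x_{i}\in H_{U_{i}}\sbs\sum\{H_{U}\mid h(U)\le\a\}$. Summing yields $x\in\sum\{H_{U}\mid h(U)\le\a\}$.

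Finally, since $L_{\a}$ is generated as a right ideal (indeed as a two-sided ideal) by the idempotents it contains—being a socle layer of a regular ring—the containment of every such idempotent in $\sum\{H_{U}\mid h(U)\le\a\}$ forces $L_{\a}\sbs\sum\{H_{U}\mid h(U)\le\a\}$, completing the induction and the proof. The step I expect to be the main obstacle is the successor case: one must make sure that when $x\in L_{\a}\setminus L_{\b}$ with $h(x)<\a$ the inductive hypothesis genuinely applies, and that the decomposition from Lemma \ref{lem decompsimple} correctly identifies each $x_{i}$ as lying in the extended trace $H_{U_{i}}$ via the idempotent characterization of part (1). Care is also needed to confirm that $L_{\a}$ really is the ideal generated by its idempotents, which holds because in a regular ring every finitely generated right ideal is generated by an idempotent and $\Soc_{\a}(R_R)$ is the directed union of such pieces.
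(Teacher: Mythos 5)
Your inclusion $\supseteq$ and the overall inductive skeleton are sound, but the successor step contains a genuine gap: the assertion that ``by Proposition \ref{pro extendedtrace}, (1) we get $x_{i}\in H_{U_{i}}$'' does not follow and is in general false. Proposition \ref{pro extendedtrace}, (1) is an \emph{existence} statement: it produces \emph{some} idempotent $x\in H_{U}$ with $H_{U}=RxR$ and $xR/xL_{h(U)-1}\is U$; it does not say that \emph{every} idempotent $x_{i}$ with $x_{i}R/x_{i}L_{\b}\is U_{i}$ lies in $H_{U_{i}}$. For a concrete failure, take $R=R_{1}\times D$, where $R_{1}$ is the prime, regular, semiartinian ring of Loewy length $2$ consisting of all endomorphisms of an infinite-dimensional $D$-vector space congruent to a scalar modulo finite rank, with $S_{1}=\Soc(R_{1})$. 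If $U$ is the simple top of $R_{1}$ inflated to $R$, then $h(U)=2$, $r_{R}(U)=S_{1}\times D$, and one computes $H_{U}=R_{1}\times 0$. The idempotent $x=(1,1)=1_{R}$ satisfies $xR/xL_{1}\is U$ simple, so Lemma \ref{lem decompsimple} yields the trivial decomposition $n=1$, $x_{1}=x$, and your argument would place $1_{R}$ in $H_{U}=R_{1}\times 0$, which is false. What is true, and all your argument needs, is the weaker containment $x_{i}\in H_{U_{i}}+L_{\b}$: since $(x_{i}R+L_{\b})/L_{\b}\is x_{i}R/x_{i}L_{\b}\is U_{i}$ is a simple right ideal of $R/L_{\b}$, it is contained in $\Tr_{R/L_{\b}}(U_{i})=(H_{U_{i}}+L_{\b})/L_{\b}$ by \eqref{eq extendedtrace}, whence $x_{i}\in H_{U_{i}}+L_{\b}$. (In the example, $x=(1,0)+(0,1)$ with $(1,0)\in H_{U}$ and $(0,1)\in L_{1}$.)

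With that correction your proof closes, because the extra term $L_{\b}$ is absorbed by the inductive hypothesis $L_{\b}=\sum\{H_{U}\mid U\in\Simp_{R}\text{ and }h(U)\le\b\}$, which applies since the hypothesis of the corollary covers all $U$ with $h(U)\le\b$; this gives $x_{i}\in\sum\{H_{U}\mid h(U)\le\a\}$ and hence the desired inclusion. This repaired argument is in substance the paper's proof, which stays at the level of ideals rather than idempotents: at the successor step it writes $L_{\a}/L_{\b}=\Soc(R/L_{\b})=\sum\{\Tr_{R/L_{\b}}(U)\mid h(U)=\a\}=\left(\sum\{H_{U}\mid h(U)=\a\}+L_{\b}\right)/L_{\b}$ using \eqref{eq extendedtrace}, and then invokes the inductive hypothesis on $L_{\b}$ --- note that the paper, too, only identifies the sum of the extended traces \emph{modulo} $L_{\b}$, never claiming that the relevant idempotents themselves lie in the extended traces. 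So the gap is local and fixable, but as written the step would fail.
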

\begin{proof}
In view of Proposition \ref{pro extendedtrace}, (1), the equality \eqref{eq extendedtrace1} is clear if $\a = 1$. Let $\a>1$ and assume that
\begin{equation}\label{eq extendedtrace2}
L_{\b} = \sum\{H_{U}\mid U\in\Simp_{R}\text{ and }h(U)\le\b\}
\end{equation}
for every $\b<\a$. Then \eqref{eq extendedtrace1} immediately follows in case $\a$ is a limit ordinal. Suppose that $\a = \b+1$ for some $\b$. Then we have:
\begin{align*}
L_{\a}/L_{\b} &= \Soc(R/L_{\b}) = \sum\left.\left\{\Tr_{R/L_{\b}}(U)\right|U\in\Simp_{R}\text{ and }h(U)=\a\right\} \\
&= \sum\left.\left\{(H_{U}+L_{\b})/L_{\b}\right|U\in\Simp_{R}\text{ and }h(U)=\a\right\}\,\,\,\,\,\text{ by \eqref{eq extendedtrace}} \\
&= \left.\left(\sum\{H_{U}\mid U\in\Simp_{R}\text{ and }h(U)=\a\}+L_{\b}\right)\right/L_{\b}.
\end{align*}
Then \eqref{eq extendedtrace1} follows.
\end{proof}

If $R$ is a semiartinian and regular ring, then there is a deep link between the lattice of ideals of $R$ and the order structure of $\Simp_{R}$; in fact, if $H$ is an ideal of $R$, then $\Simp_{R/H}$ is an upper subset of
$\Simp_{R}$, so that we may consider the decreasing map
\[
\lmap{\F}{\BL_2(R)}{\Uparrow\!\!\Simp_{R}}
\]
defined by $\F(H) = \Simp_{R/H}$. This map is injective and has as a left inverse the map
\[
\lmap{\P}{\Uparrow\!\!\Simp_{R}}{\BL_2(R)}
\]
defined by $\P(\ZS) = \bigcap\{r_R(U)\mid U\in\ZS\}$. Indeed, it is clear that $\F(H)\sps\F(K)$ whenever $H\sbs K$. Inasmuch as $R$ is
regular, then every ideal of $R$ is the intersection of all
primitive ideals containing it. Thus, given $H\in\BL_2(R)$, we have
\begin{gather*}
\P(\F(H)) = \P\left(\Simp_{R/H}\right) = \bigcap\left\{r_R(U)\mid U\in\Simp_{R/H}\right\} \\
= \bigcap\left\{r_R(U)\mid \text{$U\in\Simp_{R}$ and $UH = 0$}\right\} = H.
\end{gather*}
In general the map $\F$ need not be surjective. In \cite[Definition 2.5]{Bac:16} we defined $R$ to be \emph{very well behaved} in case the map $\F$ is an anti-isomorphism. As already observed in \cite{Bac:16}, if $\Simp_{R}$ has no infinite antichains, then $R$ is very well behaved; moreover, if $R$ is very well behaved, then so is every factor ring of $R$ and $\Simp_{R}$ has a finite cofinal subset (see \cite[Proposition 2.7]{Bac:16}).  With the next result we establish a strict relationship between the property of being very well behaved, the existence of the extended trace for every member of $\Simp_{R}$ and the existence of a simp-generating set $[\ZA]$ for $\KO(R)$, where $\ZA$ is an order representative subset of $\ESFP_{R}$.

We shall need the following lemma.

\begin{theo}\label{theo extendedtrace}
Let $R$ be a regular and semiartinian ring with Loewy length $\xi+1$. Then the following conditions are equivalent:\begin{enumerate}
             \item $R$ is very well behaved.
             \item Every simple right $R$-module has the extended trace.
             \item $\ESFP_{R}$ has an order-representative subset.
           \end{enumerate}
\end{theo}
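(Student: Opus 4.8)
\textbf{The plan is to} prove the cycle (1)$\Rightarrow$(2)$\Rightarrow$(3)$\Rightarrow$(1), exploiting the machinery of extended traces (Proposition \ref{pro extendedtrace}, Corollary \ref{cor extende3dtrace}) and order-selectivity (Lemma \ref{lem extendedtrace}, Corollaries \ref{cor orderselect1}, \ref{cor orderselect2}) that was assembled precisely for this purpose. The central technical bridge is that the very-well-behaved property translates the lattice-theoretic statement ``$\F$ is an anti-isomorphism'' into the existence, for every $U\in\Simp_{R}$, of a \emph{minimum} member of $\CH_{U}=\{J\in\BL_2(R)\mid UJ=U\}$, namely the extended trace $H_{U}$.

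\textbf{For (1)$\Rightarrow$(2)}, I would fix $U\in\Simp_{R}$ and study $\CH_{U}$. The equality $\P(\F(H))=H$ established just before the theorem shows that $\F$ restricts to an anti-isomorphism onto its image, and very-well-behavedness makes it onto $\Uparrow\!\!\Simp_{R}$. The key observation is that $UJ=U$ is equivalent to $U\in\Simp_{R/J}=\F(J)$ failing, more precisely to $r_R(U)\not\supseteq J$; so I would characterize $\CH_{U}$ in terms of the upper set $\Simp_{R}\setminus\{V\mid V\preccurlyeq U\}$ and its complement. Under the anti-isomorphism $\F$, the collection of ideals $J$ with $UJ=U$ should correspond to a collection of upper subsets that has a \emph{largest} member (the one omitting exactly the principal lower set generated by $U$), whence $\CH_{U}$ has a minimum by anti-isomorphism. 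Then $H_{U}$ exists. Since the subsequent arguments (Corollary \ref{cor extende3dtrace}) only need existence of $H_{U}$ for all $U$, this suffices.

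\textbf{For (2)$\Rightarrow$(3)}, assuming every $U$ has an extended trace, I would use Proposition \ref{pro extendedtrace}, (1) to pick, for each $U\in\Simp_{R}$ with $\a+1=h(U)$, an idempotent $x_U\in H_{U}$ with $x_U R/x_U L_{\a}\is U$ and $H_{U}=Rx_U R$. Setting $A_U\defug x_U R$ gives a complete and irredundant subset $\ZA=\{A_U\}$ of $\ESFP_{R}$. The heart of this implication is to verify that each $A_U$ is order-selective, i.e.\ that $V\lesssim A_U/A_U L_{h(V)-1}$ is equivalent to $V\preccurlyeq U$. The ``$\Leftarrow$'' direction follows from \cite[Theorem 2.1]{Bac:16}. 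For ``$\Rightarrow$'', the extended-trace characterization in Proposition \ref{pro extendedtrace}, (3)—namely $V\preccurlyeq U \Leftrightarrow H_{V}\sbs H_{U}$, and dually $V\not\preccurlyeq U \Leftrightarrow V H_{U}=0$—is exactly the tool: since $A_U$ lives inside $H_{U}=Rx_U R$, any simple $V$ occurring in a quotient of $A_U$ satisfies $VH_{U}=V\ne 0$, forcing $V\preccurlyeq U$. This is the step I expect to be the main obstacle, because it requires matching the module-theoretic containment $V\lesssim A_U/A_U L_{h(V)-1}$ against the ideal-theoretic condition $VH_{U}\ne0$ carefully, using the identification $H_{U}=Rx_U R$ and the behavior of traces modulo $L_{\a}$.

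\textbf{For (3)$\Rightarrow$(1)}, I would start from an order-representative $\ZA$ and show $\F$ is onto. Given an upper subset $\ZS\in\Uparrow\!\!\Simp_{R}$, I must produce an ideal $H$ with $\F(H)=\Simp_{R/H}=\ZS$, equivalently $\P(\F(H))=\P(\ZS)$ together with surjectivity onto $\ZS$. The natural candidate is $H=\P(\ZS)=\bigcap\{r_R(U)\mid U\in\ZS\}$; since $\P$ is already a left inverse of $\F$, it suffices to show $\F(\P(\ZS))\sbs\ZS$, i.e.\ that any simple $V$ with $V\,\P(\ZS)=V$ already lies in $\ZS$. Here order-selectivity of the members of $\ZA$ enters: for $A\in\ZA$ with $A/AL_{h(A)-1}\is V$, the relation \eqref{eq order-selective} lets me compare $V$ with the simples generating $\ZS$ purely through $\preccurlyeq$, and the upper-set property of $\ZS$ then closes the argument. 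In effect, order-representativity guarantees that the preorder $\lesssim$ on $\{\overline{A}\}$ faithfully reflects $\preccurlyeq$ on $\Simp_{R}$, which is precisely what forces $\F$ to be an anti-isomorphism and hence $R$ to be very well behaved.
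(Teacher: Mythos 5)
Your cycle (1)$\Rightarrow$(2)$\Rightarrow$(3)$\Rightarrow$(1) agrees with the paper on its first two legs. The paper proves (1)$\Leftrightarrow$(2) via the diagram of maps $\G,\D,\F,\P$, taking $H_{U} = \D(\{\preccurlyeq U\}) = \P(\{\preccurlyeq U\}')$; this is exactly your ``largest upper set omitting the principal lower set of $U$'' pulled back along the anti-isomorphism, so your (1)$\Rightarrow$(2) is the paper's argument in different packaging. Your (2)$\Rightarrow$(3) is also the paper's: choose $x_{U}\in H_{U}$ by Proposition \ref{pro extendedtrace}(1), and the nontrivial direction of order-selectivity is settled by the same computation ($U\not\preccurlyeq V$ gives $UH_{V}=0$ by Proposition \ref{pro extendedtrace}(3), while $U\lesssim x_{V}R/x_{V}L_{h(U)-1}$ would force $UH_{V}=U$). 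The genuine divergence is the closing leg: the paper proves (3)$\Rightarrow$(2), reconstructing the extended trace as $H=ReR$ and establishing its \emph{minimality} by a delicate transfinite induction on $H\cap L_{\a} = K\cap L_{\a}$, using \cite[Cor.\ 2.23, Prop.\ 2.20]{Good:3} and Lemma \ref{lem decompsimple}; you instead propose a direct (3)$\Rightarrow$(1).

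Your sketch of (3)$\Rightarrow$(1), however, has two problems. First, the reduction is stated with the annihilation condition flipped: $\F(\P(\ZS)) = \{V\in\Simp_{R}\mid V\P(\ZS)=0\}$, so what must be shown is that $V\P(\ZS)=0$ implies $V\in\ZS$, equivalently (contrapositive) that $V\nin\ZS$ forces $V\P(\ZS)=V$; your ``any simple $V$ with $V\P(\ZS)=V$ lies in $\ZS$'' is the complementary, and false, statement. Second, and more substantively, invoking \eqref{eq order-selective} cannot close the argument, since that relation only compares members of $\ZA$ with one another; what is needed is, for each $V\nin\ZS$, an ideal inside $\P(\ZS)$ acting nontrivially on $V$. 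The missing step can be supplied: take $A\in\ZA$ with top $V$, pick an idempotent $e$ with $eR\lesssim A$ and $eR/eL_{h(V)-1}\is V$ (so $eR$ inherits order-selectivity), and set $H=ReR$. If $UH\ne 0$ for a simple $U$, then $Ue\ne 0$, so $U$ is an epimorphic image of $eR$; since $U$ is a projective $R/L_{h(U)-1}$-module, the induced epimorphism $eR/eL_{h(U)-1}\to U$ splits, whence $U\lesssim eR/eL_{h(U)-1}$ and order-selectivity of $eR$ gives $U\preccurlyeq V$. Since $\ZS$ is an upper set and $V\nin\ZS$, every $U\in\ZS$ satisfies $U\not\preccurlyeq V$, hence $UH=0$; thus $H\sbs\P(\ZS)$ while $VH=V\ne0$, giving $V\P(\ZS)=V$ as required, so $\F$ is onto and $R$ is very well behaved. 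With this inserted your route is correct and in fact lighter than the paper's: you need only this one-sided annihilation property of $ReR$, never its minimality, which is precisely the content the paper's transfinite induction labors to establish. As written, though, the pivotal step is only gestured at, so the proposal is incomplete at exactly the point where it departs from the paper.
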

\begin{proof}
(1)$\Leftrightarrow$(2). Let us consider the commutative diagram of maps
\begin{equation*}
\begin{split}
\begin{picture}(120,70)(0,0)
\putv{16}{37}{4}{1}{78}
\putv{94}{52}{-4}{-1}{78}
\putv{16}{28}{4}{-1}{78}
\putv{94}{4}{-4}{1}{78}
\putbb{0}{25}{$\BL_{2}(R)$}
\putbb{108}{25}{$\scriptstyle{(-)'}$}
\putbb{132}{25}{$\scriptstyle{(-)'}$}
\putbb{120}{2}{$\Uparrow\!\Simp_{R}$}
\putbb{145}{2}{,}
\putbb{120}{50}{$\Downarrow\!\Simp_{R}$}
\putv{118}{47}{0}{-1}{32}
\putv{122}{15}{0}{1}{32}
\putbb{54}{48}{$\scriptstyle\G$}
\putbb{54}{35}{$\scriptstyle\D$}
\putbb{54}{7}{$\scriptstyle\P$}
\putbb{54}{20}{$\scriptstyle\F$}
\end{picture}
\end{split}
\end{equation*}
where
\begin{align*}
\G(H) &= \{U\in\Simp_{R}\mid UH = U\}, \\
\P(\ZS) &= r_{R}(\ZS), \\
\ZS' &= \Simp_{R}\setminus\ZS, \\
\D(\ZS) &= \P(\ZS')
\end{align*}
for all $H\in\BL_{2}(R)$ and $\ZS\sbs\Simp_{R}$. Since every ideal of $R$ is the intersection of primitive ideals, it is true that
\[
\D\G = 1_{\BL_{2}(R)} = \P\F
\]
and $R$ is very well behaved if and only if $\G$ is bijective, that is $\G\D = 1_{\Downarrow\!\Simp_{R}}$. If it is the case, for every $U\in\Simp_{R}$ let us consider the ideal
\[
H \defug \D(\{\preccurlyeq U\}) = \Psi(\{\preccurlyeq U\}') = \bigcap\{r_{R}(V)\mid V\not\preccurlyeq U\}.
\]
Then
\[
U\in \{\preccurlyeq U\} = \G(\D(\{\preccurlyeq U\})) = \G(H) = \{V\in\Simp_{R}\mid VH = V\},
\]
therefore $UH = U$. Assume that $UK = U$ for some $K\in\BL_{2}(R)$ and set $\ZS = \Simp_{R/K}$. Since $\ZS'$ is a lower subset of $\Simp_{R}$ and $U\in\ZS'$, then $\{\preccurlyeq U\}\sbs\ZS'$. As a result we get
\[
H = \D(\{\preccurlyeq U\}) \sbs \D(\ZS') = \P(\ZS) = K.
\]
This shows that $H$ is the extended trace of $U$. Conversely, assume that every member of $\Simp_{R}$ has the extended trace and, given any lower subset $\ZS$ of $\Simp_{R}$, let us consider the ideal
\[
K = \sum\{H_{U}\mid U\in\ZS\}.
\]
Then $UK = U$ for all $U\in\ZS$ and, if $V\in\Simp_{R}$ is such that $VK = V$, then there is some $U\in\ZS$ such that $VH_{U} = V$. This implies that $H_{V}\sbs H_{U}$, therefore $V\preccurlyeq U$ by Proposition \ref{pro extendedtrace} and hence $V\in\ZS$. We conclude that $\ZS = \G(K)$, so that $\G$ is bijective.

(2)$\Rightarrow$(3). Assume that every $U\in\Simp_{R}$ has its extended trace $H_{U}$ and, accor\-ding to Proposition \ref{pro extendedtrace}, for each $U\in\Simp_{R}$ let us choose an idempotent $x_{U}\in H_{U}$ such that $x_{U}R/x_{U}L_{h(U)-1} \is U$ and $H_{U} = Rx_{U}R$. Then $\ZA = \{x_{U}R\mid U\in\Simp_{R}\}$ is a complete and irredundant subset of $\ESFP_{R}$; we claim that it is also order representative. Given $U, V\in\Simp_{R}$, set $\a+1 = h(U)$, $\b+1 = h(V)$, assume that $\a<\b$ and suppose that $U\not\prec V$. Then $UH_{V} = 0$ by Proposition \ref{pro extendedtrace}. If $U\lesssim x_{V}R/x_{V}L_{\a}$, since $x_{V}R/x_{V}L_{\a} = (x_{V}R/x_{V}L_{\a})H_{V}$ and $R$ is regular, we get that $UH_{V} = U$: a contradiction. Thus $U\not\lesssim x_{V}R/x_{V}L_{\a}$ and therefore $x_{U}R$ is order-selective.

Suppose now that $U\prec V$ and let us prove that, consequently, $x_{U}R\lesssim x_{V}R$. If $S\in\Simp_{R}$ and $S\lesssim x_{U}R/x_{U}L_{h(S)-1}$, then $S = S(Rx_{U}R) = SH_{U}$ and therefore, by using Proposition \ref{pro extendedtrace} we see that $S\prec U\prec V$. Thus we can apply Lemma \ref{lem extendedtrace} to $B = x_{U}R$ and $A = x_{V}R$ in order to obtain that $x_{U}R\lesssim x_{V}R$.

(3)$\Rightarrow$(2). Assume that $\ESFP_{R}$ has an order-representative subset $\ZA$ and let $U\in\Simp_{R}$. Then there is a unique $A\in\ZA$ such that $A/AL_{h(U)-1}\is U$. In turn, since $A$ is isomorphic to a finite direct sum of principal right ideals of $R$, then there is an idempotent $e\in L_{h(U)}$ such that $eR\lesssim A$ and $eR/eL_{h(U)-1}\is U$. Moreover, since $A$ is order-selective, then $eR$ is order-selective as well. We claim that $H = ReR$ is t5he extended trace of $U$. First, it is clear that $U = UH$. Thus, in order to establish our claim, it is sufficient to prove that $H$ is minimal among those ideals $I$ of $R$ such that $U = UI$. Let $K$ be an ideal of $R$ contained in $H$ such that $U = UK$ and note that, consequently, there is an idempotent $f\in K$ such that $fR/fL_{h(U)-1}\is U$. In order to prove that $H = K$ it is sufficient to show that
\begin{equation}\label{eq extendedtrace3}
    H\cap L_{\a} = K\cap L_{\a}
\end{equation}
for every ordinal $\a\le h(U)$ and, by the assumption on $K$, we only have to show that the first member is contained in the second. Since this is obvious if $\a = 0$, let us consider an ordinal $\a>0$ and suppose, inductively, that $H\cap L_{\b} = K\cap L_{\b}$ whenever $\b<\a$, If $\a$ is a limit ordinal, then \eqref{eq extendedtrace3} immediately follows. Assume that $\a = \b+1$ for some $\b$, let $x\in H\cap L_{\a}$ be such that $x\nin L_{\b}$ and suppose first that $xR/xL_{\b}$ is simple. Since $xR\sbs H = ReR$, it follows from \cite[Corollary 2.23]{Good:3} that $xR\lesssim (eR)^{n}$ for some $n\in\BN$. As a result, since $xR/xL_{\b}$ is a simple and projective right $R/L_{\b}$-module, we infer that $xR/xL_{\b}\lesssim eR/eL_{\b}$. Inasmuch as $eR$ is order-selective, then $xR/xL_{\b}\preccurlyeq U\is fR/fL_{h(U)-1}$ and, accordingly, we have that $xR/xL_{\b}\lesssim fR/fL_{\b}$. By \cite[Proposition 2.20]{Good:3} there are decompositions
\[
xR = x'R\oplus x''R, \qquad fR = f'R\oplus f''R,
\]
for some idempotents $x',x''\in xR$ and $f',f''\in fR$, such that $x'R\is f'R$ and $x''R = x''L_{\b}$. Consequently, from the inductive assumption it follows that $x''\in H\cap L_{\b} = K\cap L_{\b}$. Moreover $Rx'R = Rf'R\sbs K$ and we conclude that $x\in K$. If $xR/xL_{\b}$ is \emph{not} simple then, by Lemma \ref{lem decompsimple}, there are pairwise orthogonal idempotents $x_{1},\ldots,x_{n}\in H$ such that $x = x_{1}+\cdots+x_{n}$ and each $x_{i}R/x_{i}L_{\b}$ is simple. The above argument allows us to conclude that $x\in K$ and the proof is complete.
\end{proof}

\section{The natural partial order of $\KO(R)$ when $R$ is a unit-regular, semiartinian and very well behaved ring.}

In this last section we aim to describe in an explicit way the canonical order structure of the group $\KO(R)$ for a semiartinian, unit-regular and very well behaved ring $R$. As we shall see, in this case $\KO(R)$ turns out to be a dimension group whose order structure is completely determined by the partially ordered set $\Prim_{R}$.

Given a partially ordered set $I$, let us denote by $G(I)$ the free abelian group with $I$ as a basis; we represent each element $x\in G(I)$ as a unique linear combination of elements of $I$ with coefficients in $\BZ$ and, if $i\in I$, we denote by $x_{i}$ the coefficient of $i$. As usual, the \emph{support} of an element $x\in G(I)$ is the set $\Supp(x)\defug \{i\in I\mid x_{i}\ne 0\}$. Let us consider the following subset of $G(I)$:
\begin{equation}\label{eq M(I)}
M(I) \defug \{x\in G(I)\mid x_{i}>0 \text{ if $i$ is a maximal element of $\Supp(x)$}\}.
\end{equation}
As we are going to see, $M(I)$ is actually a submonoid of $G(I)$. We recall that an abelian monoid $M$ is said to be \emph{conical} in case the property $x+y = 0$ for two elements $x,y\in M$ implies that $x=y=0$. If $G$ is a pre-ordered abelian group with positive cone $G^{+}$, then $G^{+}$ is conical if and only if the pre-order of $G$ is a partial order.

\begin{pro}\label{pro dimgrp}
For every partially ordered set $I$, the subset $M(I)$ of $G(I)$ defined in \eqref{eq M(I)} is a conical submonoid.
\end{pro}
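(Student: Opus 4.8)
The plan is to verify the two assertions---that $M(I)$ is closed under addition (and contains $0$) and that it is conical---by a direct analysis of how maximal elements of supports behave. First I would record the easy observations: the zero element lies in $M(I)$ vacuously, since its support is empty and the defining condition is a universally quantified statement over an empty set. For closure under addition, take $x,y\in M(I)$ and set $z = x+y$. The key combinatorial fact is that $\Supp(z)\sbs\Supp(x)\cup\Supp(y)$, with equality failing only where coefficients cancel. The crucial point is to control the maximal elements of $\Supp(z)$: if $i$ is maximal in $\Supp(z)$, I want to conclude $z_{i} = x_{i}+y_{i}>0$.

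The heart of the closure argument is the following claim. Let $i$ be a maximal element of $\Supp(z)$. Then $i$ belongs to $\Supp(x)\cup\Supp(y)$; say $i\in\Supp(x)$ (the other case is symmetric). I would then argue that $i$ is maximal in $\Supp(x)$: if some $j\in\Supp(x)$ satisfied $i<j$, then since $j\in\Supp(x)\sbs\Supp(x)\cup\Supp(y)$ and $z_{i}\ne 0$ forces $i$ into consideration, one needs to see that $j$ cannot be ``swallowed'' by cancellation in a way that violates maximality of $i$ in $\Supp(z)$. More carefully, among all elements of $\Supp(x)\cup\Supp(y)$ that are $\ge i$, pick a maximal one, call it $k$; then $k$ is maximal in whichever of $\Supp(x)$, $\Supp(y)$ contains it, so the corresponding coefficient is strictly positive, and in fact I would show $k\in\Supp(z)$ with $z_{k}>0$, contradicting maximality of $i$ in $\Supp(z)$ unless $k=i$. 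This pins down that $i$ is maximal in each of $\Supp(x)$ and $\Supp(y)$ that contains it, whence $x_{i}\ge 0$ and $y_{i}\ge 0$ with at least one strict, giving $z_{i}>0$ as required.

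For conicality, suppose $x,y\in M(I)$ with $x+y = 0$, i.e. $y = -x$. If $x\ne 0$, let $i$ be a maximal element of $\Supp(x) = \Supp(y)$. Then the defining condition applied to $x$ gives $x_{i}>0$, while applied to $y$ it gives $y_{i}>0$; but $y_{i} = -x_{i}<0$, a contradiction. Hence $x = 0$ and therefore $y = 0$, proving conicality.

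The main obstacle I anticipate is the closure step, specifically handling the cancellation phenomenon where an index $i$ maximal in $\Supp(z)$ might fail to be maximal in $\Supp(x)$ or $\Supp(y)$ individually, because a strictly larger index present in both $x$ and $y$ could have cancelled. The ``pick a maximal element $k\ge i$ in $\Supp(x)\cup\Supp(y)$'' device is designed precisely to defeat this: such a $k$ is automatically maximal in the support(s) containing it, so its coefficients are positive and cannot cancel, forcing $k\in\Supp(z)$ and hence $k=i$ by maximality of $i$ in $\Supp(z)$. The only subtlety worth a line of care is that $\Supp(x)\cup\Supp(y)$ is finite, so maximal elements above $i$ genuinely exist; this is immediate since every element of $G(I)$ has finite support by the definition of the free abelian group on $I$.
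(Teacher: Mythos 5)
Your proof is correct, and the two halves relate to the paper's proof differently. For closure under addition, your argument is essentially the paper's idea but repackaged in one step: the paper runs a two-stage contradiction, taking $j$ maximal in $\Supp(x)$ with $j>i$, deducing $j\nin\Supp(x+y)$ and hence $y_{j}<0$, then escalating to $k$ maximal in $\Supp(y)$ with $k>j$ and deriving the contradiction $0=(x+y)_{k}=y_{k}>0$; your device of choosing $k$ maximal among the elements of $\Supp(x)\cup\Supp(y)$ lying above $i$ collapses this escalation, since such a $k$ is automatically maximal in whichever of the two supports contains it (any $j>k$ in either support satisfies $j>i$, contradicting the choice of $k$), whence $z_{k}>0$, so $k\in\Supp(z)$ and maximality of $i$ in $\Supp(z)$ forces $k=i$. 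This is a genuine streamlining, and you correctly flag the one point requiring care (finiteness of supports, so the maximal $k$ exists). Where you truly diverge is conicality: the paper proves it by induction on the cardinality of a finite subset $F\sps\Supp(x)$, stripping off a maximal element $m$ of $F$ at each stage via $x_{m}\ge 0$ and $-x_{m}\ge 0$, whereas you argue directly that if $x\ne 0$ and $i$ is maximal in $\Supp(x)=\Supp(-x)$, then $x\in M(I)$ gives $x_{i}>0$ while $-x\in M(I)$ gives $-x_{i}>0$, an immediate contradiction. Your direct argument is shorter, loses no generality, and avoids the induction entirely; the only cosmetic remark is that the intermediate sentence in your closure step (``I would then argue that $i$ is maximal in $\Supp(x)$\ldots'') is subsumed by the $k$-device and could be deleted.
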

\begin{proof}
It is clear from the definition \eqref{eq M(I)} that $0\in M(I)$. Let $x,y\in M(I)$ and let $i$ be a maximal element of $\Supp(x+y)$. Then $i\in\Supp(x)$ or $i\in\Supp(y)$. We claim that if $i\in\Supp(x)$ (resp. $i\in\Supp(y)$), then $i$ is maximal in $\Supp(x)$ (resp. $\Supp(y)$). Indeed, suppose that there is some $j\in\Supp(x)$ such that $j>i$. We may assume that $j$ is maximal in $\Supp(x)$. Necessarily $j\nin\Supp(x+y)$, therefore $0 = (x+y)_{j} = x_{j}+y_{j}$. Since $x_{j}>0$, then $y_{j}<0$ and this means that $j\in\Supp(y)$ but $j$ is not maximal in $\Supp(y)$. Let $k$ be a maximal element of $\Supp(y)$ such that $k>j$. Necessarily $k\nin\Supp(x+y)$ and $k\nin\Supp(x)$, therefore $0 = (x+y)_{k} = x_{k}+y_{k} = y_{k}>0$, hence a contradiction. Thus $i$ is maximal in $\Supp(x)$. Now, if $i\in\Supp(x)\cap\Supp(y)$, by the above $i$ is maximal both in $\Supp(x)$ and in $\Supp(y)$, therefore $(x+y)_{i} = x_{i}+y_{i}>0$. If $i\in\Supp(x)$ but $i\nin\Supp(y)$, then $(x+y)_{i} = x_{i}>0$; similarly, if $i\in\Supp(y)$ but $i\nin\Supp(x)$, then $(x+y)_{i} = y_{i}>0$. We conclude that $x+y\in M(I)$ and so $M(I)$ is a submonoid of $G(I)$.

In order to prove that $M(I)$ is conical, let us consider the following statement: given a non-negative integer $n$ and a subset $F$ of $I$ with $|F|=n$, if $x\in G(F)$ is such that both $x$ and $-x$ are in $M(F)$, then $x=0$. Since the statement is obvious if $n=0$, let us consider $n>0$ and let us assume that the statement is true for all subsets of $I$ of cardinality less than $n$. Given a subset $F\sbs I$ with $|F|=n$ and an element $x\in G(F)$ such that both $x$ and $-x$ are in $M(F)$, let us consider a maximal element $m$ of $F$. Then we have that $x_{m}\ge 0$ and $-x_{m} = (-x)_{m}\ge 0$, therefore $x_{m}=0$. Consequently $x\in F\setminus\{m\}$ and hence $x=0$ by the inductive hypothesis.
\end{proof}

Throughout the present section we shall consider the group $G(I)$ partially ordered by choosing the submonoid $M(I)$ as positive cone.

Let $M$ be an abelian monoid and let $x_{1},x_{2},y_{1},y_{2}\in M$ be such that
\begin{equation}\label{eq refinement}
    x_{1}+x_{2} = y_{1}+y_{2}.
\end{equation}
We say that the equality \eqref{eq refinement} has a \emph{(Riesz) refinement} if there exists a matrix $\begin{pmatrix} z_{11} & z_{12} \\ z_{21} & z_{22} \end{pmatrix}$ of elements of $M$ such that $x_{\a} = z_{\a1}+z_{\a2}$ and $y_{\b} = z_{1\b}+z_{2\b}$ for each $\a,\b$; the above matrix is called a
\emph{refinement matrix} for \eqref{eq refinement}; it is standard to use the display
\begin{equation}\label{eq refinementt}
\begin{matrix}
 & \begin{matrix}
                      y_{1} & y_{2}
                    \end{matrix}
   \\
  \begin{matrix}
    x_{1} \\
    x_{2}
  \end{matrix}
 \!\!\!\!\!& \begin{pmatrix}
         z_{11} & z_{12} \\
         z_{21} & z_{22} \\
       \end{pmatrix}

\end{matrix}
\end{equation}
in order to express the stated property, by saying that \eqref{eq refinementt} is a refinement for \eqref{eq refinement}. The monoid $M$ is called a \emph{refinement monoid} if every equality of the form \eqref{eq refinement} between elements of $M$ admits a refinement. It is easy to check that $\BN$ is a refinement monoid. If $x_{1},x_{2},y_{1},y_{2}\in\BZ$ satisfy \eqref{eq refinement}, then it is not difficult to see that there are two refinements as in \eqref{eq refinementt}, where in the first one $z_{11}\ge 0$ and $z_{22}\ge 0$, while $z_{12}\ge 0$ and $z_{21}\ge 0$ in the second. It is well known that if $R$ is a regular ring or, more generally, is an exchange ring, then $\CV(R)$ a refinement monoid. Let $G$ be a partially ordered abelian group, with positive cone $G^{+}$. Then $G^{+}$ is a refinement monoid if and only if $G$ satisfies the \emph{Riesz interpolation property}, namely: given $x_{1},x_{2},y_{1},y_{2}\in G$ such that $x_{\a}\le y_{\b}$ for all $\a,\b$, then there is some $z\in G$ such that $x_{\a}\le z\le y_{\b}$ for all $\a,\b$ (see \cite[Proposition 2.1]{Good:10}). If it is the case, then $G$ is called an \emph{interpolation group}.
The group $G$ is said to be \emph{directed} if it is upward directed or, equivalently, downward directed. The group $G$ is said to be \emph{unperforated} if, given $x\in G$ and a positive integer $n$, the condition $nx\in G^{+}$ implies that $x\in G^{+}$.

Finally, $G$ is called a \emph{dimension group} if it is a directed, unperforated interpolation group (see \cite[Ch. 3]{Good:10}).

\begin{pro}\label{pro unperforated}
If $I$ is a partially ordered set, then the partially ordered abelian group $G(I)$ is a dimension group. Moreover $G(I)$ has an order-unit if and only if $I$ has a finite cofinal subset $F$, in which the case the sum of all elements in $F$ is an order-unit.
\end{pro}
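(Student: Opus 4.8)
The plan is to verify the three defining properties of a dimension group---directedness, unperforation, and the Riesz interpolation property---and then to treat the order-unit statement separately. Two elementary tools will be used throughout. First, for any subset $S\sbs I$ one has $M(I)\cap G(S)=M(S)$, because membership in $M(I)$ only refers to the maximal elements of the (finite) support and to the order relations among them, which are intrinsic to the support; this lets me reduce interpolation to the case of a finite poset. Second, reading the defining condition through its contrapositive yields the handy criterion
\[
(\star)\qquad w\in M(I)\iff \text{every $i\in\Supp(w)$ with $w_i<0$ lies strictly below some $j\in\Supp(w)$,}
\]
since a maximal element of $\Supp(w)$ has nothing above it in the support. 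Directedness and unperforation are then immediate. Given $x$ with finite support $F=\Supp(x)$, the element $p=N\sum_{i\in F}i$ lies in $M(I)$ for every $N>0$, and for $N>\max_i|x_i|$ all coordinates of $p-x$ are strictly positive, so $p-x\in M(I)$ too; hence $x=p-(p-x)$ shows $G(I)=M(I)-M(I)$ is directed. For unperforation, if $nx\in M(I)$ with $n>0$ then $\Supp(nx)=\Supp(x)$ and $(nx)_i$ has the sign of $x_i$, so $(\star)$ for $nx$ is literally $(\star)$ for $x$, whence $x\in M(I)$.

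For the interpolation property I would reduce to a finite poset $I$ via $M(I)\cap G(S)=M(S)$ and induct on $|I|$ by deleting a maximal element $m$, writing $I'=I\setminus\{m\}$. The subgroup $L=\{w\mid w_m=0\}\is G(I')$ is a convex, directed subgroup (an order-ideal): if $0\le w\le v$ with $v\in L$, then $(\star)$ applied to $w$ and to $v-w$ forces $w_m=0$; moreover the induced order on $L$ is exactly that of $G(I')$, an interpolation group by induction. The quotient $G(I)/L\is\BZ$, via $w\mapsto w_m$, carries the standard total order, since no element of $M(I)$ has negative $m$-coordinate. Given data $x_1,x_2\le y_1,y_2$, I would first interpolate the $m$-coordinates in $\BZ$ by taking $z_m=\max_\alpha (x_\alpha)_m$, which satisfies $(x_\alpha)_m\le z_m\le(y_\beta)_m$, and then solve the residual interpolation for the lower parts inside $G(I')$ by the inductive hypothesis. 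The feature that glues the two levels together is that whenever the $m$-coordinate of a difference $z-x_\alpha$ or $y_\beta-z$ is strictly positive, the maximal basis element $m$ sits in the support above the entire down-set $\{i\mid i<m\}$, so by $(\star)$ it covers every negative coefficient there; only the coordinates incomparable to $m$ need to be controlled within $G(I')$.

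The main obstacle is precisely this extension step. The naive move---simply deleting the $m$-coordinate from each datum---fails, because an element of $M(I)$ may carry a negative coefficient on some $i<m$ that is held in check \emph{only} by $m$, so its restriction to $G(I')$ need not be positive (already visible for a two-element chain, where $-i+m\in M(I)$ but $-i\nin M(I')$). The remedy is to keep track of the covering role of $m$: one performs the top-level refinement in $\BN$---legitimate because all four $m$-coordinates are $\ge 0$, $m$ being maximal---and then routes the $m$-dependent negative coefficients of the lower parts into exactly those cells that received a positive $m$-entry, leaving a genuinely positive interpolation problem in $G(I')$ for the $m$-incomparable coordinates; equivalently, one may invoke the interpolation-by-order-ideal machinery of \cite{Good:10}. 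This routing is consistent because a datum whose $m$-coordinate vanishes already lies in $M(I')$ and needs no covering, whereas a datum with positive $m$-coordinate contributes a positive entry in its row (resp.\ column) of the $\BN$-refinement, guaranteeing a covered cell wherever one is required.

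Finally, the order-unit statement I would settle directly with $(\star)$. If $F$ is a finite cofinal subset and $u=\sum_{f\in F}f$, then $u\in M(I)$; for any $x$ and any $N>\max_i|x_i|$, the only negative coordinates of $Nu-x$ occur at points $i\in\Supp(x)\setminus F$ with $x_i>0$, and cofinality yields $f\in F$ with $i<f$ and $(Nu-x)_f>0$, so $(\star)$ gives $x\le Nu$ and $u$ is an order-unit. Conversely, if $u$ is an order-unit, then for each $i\in I$ choose $n$ with $i\le nu$ and apply $(\star)$ to $nu-i\in M(I)$: either $u_i>0$, so $i\in\Supp(u)$, or some $j>i$ satisfies $u_j\ne0$, so $j\in\Supp(u)$. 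Thus the finite set $\Supp(u)$ is cofinal in $I$, which both completes the equivalence and confirms that $\sum_{f\in F}f$ is the asserted order-unit.
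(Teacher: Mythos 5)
Your $(\star)$ criterion, the directedness and unperforation arguments, the reduction of interpolation to finite $I$ via $M(I)\cap G(S)=M(S)$, and the entire order-unit equivalence are all correct, and on those points you essentially match the paper (which proves the order-unit part by the same support argument and gets directedness from $I\sbs M(I)$ together with \cite[Proposition 1.3]{Good:10}). The genuine gap is in the interpolation step, which is the heart of the proposition. You induct by deleting a \emph{maximal} element $m$, whereas the paper deletes a \emph{minimal non-maximal} element; the paper's choice is exactly what makes the truncation $x\mapsto x-x_{m}m$ land in $M(I\setminus\{m\})$, so the inductive refinement applies directly and all that remains is the sign bookkeeping at the $m$-entries (the properties $P_{\a\b}$ and Cases 1--3). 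With your choice the truncation is not positive --- you correctly notice this --- but the proposed remedy (``route the $m$-dependent negative coefficients into cells that received a positive $m$-entry'') is only a plan, and the two sentences offered in its support do not establish it. Concretely: a $2\times 2$ refinement matrix has a single degree of freedom ($u_{11}=v$, $u_{12}=x_{1}'-v$, $u_{21}=y_{1}'-v$, $u_{22}=x_{2}'-y_{1}'+v$), so $v$ enters the four cells with opposite signs and you cannot add mass to one cell without disturbing the row and column containing it; cells whose $\BN$-refinement entry $a_{\a\b}$ is zero must satisfy the full $M(I')$ condition, including on the down-set $D=\{i\in I\mid i<m\}$, while cells with $a_{\a\b}>0$ need only the weaker cone $N=\{w\in G(I')\mid w|_{I'\setminus D}\in M(I'\setminus D)\}$; and when a datum has zero $m$-coordinate, its negative coefficients on $D$ may be held in check by elements of $I'\setminus D$, so its $D$-mass cannot be stripped off and routed freely. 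Reconciling these simultaneous row/column constraints is precisely the case analysis you have omitted; it can be carried out (for instance using $N\is M(I'\setminus D)\oplus G(D)$ and a further Riesz decomposition inside $M(I'\setminus D)$), but it is work of the same order as the paper's Cases 1--3, not a corollary of what you wrote.

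The fallback ``equivalently, one may invoke the interpolation-by-order-ideal machinery of \cite{Good:10}'' does not close the gap either: that book shows interpolation passes \emph{to} order-ideals and quotients, and there is no converse extension theorem --- an extension of an interpolation group by an interpolation group need not have interpolation --- nor is your extension lexicographic (over a strictly positive $m$-coordinate the cone requires $w'\in N$, not $w'$ arbitrary), so no off-the-shelf result applies. Your structural observations ($L=\{w\mid w_{m}=0\}$ is convex precisely because $m$ is maximal, $G(I)/L\is\BZ$, and the top coordinates interpolate via $z_{m}=\max_{\a}(x_{\a})_{m}$) are correct and would frame a genuinely different proof from the paper's, but as written the decisive combinatorial step is missing.
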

\begin{proof}
Since $G(I)$ is partially ordered and, as a group, is generated by $M(I)$ (because $I\sbs M(I)$), then $G(I)$ is directed by \cite[Proposition 1.3]{Good:10}.

Let $x\in G(I)$, let $n$ be a positive integer and suppose that $nx\in M(I)$. Given a maximal element $i$ of $\Supp(ny) = \Supp(y)$, we have that $nx_{i} = (nx)_{i}>0$, therefore $x_{i}>0$. Thus $G(I)$ is unperforated.

In order to prove that $G(I)$ is an interpolation group we will show that $M(I)$ is a refinement monoid. To this purpose it is clearly sufficient to assume that $I$ is finite, so we will proceed by induction on the cardinality of $I$. If $I$ has just one element, then $M(I)$ is order isomorphic to $\BN$ (with the natural ordering) and so is a refinement monoid. Given a positive integer $n$, assume that $M(I)$ is a refinement monoid whenever $|I| = n$, let $I$ be a partially ordered set with $|I| = n+1$ and let $x_{1}, x_{2},y_{1}, y_{2}\in M(I)$ be such that
\begin{equation}\label{eq dimgrp4}
x_{1}+x_{2}=y_{1}+y_{2}.
\end{equation}
Note that the above equality is equivalent to the condition
\[
(x_{1})_{i}+(x_{2})_{i}=(y_{1})_{i}+(y_{2})_{i}, \quad\text{for all $i\in I$}.
\]
Of course, we may assume that $\Supp(x_{1})\cup\Supp(x_{2})\cup\Supp(y_{1})\cup\Supp(y_{2}) = I$. If $I$ is an antichain, that is every element is minimal and maximal, then $M(I)$ is order isomorphic to the product of $n+1$ copies of $\BN$ with the product ordering, which is a refinement monoid. Otherwise, let us choose a minimal element $m$ of $I$ which is not maximal and set $J\defug I\setminus \{m\}$; moreover, for every $x\in G(I)$, set $x'\defug x-x_{m}m$ and note that $x'\in M(J)$. We infer from \eqref{eq dimgrp4} that
\[
x'_{1}+x'_{2}=y'_{1}+y'_{2}.
\]
By the inductive hypothesis the above equality has a refinement
\[
\begin{matrix}
 & \begin{matrix}
                      y'_{1} & y'_{2}
                    \end{matrix}
   \\
  \begin{matrix}
    x'_{1} \\
    x'_{2}
  \end{matrix}
 \!\!\!\!\!& \begin{pmatrix}
         u_{11} & u_{12} \\
         u_{21} & u_{22} \\
       \end{pmatrix},
\end{matrix}
\]
where $u_{11},u_{12},u_{21},u_{22}\in M(J)$. As far as the equality
\[
(x_{1})_{m}+(x_{2})_{m}=(y_{1})_{m}+(y_{2})_{m}
\]
is concerned, it admits always some refinement
\begin{equation}\label{eq refin1}
\begin{matrix}
 & \,\,\,(y_{1})_{m} & (y_{2})_{m}\\
\,\,\,\begin{matrix}
(x_{1})_{m}\\
    (x_{2})_{m}
     \end{matrix} &
     \!\!\!\!\!\left(\,\,\,\,\begin{matrix}a_{11}\\
 a_{21}
  \end{matrix}\right. &
    \left. \,\,\,\,\,\,\,\begin{matrix}a_{12} \\
     a_{22}
     \end{matrix}\,\,\,\right)
 \end{matrix}
    \end{equation}
in $\BZ$. Thus, if we define the element $z_{\a\b}\in G(I)$, for every $\a,\b\in\{1,2\}$, by setting
\[
(z_{\a\b})_{i} = \begin{cases}(u_{\a\b})_{i}, \text{ if $i\in J$} \\
                               a_{\a\b}, \text{ if $i=m$},
                               \end{cases}
\]
then
\[
\begin{matrix}
 & \begin{matrix}
                      y_{1} & y_{2}
                    \end{matrix}
   \\
  \begin{matrix}
    x_{1} \\
    x_{2}
  \end{matrix}
 \!\!\!\!\!& \begin{pmatrix}
         z_{11} & z_{12} \\
         z_{21} & z_{22} \\
       \end{pmatrix}
\end{matrix}
\]
is a refinement in $G(I)$ for the equality \eqref{eq dimgrp4}; however, we are looking for a refinement in $M(I)$. To this purpose, for every ordered pair $(\a,\b)$ let us denote by $P_{\a\b}$ the property
\[
\text{``\,\,if $i\in I$ and $i>m$, then $(u_{\a\b})_{i} = 0$\,\,''}
\]
and observe that if $P_{\a\b}$ is false for some $\a,\b$ and \eqref{eq refin1} is \emph{any} refinement in $\BZ$, then $z_{\a\b}\in M(I)$. Indeed, there is some $i\in I$ such that $i>m$ and $(u_{\a\b})_{i} >0$; consequently, if $j$ is a maximal element of $\Supp(z_{\a\b})$, then $j\ne m$ and therefore $j\in J$. By the inductive assumption $(z_{\a\b})_{j} = (u_{\a\b})_{j}>0$. As a first consequence, if $P_{\a\b}$ is false for \emph{all} $\a,\b$ then \eqref{eq refin1} has a refinement in $M(I)$; on the other hand, it is not the case that $P_{\a\b}$ is true for every $\a,\b$, otherwise this would mean that $(x_{\a})_{i} = (y_{\b})_{i} = 0$ for all $\a,\b$ and $i>m$; thus $m$ would be a maximal element of $I$, contradicting our assumption. We will reach our goal by showing that, depending on the pairs $(\a,\b)$ for which $P_{\a\b}$ holds, there exist in $\BZ$ a refinement \eqref{eq refin1} such that $z_{\a\b}\in M(I)$ for all such pairs $(\a,\b)$.

Up to a possible exchange in \eqref{eq dimgrp4} between the summands of one or both members, there are three case to be considered.

\underline{Case 1}: $P_{\a\b}$ holds only for $(\a,\b)=(1,1)$, or only for $(\a,\b)=(1,1)$ \underline{and} $(\a,\b)=(2,2)$. Let us choose a refinement \eqref{eq refin1} where $a_{11}\ge 0$ and $a_{22}\ge 0$. If $a_{11}=0$, then $z_{11}=u_{11}\in M(J)\sbs M(I)$. If $a_{11}>0$, then $m$ is a maximal element of $\Supp(z_{11})$ and $(z_{11})_{m} = a_{11}>0$; if $j$ is a maximal element of $\Supp(z_{11})$ different from $m$ then, as seen previously, $(z_{11})_{j} = (u_{11})_{j}>0$. Thus $z_{11}\in M(I)$. A similar argument applies equally if $P_{\a\b}$ holds for $(\a,\b)=(1,1)$ and $(\a,\b)=(2,2)$.

\underline{Case 2}: $P_{\a\b}$ holds only for $(\a,\b)=(1,1)$ \underline{and} $(\a,\b)=(1,2)$. In this case, since $(x'_{1})_{i} = (u_{11})_{i}+(u_{12})_{i}$ for all $i$, if $(x_{1})_{m}\ne 0$, then $m$ must be a maximal element of $\Supp(x_{1})$, therefore $(x_{1})_{m}>0$. Let us choose the refinement \eqref{eq refin1} where
\[
\begin{pmatrix}
a_{11} & a_{21} \\
a_{12} & a_{22}
\end{pmatrix}
=
\begin{pmatrix}
(x_{1})_{m} & 0 \\
(x_{2})_{m}-(y_{2})_{m} & (y_{2})_{m}
\end{pmatrix}.
\]
If $(x_{1})_{m}=0$, then $(z_{11})_{m} = a_{11}=(x_{1})_{m} =0$ and so $z_{12}=u_{12}\in M(J)\sbs M(I)$. If $(x_{1})_{m}>0$, then $(z_{11})_{m} = a_{11}=(x_{1})_{m}>0$; consequently $(z_{11})_{m}m\in M(I)$ and hence $z_{11} = z'_{11}+ (z_{11})_{m}m\in M(I)$. Concerning $z_{12}$ we have that $(z_{12})_{m} = a_{12}=0$ and so $z_{12}=u_{12}\in M(J)\sbs M(I)$.

\underline{Case 3}: $P_{\a\b}$ holds for $(\a,\b)=(1,1)$, $(\a,\b)=(1,2)$, $(\a,\b)=(2,2)$ but not for $(\a,\b)=(2,1)$. As seen for Case 2, if $(x_{1})_{m}\ne 0$, then $m$ must be a maximal element of $\Supp(x_{1})$ and therefore $(x_{1})_{m}>0$. Similarly, since $(y'_{2})_{i} = (u_{12})_{i}+(u_{22})_{i}$ for all $i$, if $(y_{2})_{m}\ne 0$, then $m$ must be a maximal element of $\Supp(y_{2})$ and therefore $(y_{2})_{m}>0$. By choosing for \eqref{eq refin1} the same refinement we used in Case 2, we obtain again that $z_{11}$ and $z_{12}$ are in $M(I)$. By proceeding on $z_{22}$ in the same manner as we did in Case 2 for $z_{11}$, we see that $z_{22}\in M(I)$.

The proof that $M(I)$ is a refinement monoid is now complete.

Finally, without any assumption on the cardinality of $I$, suppose that $I$ has a finite cofinal subset $F$ and let $u$ be the element of $G(I)$ which is the sum of all elements in $F$. We observe that if $i\le j$ in $I$, given $a,b\in\BN$ with $a\le b$, then it is true that $ai\le bj$ and $-ai\le bj$, because both $bj-ai$ and $bj-(-ai) = bj+ai$ are in $M(I)$. As a first straightforward consequence, since the partial ordering of $G(I)$ is translation-invariant, if $v$ is any linear combination of elements of $F$, namely an element of $G(F)$, then $v\le au$ for a suitable positive integer $a$. Let $x\in G(I)$ and for every $i\in J$, let us choose an element $u_{(i)}\in F$ such that $i\le u_{(i)}$. By the above we have that
\[
x = \sum_{i\in J}x_{i}i \le \sum_{i\in J}|x_{i}|u_{(i)} \le au
\]
for a suitable positive integer $a$. This shows that $u$ is an order unit for $G(I)$.

Conversely, assume that $G(I)$ has an order unit $u$ and let $i\in I\setminus \Supp(u)$. Then there is a positive integer $a$ such that $x = au-i\in M(I)$. As a consequence $i$ is not a maximal element of $\Supp(x) = \Supp(u)\cup\{i\}$ and therefore $j>i$ for some $j\in\Supp(u)$. This shows that $\Supp(u)$ is a finite cofinal subset of $I$ and the proof is complete.
\end{proof}

\begin{re}
In \cite{AraGood:10} Ara and Goodearl say that an abelian monoid $M$ is \emph{tame} in case M is an inductive limit of some inductive system of finitely generated refinement monoids. Since our refinement monoid $M(I)$ is cancellative and unperforated, it follows from \cite[Theorem 2.14]{AraGood:10} that $M(I)$ is tame.
\end{re}

Assume now that $R$ is a semiartinian, unit-regular and very well behaved ring. Then the unit-regularity condition implies that $[A] = \overline{A}$ for all $A\in\FP_{R}$ and the canonical pre-order of $\KO(R)$ is a partial order: if $A,B\in\FP_{R}$, then $[A]\le [B]$ if and only if $A\lesssim B$.  By Theorem \ref{theo extendedtrace} we can choose an order-representative subset $\ZA$ in the class $\ESFP_{R}$. In turn, Proposition \ref{pro simpgenKO} tells us that $\KO(R)$ is free with $[\ZA]$ as a basis. By \eqref{eq order-selective} we have that $[\ZA]$, with the partial order induced by those of $\KO(R)$, is order isomorphic to $\Simp_R$ and hence to $\Prim_R$.

\begin{theo}\label{theo mainKO}
Let $R$ be a semiartinian, unit-regular, very well behaved ring and let $\ZA$ be an order-representative set $\ZA$ for the class $\ESFP_{R}$. Then
\[
\KO(R)^{+} = M([\ZA])
\]
and therefore $\KO(R)$ isomorphic, as a partially ordered abelian group, to $G(\Prim_R)$.
\end{theo}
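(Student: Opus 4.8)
The plan is to prove the monoid equality $\KO(R)^{+} = M([\ZA])$ directly, viewing both as submonoids of the free abelian group $\KO(R) = \bigoplus_{A\in\ZA}\BZ[A]$; once this is done the identity map of $\KO(R)$ is the desired order isomorphism, and since $[\ZA]$ is order isomorphic to $\Prim_{R}$ (by \eqref{eq order-selective} and Theorem \ref{theo extendedtrace}) it transports to an order isomorphism $\KO(R)\is G(\Prim_{R})$. Throughout I write $U_{A} = A/AL_{h(A)-1}$ for $A\in\ZA$, so that $A\mapsto U_{A}$ matches $\ZA$ with $\Simp_{R}$, and I use repeatedly that for members of $\Simp_{R}$ the strict order $U\prec V$ forces $h(U)<h(V)$ (an immediate consequence of Corollary \ref{cor extende3dtrace} together with Proposition \ref{pro extendedtrace}, (3)), so that distinct simples of equal height are incomparable.

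For the inclusion $M([\ZA])\sbs\KO(R)^{+}$ I would first isolate a convenient set of monoid generators. Call $g\in G([\ZA])$ a \emph{basic positive element} if $g = [A] - \sum_{i} n_{i}[B_{i}]$ with $A,B_{i}\in\ZA$, all $B_{i}<A$ in the order of $\ZA$, and $n_{i}\in\BN$; each such $g$ lies in $M([\ZA])$ since $A$ is the unique maximal element of its support and has coefficient $1$. The key point is that every basic positive element already belongs to $\KO(R)^{+}$: because $B_{i}<A$ gives $B_{i}\lesssim A$ and $h(B_{i})<h(A)$, Corollary \ref{cor orderselect2} yields $B_{1}^{n_{1}}\oplus\cdots\oplus B_{r}^{n_{r}}\lesssim A$, whence $g = [A]-\sum_{i}n_{i}[B_{i}] = [C]$ for a suitable $C\in\FP_{R}$. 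It then remains to verify the purely combinatorial fact that the basic positive elements generate $M([\ZA])$ as a monoid. I would prove this by induction on $\sum_{A}|x_{A}|$ for $x\in M([\ZA])$: choosing a maximal element $m$ of $\Supp(x)$ (so $x_{m}>0$) and subtracting the basic positive element $g = m - \sum\{\,|x_{i}|\,i \mid i<m,\ x_{i}<0\,\}$, one checks that $x-g$ again lies in $M([\ZA])$ (the only coefficients altered are that of $m$, lowered by one, and those of the negative elements below $m$, which are zeroed) and has strictly smaller $\ell^{1}$-norm. Since $\KO(R)^{+}$ is a submonoid, this gives $M([\ZA])\sbs\KO(R)^{+}$.

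The harder inclusion is $\KO(R)^{+}\sbs M([\ZA])$, and this is where the main obstacle lies: given $P\in\FP_{R}$ with $[P] = \sum_{A}x_{A}[A]$ and $B$ maximal in $\Supp([P])$, I must show $x_{B}>0$, yet $x_{B}$ is not a naive socle multiplicity (lower simples occur with infinite multiplicity inside higher modules). The device is to isolate $x_{B}$ by passing to a quotient. Since $R$ is very well behaved, the upper subset $\ZS = \{\,V\in\Simp_{R}\mid U_{B}\preccurlyeq V\,\}$ equals $\Simp_{R/H}$ for a unique ideal $H$; let $\map{\f}{R}{R/H}$ be the projection. Maximality of $B$ means no $A\ne B$ in the support satisfies $B\preccurlyeq A$, so $\KO(\f)$ annihilates $[A]$ for every such $A$ and sends $[B]$ to the basis element $[U_{B}]$ of $\KO(R/H)$ (in $R/H$ the module $U_{B}$ is the minimum of $\Simp_{R/H}$, hence simple projective of height one, so $B/BH\is U_{B}$). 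Therefore $x_{B}[U_{B}] = \KO(\f)([P]) = [P/PH]\in\KO(R/H)^{+}$. Now $R/H$ is again semiartinian and regular, and each of its factor rings is a factor ring of $R$, hence directly finite; by Theorem \ref{theo main} the ring $R/H$ is unit-regular, so the canonical preorder of $\KO(R/H)$ is a partial order and $\KO(R/H)^{+}$ is conical. If $x_{B}\le 0$ then both $x_{B}[U_{B}]$ and $-x_{B}[U_{B}] = |x_{B}|[U_{B}]$ lie in $\KO(R/H)^{+}$, forcing $x_{B}[U_{B}] = 0$ and hence $x_{B} = 0$; but $x_{B}\ne 0$ because $B\in\Supp([P])$, so $x_{B}>0$.

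Combining the two inclusions gives $\KO(R)^{+} = M([\ZA])$, whereupon the identification of $\KO(R)$ with the free group $G([\ZA])$ becomes an order isomorphism; transporting along the poset isomorphism $[\ZA]\cong\Prim_{R}$ produces the order isomorphism $\KO(R)\is G(\Prim_{R})$ (restricting to $\CV(R)\is M(\Prim_{R})$). I expect the combinatorial generation step and the coefficient-isolation step to be the two places needing care; of these the latter — exhibiting $x_{B}$ as a genuine projective class over a well-chosen unit-regular quotient — is the crux of the argument.
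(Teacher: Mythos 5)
Your proposal is correct, but for the hard inclusion $\KO(R)^{+}\sbs M([\ZA])$ it takes a genuinely different route from the paper's. The paper argues by transfinite induction on $h(B)$: it matches the top Loewy layer of $B$ with $A = A_{1}^{n_{1}}\oplus\cdots\oplus A_{r}^{n_{r}}$, reduces to showing $[A]-[A'']\in M([\ZA])$, and excludes bad basis expansions by constructing an impossible strictly increasing chain $[D_{j_{1}}]<[C_{i_{1}}]<[D_{j_{2}}]<\cdots$ inside a finite set, using order-selectivity at every step; the very-well-behaved hypothesis enters only through Theorem \ref{theo extendedtrace}, i.e.\ through the existence of $\ZA$. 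You instead invoke very-well-behavedness a second time, via surjectivity of $\F$, to realize the upper set $\{V\mid U_{B}\preccurlyeq V\}$ as $\Simp_{R/H}$, and you extract the coefficient $x_{B}$ as $[P/PH]\in\KO(R/H)^{+}$, concluding by conicality of the positive cone over the unit-regular quotient; this eliminates both the transfinite induction and the chain argument and makes the role of the very-well-behaved hypothesis transparent (your detour through Theorem \ref{theo main} to get unit-regularity of $R/H$ works, though it also follows directly from \cite[Corollary 4.7]{Good:3}, factor rings of unit-regular rings being unit-regular). Two assertions you only gesture at do need short proofs, but both are fillable with the paper's tools: (i) that $U_{B}\not\preccurlyeq U_{A}$ forces $A=AH$ — this holds because every simple subquotient $V$ of the finitely generated projective $A$ satisfies $V\lesssim A/AL_{h(V)-1}$ (replace the top module of the subquotient by a cyclic submodule $mR$, which is a direct summand of $A$ by regularity, and use the $R/L_{h(V)-1}$-projectivity of $V$), hence $V\preccurlyeq U_{A}$ by order-selectivity, while every simple subquotient of $A/AH$ lies in $\Simp_{R/H}$; and (ii) that $B/BH\is U_{B}$ — cleanest via $L_{h(B)-1}\sbs H$ (any $V\succcurlyeq U_{B}$ has $h(V)\ge h(B)$, so $VL_{h(B)-1}=0$), which makes $B/BH$ a nonzero quotient of the simple $B/BL_{h(B)-1}$. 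Your easy inclusion coincides with the paper's (both rest on Corollary \ref{cor orderselect2}), and you in fact supply the monoid-generation induction that the paper merely asserts; your height-monotonicity remark ($U\prec V$ implies $h(U)<h(V)$) is true and is also exactly what justifies the paper's unproved claim that $[A_{1}],\ldots,[A_{r}]$ form an antichain.
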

\begin{proof}
Firstly we observe that $M([\ZA])$ is generated by all elements $[A]$ for $A\in \ZA$, together with all elements of the form
\[
[A]-n_{1}[B_{1}]-\cdots-n_{r}[B_{r}],
\]
where $n_{1},\ldots,n_{r}$ are positive integers, $A,B_{1},\ldots,B_{r}\in\ZA$ and $B_{i}\lesssim A$ for all $i\in\{1,\ldots,r\}$. If we consider an element of the second type, according to Corollary \ref{cor orderselect2} there is a suitable $A'\in\FP_{R}$ such that
\[
A\is A'\oplus B_{1}^{n_{1}}\oplus\cdots\oplus B_{r}^{n_{r}}.
\]
As a result we have in $\KO(R)$:
\[
[A] =  [A']+ n_{1}[B_{1}]+\cdots+ n_{r}[B_{r}]
\]
and hence
\[
[A] - n_{1}[B_{1}]-\cdots- n_{r}[B_{r}] = [A']\in \KO(R)^{+}.
\]
This shows that $M([\ZA])\sbs\KO(R)^{+}$.

Conversely, let $B\in\FP_{R}$. In order to prove that $[B]\in M([\ZA])$ we proceed by induction on the successor ordinal $h(B)$. If $h(B) = 0$, that is $B=0$, then there is nothing to prove. Thus, let $\a$ be any ordinal, assume that $[B]\in M([\ZA])$ whenever $h(B)\le\a$ and let us consider any $[B]\in M([\ZA])$ with $h(B)=\a+1$. Then there are distinct $A_{1},\ldots,A_{r}\in\ZA$, with $h(A_{i})=\a+1$ for all $i$, and positive integers $n_{1},\ldots,n_{r}$ such that
\[
B/BL_{\a} \is (A_{1}/A_{1}L_{\a})^{n_{1}}\oplus\cdots \oplus(A_{r}/A_{r}L_{\a})^{n_{r}}.
\]
Thus, if we set $A = A_{1}^{n_{1}}\oplus\cdots\oplus A_{r}^{n_{r}}$, we have that $B/BL_{\a} \is A/AL_{\a}$ and there are decompositions
\[
B = B'\oplus B'', \qquad A = A'\oplus A''
\]
such that
\[
B'\is A',\quad B'' = B''L_{\a} \quad \text{and}\quad A'' = A''L_{\a}.
\]
As a result we have
\[
[B] = [B']+[B''] = [A']+[B''] = [A]-[A'']+[B''];
\]
since $B''\in M([\ZA])$ by the inductive hypothesis, the proof will be complete once we have proved that $[A]-[A'']\in M([\ZA])$ as well. Since $[A]\in M([\ZA])$, we may assume that $A'' \ne 0$. Inasmuch as $[\ZA]$ is a basis for $\KO(R)$, we can write
\begin{equation}\label{eq order-selective4}
[A'']+c_{1}[C_{1}]+\cdots+c_{s}[C_{s}] = d_{1}[D_{1}]+\cdots+d_{t}[D_{t}],
\end{equation}
where $C_{1},\ldots,C_{s},D_{1},\ldots,D_{t}$ are \underline{distinct} elements of $[\ZA]$ and $c_{1},\ldots,c_{s},d_{1},\ldots,d_{t}$ are non-negative integers. If all $d_{j}$'s are zero, then $[A]-[A'']$ is a linear combinations of elements of $[\ZA]$ with non-negative coefficients and, in this case, we already know that $[A]-[A'']\in M([\ZA])$. Thus we may assume that in \eqref{eq order-selective4} all $d_{j}$'s are positive.

\underline{We claim that}
\[
    [C_{i}],[D_{j}]\in \{<\{[A_{1}],\ldots,[A_{r}]\}\}
\]
for every $j\in\{1,\ldots,t\}$ and for every $i\in\{1,\ldots,s\}$ such that $c_{i}>0$.
To this purpose, let us first prove that if $c_{i}>0$, for some $i$, then there is some $j\in\{1,\ldots,t\}$ such that $[C_{i}]<[D_{j}]$. In fact, if we rewrite \eqref{eq order-selective4} as
\begin{equation}\label{eq order-selective6}
    A''\oplus C_{1}^{c_{1}}\oplus\cdots\oplus C_{s}^{c_{s}} \is D_{1}^{d_{1}}\oplus\cdots\oplus D_{t}^{d_{t}},
\end{equation}
we infer that the simple module $C_{i}/C_{i}L_{h(C_{i})-1}$ imbeds into some $D_{j}/D_{j}L_{h(C_{i})-1}$, therefore $C_{i}/C_{i}L_{h(C_{i})-1}\preccurlyeq D_{j}/D_{j}L_{h(D_{j})-1}$ by the order-selectivity of $D_{j}$ and hence $C_{i}\lesssim D_{j}$ by \eqref{eq order-selective}. As $C_{i}\ne D_{j}$, we infer that $[C_{i}]<[D_{j}]$, as wanted. In order to prove our claim, let us show that, given $j_{1}\in\{1,\ldots,t\}$, we have $D_{j_{1}}\lesssim A_{k}$ for some $k\in\{1,\ldots,r\}$. Assume that it is not the case. Then the simple module $D_{j_{1}}/D_{j_{1}}L_{h(D_{j_{1}})-1}$ cannot imbed into $A/AL_{h(D_{j_{1}})-1}$ and hence does not imbed into $A''/A''L_{h(D_{j_{1}})-1}$. It follows from \eqref{eq order-selective6} that there is some $i_{1}\in\{1,\ldots,s\}$ for which $c_{i_{1}}>0$ and $D_{j_{1}}/D_{j_{1}}L_{h(D_{j_{1}})-1}$ imbeds into $C_{i_{1}}/C_{i_{1}}L_{h(D_{j_{1}})-1}$ and hence, by order-selectivity of $C_{i_{1}}$, we have that $[D_{j_{1}}]<[C_{i_{1}}]$. In turn, as we have proved before, there is some $j_{2}\in\{1,\ldots,t\}$ such that $[C_{i_{1}}]<[D_{j_{2}}]$. We have that  $[D_{j_{1}}]<[C_{i_{1}}]<[D_{j_{2}}]$, therefore for no $k\in\{1,\ldots,r\}$ it is true that $D_{j_{2}}\lesssim A_{k}$, otherwise we would get that $D_{j_{1}}\lesssim C_{i_{1}}\lesssim D_{j_{2}}\lesssim A_{k}$ and hence $D_{j_{1}}\lesssim A_{k}$, contradicting our assumption. By repeating the same argument as before, we infer that there is some $i_{2}\in\{1,\ldots,s\}$ such that $[D_{j_{2}}]<[C_{i_{2}}]$ and we get that $[D_{j_{1}}]<[C_{i_{1}}]<[D_{j_{2}}]<[C_{i_{2}}]$. It is then possible to build, by recursion, two countable sequences $(C_{i_{1}},C_{i_{2}},\ldots)$ and $(D_{j_{1}},D_{j_{2}},\ldots)$ of elements from the sets $\{C_{1},\ldots,C_{s}\}$ and $\{D_{1},\ldots,D_{t}\}$ respectively in such a way that
\[
[D_{j_{1}}]<[C_{i_{1}}]<[D_{j_{2}}]<[C_{i_{2}}]<\cdots.
\]
Since the range of the sequences is finite, in the above chain there are necessarily repetitions; this is not compatible with the fact that the relation $\le$ in $\KO(R)$ is a partial order. Thus our claim is established.

We have now
\[
[A]-[A''] = n_{1}[A_{1}]+\cdots+ n_{r}[A_{r}]- d_{1}[D_{1}]-\cdots-d_{t}[D_{t}]+ c_{1}[C_{1}]+\cdots+c_{s}[C_{s}].
\]
Since the elements $[A_{1}],\ldots,[A_{r}]$ form an antichain in $\ZA$, by the above they are the maximal elements of the set \[
\{[A_{1}],\ldots,[A_{r}],[D_{1}],\ldots,[D_{t}]\},
\]
therefore $n_{1}[A_{1}]+\cdots+ n_{r}[A_{r}]- d_{1}[D_{1}]-\cdots-d_{t}[D_{t}]\in M([\ZA])$. Since $c_{1}[C_{1}]+\cdots+c_{s}[C_{s}]\in M([\ZA])$, we conclude that $[A]-[A'']\in M([\ZA])$ and the proof is complete.
\end{proof}

As we had remarked in the introduction, if $R$ is any right semiartinian ring, then the partially ordered set $\Prim_A$ is artinian; moreover $\Prim_A$ has a finite cofinal subset in case $R$ is very well behaved (see \cite[Proposition 2.7]{Bac:16}). Conversely, given an artinian partially ordered set $I$ and a field $D$, in \cite{Bac:16} a construction is presented which produces a semiartinian and unit-regular $D$-algebra $D_{I}$, together with a family $(\zu_{i})_{i\in I}$ of idempotents satisfying the following properties:
\begin{enumerate}
\item $\zu_{i}D_{I}$ is eventually simple for every $i\in I$.
\item $\zu_{i}D_{I}\lesssim\zu_{i}D_{I}$ if and only if $i\le j$.
\item By setting $U_{i} = \zu_{i}D_{I}/\zu_{i}\Soc_{h(\zu_{i})-1}(D_{I})$, the assignment $i\mapsto U_{i}$ defines an injective map from $I$ to $\Simp_{D_I}$ and $U_{i}\preccurlyeq U_{j}$ if and only if $i\le j$.
\item If $\{m_{1},\ldots,m_{r}\}$ is a finite cofinal subset of $I$, then $\{\zu_{m_{1}},\ldots,\zu_{m_{r}}\}$ is a complete set of pairwise orthogonal idempotents of $D_{I}$ (see \cite[Proposition 7.1]{Bac:16}).
\end{enumerate}

It turns out that $D_I$ is very well behaved if and only if $I$ has a finite cofinal subset and, if it is the case, then $\ZA = \{\zu_{i}D_{I}\mid i\in I\}$ is an order-representative subset of $\ESFP_{D_{I}}$ and the correspondences $i\leftrightarrow \zu_{i}D_{I}\leftrightarrow U_{i}\leftrightarrow r_{R}(U_{i})$ define isomorphisms of partially ordered sets (see \cite[Theorem 9.5]{Bac:16}). Thus, if we consider the injective map $\map{f}{I}{\KO(D_{I})^{+}}$ defined by $f(i) = [\zu_{i}D_{I}]$ for all $i\in I$, in view of Theorem \ref{theo mainKO} $f$ extends to an isomorphism $\map{\overline{f}}{G(I)}{\KO(D_{I})}$ of partially ordered groups. According to Proposition \ref{pro unperforated}, $m_{1}+\cdots+m_{r}$ is an order-unit in $G(I)$ and $\overline{f}(m_{1}+\cdots+m_{r}) = [\zu_{m_{1}}D_{I}\oplus\cdots\oplus \zu_{m_{r}}D_{I}] = [D_{I}]$. Thus we may conclude with the following result:

\begin{theo}
Assume that $I$ is an artinian partially ordered set with a finite cofinal subset $F$ and let $u$ be the sum of all elements of $F$ in $G(I)$. Then there is an order-isomorphism from $(G(I),u)$ to $(\KO(D_I), [D_I])$, which restricts to an isomorphism from $M(I)$ to $\CV(D_I)$.
\end{theo}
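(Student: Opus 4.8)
The plan is to assemble the ingredients collected in the paragraph preceding the statement, so that the theorem becomes a synthesis of Theorem~\ref{theo mainKO}, Proposition~\ref{pro unperforated}, and the properties (1)--(4) of the construction $D_{I}$ recalled from \cite{Bac:16}. First I would record that, since $I$ is artinian and possesses the finite cofinal subset $F = \{m_{1},\ldots,m_{r}\}$, the $D$-algebra $D_{I}$ is semiartinian and unit-regular, and it is moreover very well behaved precisely because $I$ has a finite cofinal subset. Consequently $\ZA = \{\zu_{i}D_{I}\mid i\in I\}$ is an order-representative subset of $\ESFP_{D_{I}}$ and, via the correspondences $i\leftrightarrow \zu_{i}D_{I}\leftrightarrow U_{i}\leftrightarrow r_{R}(U_{i})$, the poset $I$ is order isomorphic to $\Simp_{D_{I}}$ and hence to $\Prim_{D_{I}}$.

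Next I would construct the comparison map. Define $f\colon I\to\KO(D_{I})$ by $f(i) = [\zu_{i}D_{I}]$; since $D_{I}$ is unit-regular, $[\ZA]$ is a simp-basis for $\KO(D_{I})$ by Theorem~\ref{theo main}. The free abelian group $G(I)$ on $I$ then maps, by the unique group homomorphism $\overline{f}$ extending $f$, isomorphically onto $\KO(D_{I})$, the basis $I$ going bijectively onto the basis $[\ZA]$. The key order-theoretic input is Theorem~\ref{theo mainKO}, which gives $\KO(D_{I})^{+} = M([\ZA])$; since $\overline{f}$ carries the poset $I$ order-isomorphically onto $[\ZA]$, it carries the positive cone $M(I)$ onto $M([\ZA]) = \KO(D_{I})^{+}$, so that $\overline{f}$ is an isomorphism of partially ordered abelian groups.

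It then remains to identify the distinguished order unit and the monoid restriction. By Proposition~\ref{pro unperforated}, $u = m_{1}+\cdots+m_{r}$ is an order unit of $G(I)$; and property (4) gives $D_{I} = \zu_{m_{1}}D_{I}\oplus\cdots\oplus\zu_{m_{r}}D_{I}$, whence $\overline{f}(u) = \sum_{k}[\zu_{m_{k}}D_{I}] = [D_{I}]$, so that $\overline{f}$ is an order isomorphism from $(G(I),u)$ to $(\KO(D_{I}),[D_{I}])$. Finally, because $D_{I}$ is unit-regular we have $\overline{A} = [A]$ for all $A\in\FP_{D_{I}}$, so that $\CV(D_{I})$ coincides with the positive cone $\KO(D_{I})^{+} = M([\ZA])$; the restriction $\overline{f}|_{M(I)}\colon M(I)\to M([\ZA]) = \CV(D_{I})$ is therefore the asserted monoid isomorphism.

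I expect that essentially no step presents a genuine obstacle here, since the substantive work has already been carried out in Theorem~\ref{theo mainKO} and Proposition~\ref{pro unperforated}; the one point requiring a moment's care is the identification $\CV(D_{I}) = \KO(D_{I})^{+}$ afforded by unit-regularity, which is what guarantees that the restriction of $\overline{f}$ actually lands in $\CV(D_{I})$, together with the computation $\overline{f}(u) = [D_{I}]$ via the completeness of the orthogonal family $\{\zu_{m_{k}}\}$.
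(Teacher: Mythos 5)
Your proposal is correct and follows essentially the same route as the paper: the paper's proof is precisely the assembly, in the paragraph preceding the theorem, of Theorem~\ref{theo mainKO}, Proposition~\ref{pro unperforated}, and properties (1)--(4) of the construction $D_I$ from \cite{Bac:16}, with $\overline{f}$ defined exactly as you do and $\overline{f}(u) = [\zu_{m_1}D_I\oplus\cdots\oplus\zu_{m_r}D_I] = [D_I]$ computed via property (4). Your added remark that unit-regularity identifies $\CV(D_I)$ with $\KO(D_I)^{+}$, so that the restriction of $\overline{f}$ to $M(I)$ lands in $\CV(D_I)$, is a correct and welcome explicit justification of a step the paper leaves implicit.
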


\noindent\textbf{\emph{Acknowledgement}}. The authors wish to express their gratitude to Ken Goodearl, who noticed a crucial mistake in a preliminary version of this work. His counterexample gave a clear indication about the right direction to follow in order to correct that mistake. The correction, however, resulted in a complete rewriting of the third section.

\bibliographystyle{amsplain}

\end{document}